\begin{document}

\title[Universal D-modules and stacks of \'etale germs]{Universal $\CD$-modules and stacks of \'etale germs of $n$-dimensional varieties}
\author{Emily Cliff}
\address{Department of Mathematics, University of Illinois at Urbana--Champaign\\
Altgeld Hall, 1409 W Green Street\\
Urbana, Illinois, USA 61801}
\email{ecliff@illinois.edu}

\begin{abstract}
We introduce stacks classifying \'etale germs of pointed varieties of dimension $n$. We show that quasi-coherent sheaves on these stacks are universal $\mathcal{D}$- and $\mathcal{O}$-modules. We state and prove a relative version of Artin's approximation theorem, and as a consequence identify our stacks with classifying stacks of automorphism groups of the $n$-dimensional formal disc. We introduce the notion of convergent universal modules, and study them in terms of these stacks and the representation theory of the automorphism groups.

\keywords{Universal D-modules. Universal O-modules. Classifying stacks. Automorphisms of formal discs. Artin approximation.}

\subjclass{14D23, 14F10, 14B12.}
\end{abstract}


\maketitle

\tableofcontents

\section{Introduction}
\label{sec: introduction}

Vertex algebras and vertex operator algebras have been studied and applied fruitfully in a number of areas, ranging from physical disciplines such as conformal field theory and string theory to finite group theory and the geometric Langlands correspondence. Beilinson and Drinfeld \cite{BD1} reformulated the axioms of a vertex algebra in geometric language in terms of chiral algebras, and showed that these are equivalent to factorisation algebras---both geometric objects which take the form of $\CD$-modules over a complex curve, equipped with additional structure. The sophisticated machinery of factorisation and chiral algebras elegantly captures the data of vertex algebras in an often more intuitive way---for example, Francis and Gaitsgory \cite{FG} showed that Beilinson--Drinfeld's definitions can be extended to higher dimensions in a natural way, whereas the vertex algebra picture is much less clear (see for example Borcherds \cite{B2}). 

In the one-dimensional setting, Frenkel and Ben-Zvi \cite{FBZ2} explain the relationship between vertex algebras and chiral algebras over curves. To make this relationship precise, we need adjectives on both sides. First, we require our vertex algebras to be \emph{quasi-conformal}, or equipped with a one-dimensional infinitesimal translation. On the other hand, the chiral algebras we obtain are \emph{universal}: they are defined over all smooth families of curves, and are compatible with pullback by \'etale morphisms of these families. Roughly, the infinitesimal translation allows us to spread the vector space underlying the vertex algebra canonically along any complex curve $C$. In this way, we obtain a $\CD$-module on $C$ which will have the structure of a chiral algebra. The fact that this procedure works for any smooth curve $C$ means that we obtain a universal chiral algebra.

Motivated by this fact, in this paper we study categories of universal $\CD$-modules and universal $\CO$-modules of dimension $n$. Inspired by a claim of Beilinson and Drinfeld \cite{BD1}, we relate these categories to categories of representations of groups of automorphisms of the $n$-dimensional formal disc. A universal $\CD$-module is a rule assigning to each smooth $n$-dimensional variety a $\CD$-module in a way compatible with pullback by \'etale morphisms between the varieties. A key observation is that all of this data is equivalent to the data of a single sheaf on a stack parametrising \'etale germs of $n$-dimensional varieties. 

A second critical observation is that, using a generalisation of Artin's ap\-prox\-i\-ma\-tion theorem to the relative setting, we can relate this stack to the clas\-si\-fy\-ing stack of the group $G$ of automorphisms of the $n$-dimensional for\-mal disc. More precisely, in the case of universal $\CD$-modules, our stack is equiv\-a\-lent to the classifying stack of the group $G^\et$ of automorphisms \emph{of \'etale type}, which is a dense subgroup of $G$. It follows that a universal $\CD$-module is equivalent to a representation of $G^\et$; furthermore, any rep\-re\-sen\-ta\-tion of $G$ restricts to give a representation of $G^\et$ and hence a universal $\CD$-module. 

The difference between $\CD$-modules and $\CO$-modules amounts to an action by in\-fi\-ni\-tes\-i\-mal translations, present only in the case of $\CD$-modules; in the case of the stacks in this paper, this difference is manifested in the automorphism groups as follows: the group corresponding to $\CO$-modules contains only those automorphisms of the formal disc preserving the origin, while in the case of $\CD$-modules infinitesimal translations of the origin are permitted.  

A natural question to ask is whether we can characterise those universal $\CD$-modules which come from representations of $G$ rather than representations of just the subgroup $G^\et$. We give two characterisations of these universal $\CD$-modules, which we call \emph{convergent universal $\CD$-modules}. However, at the time of writing, we do not know whether all universal $\CD$-modules are actually convergent. This question is equivalent to the question of whether all representations of $G^\et$ extend uniquely to representations of $G$. We are able to show that if an extension exists, it is unique. Furthermore, any finite-dimensional representation of $G^\et$ extends to a representation of $G$, and in fact this is true of any representation of $G^\et$ satisfying a weaker finiteness condition which we call being \emph{$K^\et$-locally-finite}. All representations of $G$ satisfy this condition, and so the question is reduced to the existence of representations of $G^\et$ which are not $K^\et$-locally-finite. If such a representation exists, it will give rise to a universal $\CD$-module which is not convergent; on the other hand, it seems that such an object would be unlikely to arise in ordinary applications of the theory and would be unpleasant to work with. In other words, we are only interested in working with $\CD$-modules satisfying the properties implied by convergence, and we suggest that the category of convergent universal $\CD$-modules is the correct category in which to work.  

An intended application of this theory is the following. As in Francis--Gaitsgory \cite{FG}, we know that chiral algebras over a variety $X$ are certain Lie algebra objects in the category of $\CD$-modules on the Ran space of $X$, which is equipped with a monoidal structure called the \emph{chiral} monoidal structure. In the universal setting, we can introduce Ran versions of the stack of \'etale germs and the automorphism groups $G$ and $G^\et$, and define chiral monoidal structures on the associated categories of quasi-coherent sheaves. We should then interpret the monoidal structure in terms of the classifying stack in order to obtain higher-dimensional analogues of vertex algebras as Lie algebra objects in the representation category with this chiral monoidal structure. This will be explored in future work. 

\setcounter{subsection}{0}
\subsection{The key players}
\label{subsec: the key players}
Let us now introduce the main players of this paper, which fall into three classes: we have two flavours of stacks---those corresponding to classifying stacks, and those corresponding to stacks of germs of varieties---and in addition, we have the categories of universal modules. 

Let $G$ denote the group formal scheme of automorphisms of the formal disc. It has a pro-structure, since it can be viewed as the limit of its quotients $G^{(c)}$, which are (in an imprecise sense) the automorphism groups of the $c$th infinitesimal neighbourhood of a point in an $n$-dimensional variety. The classifying stacks of interest will be those corresponding to these groups; quasi-coherent sheaves on these classifying stacks correspond to representations of the associated group. There is a subgroup $G^\et$ of $G$, of automorphisms of \emph{\'etale type}; it is closely related to the stacks of \'etale germs that we will define later. We will see that this subgroup is dense in $G$, so that the representation theory of the two groups is very similar. More specifically, placing a finiteness condition on their representations yields equivalent categories of representations.

The second flavour of stacks are those parametrising \'etale germs of $n$-di\-men\-sion\-al varieties---that is, we are interested in pointed $n$-dimensional varieties with morphisms given by roofs of \'etale morphisms, or \emph{common \'etale neighbourhoods}:
\begin{center}
\begin{tikzpicture}[>=angle 90]
\matrix(c)[matrix of math nodes, row sep=2em, column sep=2em, text height=1.5ex, text depth=0.25ex]
{    &  (V,v)  & \\
 (X_1,x_1) && (X_2,x_2). \\};
 
\path[->, font=\scriptsize]
(c-1-2) edge (c-2-1)
        edge (c-2-3);
\end{tikzpicture}
\end{center}
Imposing different equivalence relations on these classes of morphisms allows us to define the different versions of the stack that we will need, corresponding to the various classifying stacks mentioned above. The equivalence relations are defined by identifying common \'etale neighbourhoods which give rise to the same isomorphisms of the formal completions $\widehat{X_1} \EquivTo \widehat{X_2}$ or of the $c$th infinitesimal neighbourhoods for $c \in \BN$. We denote these stacks by $\varieties$, $\varietiesc{c}$, and $\varietiesc{\infty}$.

Finally, we consider the category $\univcatD$ of universal $\CD$-modules, as introduced by Beilinson and Drinfeld \cite{BD1}. These are families of $\CD$-modules on $n$-dimensional varieties, compatible with pullback along \'etale morphisms. We impose an additional requirement, that these compatibilities be themselves compatible with identifications of \'etale morphisms giving rise to the same morphisms of infinitesimal neighbourhoods. This allows us to define subcategories $\univcatDc{c}$, and finally the category $\univcatDconv$ of \emph{convergent universal $\CD$-modules}, which are the kind of modules arising from vertex algebras. 

Extending the finiteness condition on representations alluded to above allows us to define analogous conditions on the categories of quasi-coherent sheaves, and finally to give a characterisation of convergent universal $\CD$-modules as those universal $\CD$-modules which are of \emph{ind-finite type}.

The relationship between these objects is the main focus of this paper; the results are summarised in Figure 1.\footnote{The reader may wish to pull out the additional copy of the diagram included in Appendix \ref{appendix: main diagram} so that he can refer back to it easily.}
 
\begin{figure}
\caption{The main diagram: this paper in one page.}
\centering
\includegraphics[angle=90,origin=c, height=4.8in]{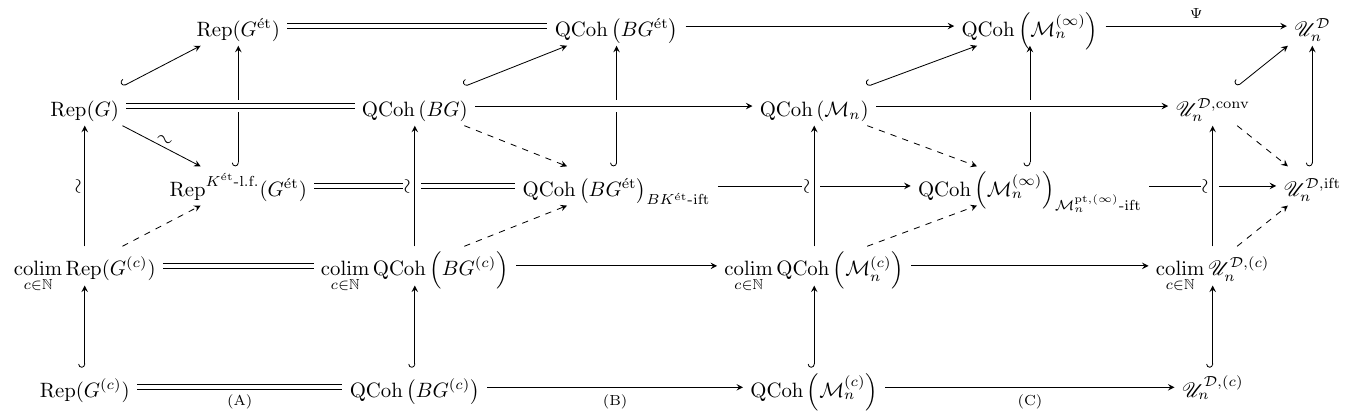}
\end{figure}

This diagram corresponds to the setting of universal $\CD$-modules; we have an entirely analogous diagram to describe the setting of universal $\CO$-modules. The key differences are the following:
\begin{enumerate}
\item We replace the group $G$ of automorphisms of the formal disc by its reduced subgroup $K= G_\red$: this is the group of automorphisms which preserve the origin of the disc. By contrast, $G$ includes automorphisms which may involve an infinitesimal translation of the origin. These infinitesimal translations correspond to the action of the sheaves of differential operators on the corresponding universal $\CD$-modules, not present in the case of universal $\CO$-modules.
\item We replace the stacks of \'etale germs of $n$-dimensional varieties by stacks with the same objects but with fewer isomorphisms: in this case we only allow isomorphisms which fix the distinguished points of our pointed varieties, whereas the original stacks permitted isomorphisms which could shift these points infinitesimally. We will denote these stacks by adding a superscript $\bullet^\pt$ to the symbol denoting the corresponding stack for the $\CD$-module setting (e.g. $\pointedvarieties$).
\item Rather than considering universal families of $\CD$-modules, we consider universal families of $\CO$-modules. We denote these categories by $\univcat$, etc.
\item The finiteness conditions in the bottom part of the back row are simpler. In fact, these finiteness conditions are most naturally defined in the $\CO$-module setting; the corresponding conditions in the $\CD$-module setting are then defined by requiring the objects to be suitably finite when regarded as objects in the $\CO$-module setting after applying a forgetful functor.
\end{enumerate}

We will give the full definitions of the stacks and categories for both the $\CD$- and $\CO$- module settings, but for the proofs of the equivalences we will mainly focus on the story of universal $\CD$-modules. This is the setting needed for working with universal chiral algebras. Moreover, generally the proofs in the $\CO$-module setting are just simpler versions of the $\CD$-module proofs. The exception is in the study of the categories of representations; there we will see that it is first necessary to study the groups $K$ and $K^\et$, and then to extend our results to $G$ and $G^\et$. 

\subsection{The structure of the paper}
\label{subsec: structure of the paper}
We will begin in Section \ref{sec: stacks of etale germs} by discussing and defining the stacks $\varietiesc{\infty}$ and $\pointedvarietiesc{\infty}$ of unpointed and pointed \'etale germs, as well as the necessary variations. In Section \ref{sec: classifying stacks} we introduce the automorphism groups $G$ and $K$, as well as their quotients $G^{(c)}$ and $K^{(c)}$. We will see that there is a natural map $F$ from our stack $\varietiesc{\infty}$ to the classifying stack $BG$, and analogues $F^\cth$ for the quotients $G^\cth$. Pullback along these maps gives rise to the functors in column (B) of the main diagram. In Section \ref{sec: relative artin approximation}, we state and prove a generalisation of Artin's approximation theorem \cite{A} to the relative setting, and show as a corollary that the map $F^\cth$ is an isomorphism of stacks. It follows that the functors in the front part of column (B) are equivalences. 

In Section \ref{sec: groups of etale automorphisms}, we introduce the group-valued prestack $G^\et$ of \'etale-type automorphisms of the formal disc; it will be immediate from the relative Artin approximation theorem and the definition of $G^\et$ that the classifying stack $BG^\et$ is equivalent to the stack $\varietiesc{\infty}$ of \'etale germs of $n$-dimensional varieties. We will then study the representation theory of $G^\et$, and identify representations of $G$ as the subcategory of $\Rep(G^\et)$ of \emph{$K^\et$-locally-finite} representations. In Section \ref{sec: universal modules} we define the categories of universal $\CD$- and $\CO$-modules, as well as the functor $\Psi$ in the back part of column (C). We then prove that this functor is an equivalence. 

In Section \ref{sec: convergent and ind-finite universal modules}, we define the categories $\univcatDc{c}$ and $\univcatDconv$ of $c$th-order and convergent universal $\CD$-modules, and we prove that the functor $\Psi$ restricts to give the equivalences of the front part of column (C). We also characterise convergent universal $\CD$-modules as those universal $\CD$-modules which are \emph{locally finite} in the sense analogous to that in the study of $\Rep(G) \emb \Rep(G^\et)$---we shall call these universal $\CD$-modules of \emph{ind-finite type} to avoid confusion with the standard use of the word ``local'' in sheaf theory. In the final section, we discuss the extension of these definitions and results to the setting of $\infty$-categories. 

Combining our results, we obtain the following equivalences of categories:
\begin{center}
\begin{small}
\begin{tikzpicture}[>=angle 90,bij/.style={above,sloped,inner sep=0.5pt}]
\matrix(a)[matrix of math nodes, row sep=1em, column sep=1.3em, text height=1.5ex, text depth=0.25ex]
{       & \QCoh{\pointedvarieties}&          &         & \QCoh{\varieties}  & \\
 \Rep(K) &                        & \univcatOconv & \Rep(G) &           & \univcatDconv. \\};
\path[->]
 (a-1-2) edge node[bij]{$\sim$} (a-2-1)
         edge node[bij]{$\sim$} (a-2-3)
 (a-1-5) edge node[bij]{$\sim$} (a-2-4)
         edge node[bij]{$\sim$} (a-2-6);
\end{tikzpicture}
\end{small}
\end{center}

That is, we have proved a variation of the theorem suggested by Beilinson and Drinfeld [Proposition and Exercise 2.9.9, \cite{BD1}]:
\begin{thm}
We have the following equivalences of categories:
\begin{align*}
\univcatOconv \simeq \Rep(K)\\
\univcatDconv \simeq \Rep(G).
\end{align*}
\end{thm}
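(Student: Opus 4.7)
The statement is a compilation theorem: it records what we obtain by concatenating the equivalences established in Sections~\ref{sec: stacks of etale germs} through \ref{sec: convergent and ind-finite universal modules}. The plan, therefore, is not to prove anything new here but to assemble the pieces indicated in the main diagram and verify that the chain composes. I would focus on the $\CD$-module side; the $\CO$-module version is handled word-for-word, replacing $G$ with $K$, $G^{\et}$ with $K^{\et}$, and $\varieties$ with $\pointedvarieties$ throughout.

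For the equivalence $\univcatDconv \simeq \Rep(G)$, I would compose the following four equivalences, all of which are available by the time we reach this point. First, from Section~\ref{sec: universal modules} (extended in Section~\ref{sec: convergent and ind-finite universal modules}), the functor $\Psi$ gives an equivalence $\QCoh{\varieties} \simeq \univcatD$, under which the subcategory of ind-finite type objects on the sheaf side matches the convergent universal $\CD$-modules on the module side. Second, from Section~\ref{sec: groups of etale automorphisms}, the relative Artin approximation theorem of Section~\ref{sec: relative artin approximation} combined with the definition of $G^{\et}$ yields an isomorphism of stacks $\varieties \simeq BG^{\et}$, hence an identification $\QCoh{\varieties} \simeq \Rep(G^{\et})$. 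Third, again from Section~\ref{sec: groups of etale automorphisms}, the full subcategory of $\Rep(G^{\et})$ consisting of $K^{\et}$-locally-finite representations is equivalent to $\Rep(G)$, via restriction in one direction and the unique extension in the other. Fourth, one checks that under the identification $\QCoh{\varieties} \simeq \Rep(G^{\et})$, the ind-finite type condition on the sheaf side corresponds exactly to the $K^{\et}$-locally-finite condition on the representation side; this is essentially the content of the characterisation of convergent modules in Section~\ref{sec: convergent and ind-finite universal modules}. Chaining these gives $\univcatDconv \simeq \Rep(G)$.

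Written as a single diagram, the proof amounts to the commutative square
\[
\begin{array}{ccc}
\univcatDconv & \stackrel{\Psi^{-1}}{\longrightarrow} & \QCoh{\varieties}^{\text{ind-fin}} \\
\downarrow & & \downarrow \\
\Rep(G) & \hookrightarrow & \Rep(G^{\et})^{K^{\et}\text{-loc-fin}}
\end{array}
\]
in which every arrow has been shown to be an equivalence, so that the left vertical arrow is determined as a composite. The analogous square with $K,\ K^{\et},\ \pointedvarieties$, and $\univcatOconv$ supplies the second equivalence.

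The only non-formal input is verifying the compatibility of the two finiteness conditions under the identification $\QCoh{\varieties} \simeq \Rep(G^{\et})$, and this is the step I expect to require the most care: one must trace through how the action of $K^{\et}$ on a representation corresponds, via $\Psi$ and the isomorphism $\varieties \simeq BG^{\et}$, to the action on jets of sections that defines the ind-finite type condition, and confirm that local finiteness on one side is exactly ind-finiteness on the other. Once this matching is in hand, the theorem follows by composition of equivalences, with no further argument needed.
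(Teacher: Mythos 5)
Your outer links are correct and are indeed established in the paper: $\varietiesc{\infty}\simeq BG^\et$ gives $\QCoh{\varietiesc{\infty}}\simeq\Rep(G^\et)$, and Corollary \ref{cor: restriction is an embedding for G} gives $\Rep(G)\simeq\Rep^{K^\et\text{-}\lf}(G^\et)$ (likewise Corollary \ref{cor: restriction is an embedding for K} on the $K$-side). The gap is the top arrow of your square, the claim that $\Psi$ matches $\univcatDconv$ with the ind-finite-type (equivalently $K^\et$-locally-finite) subcategory. You defer this to ``the characterisation of convergent modules in Section \ref{sec: convergent and ind-finite universal modules}'', but in the paper that characterisation (convergent $\Leftrightarrow$ ind-finite type) is \emph{deduced from} the theorem you are trying to prove, by comparing the two equivalences $\Rep(G)\simeq\univcatDconv$ and $\Rep(G)\simeq\univcatDif$; so as written your argument is circular. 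Nor is the matching a routine check: $\univcatDconv$ is defined as $\colim_{c\in\BN}\univcatDc{c}$, where the $c$th-order condition says the maps $\overline{\sigma}^*\SF(f)$ depend only on the $c$-jet of $f$, while ind-finiteness asks for an exhaustion by finite-type $\CO$-submodules. Comparing these directly requires, in one direction, that representations of the algebraic quotients $K^\cth$, $G^\cth$ are locally finite, and in the other, the extension theorem for finite-dimensional $K^\et$-representations (the $\BG_m$-grading and polynomial-monoid argument of Theorem \ref{theorem: K extensions}) to produce a finite $c$ through which the structure maps factor — i.e. essentially the full content of Sections \ref{sec: relative artin approximation}--\ref{sec: groups of etale automorphisms} again, none of which you sketch.

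The paper's own proof of this statement bypasses $G^\et$ and ind-finiteness entirely: it uses the finite-level output of relative Artin approximation, $\varietiesc{c}\simeq BG^\cth$ and $\pointedvarietiesc{c}\simeq BK^\cth$ (Theorem \ref{thm: isomorphisms for order c stacks}), shows that $\Psi$ restricts to equivalences $\QCoh{\prestackVbc{c}}\simeq\univcatDc{c}$, and then takes the colimit over $c$, invoking Proposition \ref{prop: representations of G are continuous} (and Proposition \ref{prop: representations of pro-algebraic groups are continuous} for $K$) to identify $\Rep(G)\simeq\colim_c\Rep(G^\cth)\simeq\colim_c\univcatDc{c}=\univcatDconv$. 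If you supply those finite-$c$ ingredients your square does commute, but then the detour through $K^\et$-local-finiteness is no longer needed for this theorem — it instead yields the separate corollary that convergent equals ind-finite. A minor point in addition: it is $\varietiesc{\infty}$, not $\varieties=\lim_c\varietiesc{c}$, that is equivalent to $BG^\et$; in the paper's notation $\varieties$ is the stack of formal germs, related to $BG$, and conflating the two obscures exactly the distinction on which the theorem turns.
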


Composing the functors in the main diagram, we obtain a functor
\begin{align*}
\Rep(G) \to \univcatD.
\end{align*}
This functor agrees with the functor (2.9.9.1) of \cite{BD1}. It follows from our results that this functor is a fully faithful embedding, but it is not clear that it is essentially surjective. If it is not, then not all universal $\CD$-modules are convergent; we argue that in that case the category of convergent universal $\CD$-modules should be the preferred setting.  

Note that the statement of Proposition 2.9.9 \cite{BD1} is also discussed in a more restricted setting by Jordan and Orem (see Section 4 of \cite{JO}).

\subsection{Conventions and notation}
We fix $k = \overline{k}$, an algebraically closed field of characteristic zero. By $\Sch$, we will always mean the category of schemes over $k$. By $\PreStk$, we mean the category of functors
\begin{align*}
\left(\Sch^{\Aff}\right)^\op \to \infty\text{-}\Grpd.
\end{align*}

However, the examples of prestacks discussed in this paper take values in $\Grpd \subset \infty\text{-}\Grpd$. In fact, throughout most of the paper, we work with ordinary categories of sheaves, $\CD$-modules, and representations, rather than DG- or $\infty$-categories. Our categories are cocomplete (i.e. closed under colimits); in particular, colimits of categories are always taken as cocomplete categories. 

In the final section, we will show that our results extend to the setting of DG-categories after all, when we choose appropriate definitions for DG categories of representations, universal modules, etc. Even though this is the setting in which these results are most likely to find applications, we have chosen to present the arguments first for the non-infinity cases for concreteness (in particular when working with representations of groups and group-valued prestacks). 

Note that each of the stacks appearing in the main diagram can be defined by giving a prestack parametrising only the \emph{trivial} objects; then we take the stackification of the prestack to obtain the stacks in our diagram. (For example, the classifying stack of a group is the stackification of the prestack classifying only the trivial principal bundles; similarly in the third column of the diagram we can consider prestacks classifying the ``trivial'' pointed $n$-dimensional variety, $(\A{n}, 0)$.) These prestacks will be denoted by adding the subscript $\bullet_\triv$ to the symbol for the corresponding stack. In the case of the stacks of germs of varieties, it will also be convenient to consider an intermediate prestack, which has more objects and automorphisms than the trivial version of the prestack, but which still has the same stackification; this prestack will be denoted by decorating the symbol for the corresponding stack with $\widetilde{\bullet}$. Since taking quasi-coherent sheaves is independent of stackification, it does not matter which of these related prestacks we use in the above diagram.

\subsection{Acknowledgements}
I owe many thanks to Dominic Joyce and Dennis Gaitsgory for pointing out an error in an earlier version of this work, and to Pavel Safronov and David Ben-Zvi for other helpful comments on early drafts and discussions about these ideas. I thank Sasha Beilinson for a discussion pointing me in the direction of the ideas of Section \ref{sec: groups of etale automorphisms}, and Minhyong Kim for useful conversations about Artin's approximation theorem, and about representations of pro-unipotent groups. I am especially grateful to Kevin McGerty for many helpful discussions, but most of all to my advisor, Kobi Kremnizer, for his guidance, patience, and optimism. Finally, I thank the referee for carefully reading the paper and providing detailed comments. 

This work was supported by the Natural Sciences and Engineering Research Council of Canada, and by a scholarship from the Winston Churchill Society of Edmonton, Canada. 
 
\section{Stacks of \'etale germs}
\label{sec: stacks of etale germs}

We fix a natural number $n$, and are interested in studying smooth pointed varieties of dimension $n$ up to \'etale morphism. We will define the stack $\varietiesc{\infty}$ classifying families of such varieties---in fact, we will first introduce the prestack $\prestackVc{\infty}$ classifying \emph{trivial} $n$-dimensional pointed families, and then will define $\varietiesc{\infty}$ to be its stackification. We will also introduce an intermediate prestack $\prestackVbc{\infty}$, which has $\varietiesc{\infty}$ as its stackification as well, but which is somewhat more manageable. 

We begin in \ref{subsec: families of pointed varieties} with some preliminary definitions on $n$-dimensional families of varieties and common \'etale neighbourhoods between them. In \ref{subsec: groupoids of common etale neighbourhoods} we discuss equivalence relations which can be imposed on common \'etale neighbourhoods so that they form the morphisms of a groupoid, and in \ref{subsec: stacks of etale germs} we use these ideas to define stacks of $c$th-order and \'etale germs of $n$-dimensional varieties. In \ref{subsec: quasi-coherent sheaves} we consider the categories of quasi-coherent sheaves on these stacks. All of this material is related to the setting of universal $\CD$-modules, but we conclude in \ref{subsec: strict analogues} with some remarks about the \emph{strict} analogues of these definitions, which will be necessary for the setting of universal $\CO$-modules. 

\subsection{Families of pointed varieties and common \'etale neighbourhoods}
\label{subsec: families of pointed varieties}

\begin{defn}
Given two smooth families $\pi_i: X_i \to S_i$ a \emph{fibrewise \'etale} morphism between them is given by a commutative diagram
\begin{center}
\begin{tikzpicture}[>=angle 90]
\matrix(a)[matrix of math nodes, row sep=3em, column sep=3em, text height=1.5ex, text depth=0.25ex]
{X_1 & X_2 \\
S_1 & S_2\\};
\path[->, font=\scriptsize]
(a-1-1) edge node[above]{$f_X$} (a-1-2)
        edge node[left]{$\pi_1$} (a-2-1)
(a-1-2) edge node[right]{$\pi_2$} (a-2-2)
(a-2-1) edge node[below]{$f_S$} (a-2-2);
\end{tikzpicture}
\end{center}
such that for any $s \in S_1$ with $s^\prime\defeq f_S(s) \in S_2$, the induced morphism on fibres $\left(X_1\right)_{s} \to \left(X_2\right)_{s^\prime}$ is \'etale.
\end{defn} 

\begin{notation}
We will often use the subscripts $X$ and $S$ to distinguish between the two maps comprising a fibrewise \'etale morphism, even when neither of the smooth families involved is $X/S$.
\end{notation}

We are interested in pointed $n$-dimensional varieties; in the relative setting this is formalised as follows:
\begin{defn} \label{pointed family}
Fix a base scheme $S \in \Sch$. A \emph{pointed $n$-dimensional family} over $S$ is a scheme $X$ equipped with 
\begin{itemize}
\item a morphism $\pi: X \to S$, smooth of relative dimension $n$; and
\item a section $\sigma: S \to X$.
\end{itemize}
\end{defn}
\begin{notation}
We shall denote such a family by $\pi: X \rightleftarrows S: \sigma$, but will often abbreviate to $(\pi, \sigma)$ or $X \rightleftarrows S$ when there is no risk of confusion.
\end{notation}

A particular $n$-dimensional family which will be of special importance to us is the \emph{trivial} $n$-dimensional family $\A{n}_S = S \times \A{n}$ over $S$. We will often work with the \emph{zero section} $z: S \to S \times \A{n}$, induced by the inclusion of the origin in $\A{n}$. Whenever we write $S \times \A{n} \rightleftarrows S$ without specifying the maps, we will always mean the canonical projection and the zero section.

Another important pointed $n$-dimensional family is the following: let $X \to S$ be any smooth family of relative dimension $n$, and consider the pointed family
\begin{align}\label{universal family over X}
\pr_1: X \times_S X \rightleftarrows X: \Delta.
\end{align}
We think of this as the universal pointed family over $X$. Whenever we write $X\times_S X \rightleftarrows X$ without specifying the maps, we will always mean the projection onto the first factor and the diagonal embedding. 

We want to define a groupoid of pointed $n$-dimensional families over a fixed base scheme $S$ up to \'etale morphism. When $S=\Spec k$ is a point, we have the notion of a common \'etale neighbourhood of pointed varieties, which plays the role of isomorphism in the groupoid, and we generalise this notion to the relative setting as follows:

\begin{defn} \label{common etale neighbourhood}
Let $\pi_i:X_i \rightleftarrows S: \sigma_i\ (i=1,2)$ be smooth $n$-dimensional families over $S$. A \emph{common \'etale neighbourhood} is given by a third pointed $n$-dimensional family $(\rho: V \rightleftarrows S: \tau)$ together with a pair of \'etale maps $(\phi: V \to X_1, \psi: V \to X_2)$ such that $\phi$ and $\psi$ are compatible with the projections, and furthermore are compatible with the sections \emph{on the level of reduced schemes}. That is, we require
\begin{enumerate}
\item $\pi_1 \circ \phi = \rho = \pi_2 \circ \psi$
\item $\sigma_1 \circ \redEmb{S} = \phi \circ \tau \circ \redEmb{S}, \qquad \sigma_2 \circ \redEmb{S} = \psi \circ \tau \circ \redEmb{S}$,
\end{enumerate}
where $\redEmb{S}: S_\red \emb S$ denotes the canonical closed embedding. Diagrammatically, we depict this common \'etale neighbourhood as follows, where the section $\tau$ is denoted by a dotted line to remind us that it is only compatible with the sections $\sigma_i$ on the reduced part of $S$:
\begin{center}
\begin{tikzpicture}[>=angle 90]\label{unpointed common etale neighbourhood diagram}
\matrix(c)[matrix of math nodes, row sep=2em, column sep=2em, text height=1.5ex, text depth=0.25ex]
{    &  V  & \\
 X_1 && X_2 \\
  & S. & \\};
\path[->, font=\scriptsize]
(c-1-2) edge node[above left]{$\phi$} (c-2-1)
(c-1-2) edge node[above right]{$\psi$} (c-2-3)
(c-2-1) edge[bend left=10] (c-3-2)
(c-3-2) edge[bend left=15] (c-2-1)
(c-2-3) edge[bend right=10] (c-3-2)
(c-3-2) edge[bend right=15] (c-2-3)
(c-1-2) edge[bend left=10] (c-3-2);
\path[densely dotted, ->]
(c-3-2) edge[bend left=10] (c-1-2);
\end{tikzpicture}
\end{center}
\end{defn}

\begin{notation}
We will denote a common \'etale neighbourhood by $(V, \phi, \psi)$; when no confusion will result, we may use the notation $(\phi, \psi)$ or simply $V$. 
\end{notation}

\begin{defn}\label{strict common etale neighbourhood}
In the case that the diagram is actually commutative, and not just up to precomposing with $\redEmb{S}$, we will say that the common \'etale neighbourhood is \emph{strict}, and will use a solid rather than a dotted line for the section $S \to V$.
\end{defn}

\begin{rmk}\label{locally defined morphisms}
We can also introduce another variation on the definition of common \'etale neighbourhood: rather than requiring the middle family to live over $S$, we allow smooth families over any scheme $T$ equipped with an \'etale morphism to $S$ which is compatible with the projections. There are both strict and non-strict versions of such \emph{\'etale-locally-defined common \'etale neighbourhoods}.
\end{rmk}

\subsection{Groupoids of common \'etale neighbourhoods}
\label{subsec: groupoids of common etale neighbourhoods}

Our goal is to define a groupoid $\prestackVbc{\infty}(S)$ for each scheme $S$, whose objects are pointed $n$-dimensional families over $S$, and whose morphisms are represented by common \'etale neighbourhoods. In order to do this, we need to impose an equivalence relation on common \'etale neighbourhoods, so that the composition of morphisms is well-defined and associative, and the morphisms are invertible. 

Moreover, we expect that restricting a common \'etale neighbourhood by pulling back along another \'etale morphism should not change the corresponding morphism in our groupoid. More formally, let $(V, \phi, \psi)$ be a common \'etale neighbourhood between $X_1 \rightleftarrows S$ and $X_2 \rightleftarrows S$, and suppose that we have a pointed $n$-dimensional $S$-family $V^\prime$ \'etale over $V$:
\begin{center}
\begin{tikzpicture}[>=angle 90]
\matrix(a)[matrix of math nodes, row sep=3em, column sep=3em, text height=1.5ex, text depth=0.25ex]
{V^\prime & V\\
S         & S.\\};
\path[->, font=\scriptsize]
(a-1-1) edge node[above]{$f_X$} (a-1-2)
        edge[bend left=10] node[right]{$\rho^\prime$} (a-2-1)
(a-2-1) edge[bend left=10] node[left]{$\tau^\prime$} (a-1-1)
(a-1-2) edge[bend left=10] node[right]{$\rho$} (a-2-2)
(a-2-2) edge[bend left=10] node[left]{$\tau$} (a-1-2);
\path[-]
(a-2-1) edge[double,double distance=2pt] (a-2-2);
\end{tikzpicture}
\end{center}
Then this yields a second common \'etale neighbourhood $(V^\prime, \phi \circ f_X, \psi \circ f_X)$, which we would like to treat as the same morphism in our groupoid. 

Motivated by this, we introduce the following equivalence relation:
\begin{defn}\label{similarity}
We will say that two common \'etale neighbourhoods 
\begin{align*}
(V_i, \phi_i, \psi_i), \quad i=1,2
\end{align*}
between $X_1 \rightleftarrows S$ and $X_2 \rightleftarrows S$ are \emph{similar} if there exists a pointed family $W \rightleftarrows S$ and \'etale maps $f_i: W/S \to V_i/S$ compatible with the sections on the level of $S_\red$, such that
\begin{align*}
\phi_1 \circ f_1 = \phi_2 \circ f_2;\\
\psi_1 \circ f_1 = \psi_2 \circ f_2.
\end{align*}
\end{defn}

This is an equivalence relation, but it is slightly too restrictive for our purposes. We modify it as follows:
\begin{defn}\label{infinity equivalence relation}
We will say that two common \'etale neighbourhoods 
\begin{align*}
(V_i, \phi_i, \psi_i), \quad i=1,2
\end{align*}
are \emph{$(\infty)$-equivalent} if for each $s \in S$ there is a Zariski open neighbourhood $S^\prime$ of $s$ such that the restrictions of $(V_i, \phi_i, \psi_i)$ to $S^\prime$ give similar common \'etale neighbourhoods between $X_1 \times_S S^\prime \rightleftarrows S^\prime$ and $X_2 \times_S S^\prime \rightleftarrows S^\prime$. 
\end{defn}

This equivalence relation is exactly what we need to define a groupoid structure. Given two common \'etale neighbourhoods 
\begin{align*}
(V_i/S, \phi_i, \psi_i),\ i=1,2,
\end{align*}
representing morphisms $X_1/S \to X_2/S \to X_3/S$, we would like to define their composition using the fibre product $V_1 \times_{X_2} V_2$, but it requires a little care to show that this is well-defined. It is clear that this is a smooth scheme of relative dimension $n$ over $S$, but what is not immediate is the existence of a suitable section. However, we can define a map $S_\red \to V_1 \times_{X_2} V_2$ using the compatibility of the sections $\tau_1 \circ \redEmb{S}$ and $\tau_2 \circ \redEmb{S}$; this map then extends to the desired section using formal smoothness of $V_1 \times_{X_2} V_2 \to S$. Although the choice of extension is not unique, any two choices differ only up to nilpotence, so the resulting common \'etale neighbourhoods will be equivalent. Therefore, the composition of the morphisms $X_1/S \to X_2/S \to X_3/S$ is indeed represented by the pullback:
\begin{center}
\begin{tikzpicture}
[>=angle 90, 
cross line/.style={preaction={draw=white, -, line width=4pt}}]
\matrix(g)[matrix of math nodes, row sep=1.5em, column sep=3em, text height=1.5ex, text depth=0.25ex]
{     &     &   & V_1 \times_{X_2} V_2 &      &   \\
      & V_1 &   &                      & V_2  &   \\
 X_1  &     &X_2& S                    &      &X_3\\
      & S   &   &                      & S    &   \\
 S    &     & S &                      &      & S. \\};
\path[-]         
 (g-4-2) edge[ double, double distance= 2pt] (g-5-1)
         edge[ double, double distance= 2pt] (g-5-3)
 (g-3-4) edge[ double, double distance= 2pt] (g-4-2)
         edge[ double, double distance= 2pt] (g-4-5)
 (g-4-5) edge[ double, double distance= 2pt] (g-5-3)
         edge[ double, double distance= 2pt] (g-5-6);
\path[densely dotted, ->, font=\scriptsize]
 (g-4-2) edge[bend left=10] (g-2-2)  
 (g-4-5) edge[bend left=10] (g-2-5)
 (g-3-4) edge[bend left=10] (g-1-4);
\path[->, font=\scriptsize]
 (g-3-1) edge[bend left=10] (g-5-1)
 (g-5-1) edge[bend left=10] (g-3-1)
 (g-2-2) edge[bend left=10] (g-4-2)
 
 (g-5-3) edge[draw=white, -, line width=4pt, bend left=10] (g-3-3)
 (g-3-3) edge[cross line, bend left=10] (g-5-3)
 (g-5-3) edge[bend left=10] (g-3-3)
 (g-1-4) edge[bend left=10] (g-3-4)

 (g-2-5) edge[bend left=10] (g-4-5)

 (g-3-6) edge[bend left=10] (g-5-6)
 (g-5-6) edge[bend left=10] (g-3-6)

 (g-2-2) edge (g-3-1)
         edge (g-3-3)
 (g-1-4) edge (g-2-2)
         edge (g-2-5)
 (g-2-5) edge[cross line] (g-3-3)
         edge (g-3-6);
\end{tikzpicture}
\end{center}
It is not hard to check that this is associative. 

Next, given $X \rightleftarrows S$, it is straightforward to check that the identity morphism $\id_{X \rightleftarrows S}$ is simply represented by $(X, \id_X, \id_X)$. Moreover, any symmetric common \'etale neighbourhood $(V, \phi, \phi)$ is equivalent to the family $(X, \id_X, \id_X)$ and hence also represents the identity morphism. 

Finally, given a common \'etale neighbourhood, its inverse is represented by the mirror image diagram. Indeed, to show that the composition of a common \'etale neighbourhood with its mirror image represents the identity morphism, we need only remark that
\begin{align*}
\Delta: V \to V\times_{X_2} V
\end{align*}
is an open embedding (because $\psi$ is unramified and locally of finite type), and in particular is \'etale. Pulling back the composition along $\Delta$ gives $(V, \phi, \phi)$, which is equivalent to the identity common \'etale neighbourhood. 

We have proved the following:
\begin{prop}
Under the $(\infty)$-equivalence relation and with the composition and inverses described above, $\prestackVb^{(\infty)}(S)$ is a groupoid.
\end{prop}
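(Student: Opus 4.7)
My plan is to verify the three properties required of a groupoid: that composition is a well-defined, associative operation on $(\infty)$-equivalence classes, that there are two-sided identities, and that every morphism admits an inverse. Along the way I also need to confirm that $(\infty)$-equivalence is genuinely an equivalence relation on common \'etale neighbourhoods.

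First I would check that $(\infty)$-equivalence is reflexive, symmetric, and transitive. Reflexivity takes $W = V$ with identity maps; symmetry is built into the definition. For transitivity, given similar neighbourhoods via $W$ and $W'$, I would form the fibre product $W \times_V W'$ over the common middle term $V$; this gives a new smooth $n$-family over $S$, and I would obtain a section compatible on $S_\red$ by combining the two given ones, extending by formal smoothness of $W \times_V W' \to S$, exactly as in the sketch following Definition~\ref{infinity equivalence relation}. Passing to Zariski-local pieces takes care of the extra flexibility in the full definition.

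Next I would show composition is well-defined. Given $(V_1,\phi_1,\psi_1)$ and $(V_2,\phi_2,\psi_2)$ composable over $X_2 \rightleftarrows S$, the scheme $V_1 \times_{X_2} V_2$ is smooth of relative dimension $n$ over $S$. The compatibility of the sections $\tau_i$ with $\sigma_2$ on $S_\red$ produces a canonical map $S_\red \to V_1 \times_{X_2} V_2$, which I would extend to a section $S \to V_1 \times_{X_2} V_2$ using formal smoothness. Any two extensions agree modulo nilpotents on $S$, so give $(\infty)$-equivalent neighbourhoods (in fact similar, via $W = V_1 \times_{X_2} V_2$ with identity maps, since neither extension is used to define the two structure maps $\phi_1 \circ \pr_1$ and $\psi_2 \circ \pr_2$). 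To see composition descends to equivalence classes, I would replace one factor at a time, using that if $W \to V_1$ is an \'etale refinement witnessing similarity, then $W \times_{X_2} V_2 \to V_1 \times_{X_2} V_2$ is also \'etale, compatible with structure maps, and pulls the section back to the section of $W$ composed with $V_2$'s section; a Zariski-local argument then handles the full $(\infty)$-equivalence. Associativity is then immediate from the canonical isomorphism $(V_1 \times_{X_2} V_2) \times_{X_3} V_3 \cong V_1 \times_{X_2} (V_2 \times_{X_3} V_3)$.

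For identities and inverses, the neighbourhood $(X,\id_X,\id_X)$ represents $\id_{X \rightleftarrows S}$, and composing it on either side with $(V,\phi,\psi)$ returns $(V,\phi,\psi)$ up to the natural identifications $X \times_X V \cong V$ and $V \times_X X \cong V$. For inverses, I take the mirror common \'etale neighbourhood $(V,\psi,\phi)$; composing with the original yields $(V \times_{X_2} V,\, \phi \circ \pr_1,\, \phi \circ \pr_2)$, and as noted in the sketch the diagonal $\Delta \colon V \to V \times_{X_2} V$ is an open embedding because $\psi$ is \'etale, in particular unramified and locally of finite type. Pulling back along $\Delta$ realises $(V, \phi, \phi)$ as a refinement of the composition, and any symmetric common \'etale neighbourhood $(V,\phi,\phi)$ is similar to $(X_1,\id,\id)$ via the \'etale map $\phi$ itself; thus $(V,\psi,\phi) \circ (V,\phi,\psi) = \id$, and symmetrically on the other side.

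The main technical obstacle is the careful handling of sections: none of the sections $\tau$ on the middle terms is strict, so at every step (transitivity of $(\infty)$-equivalence, well-definedness of composition, invariance under refinement) I must verify that a compatible section on $S_\red$ exists canonically, extend by formal smoothness, and show that the ambiguity in the extension is absorbed by the equivalence relation. The Zariski-local flexibility in Definition~\ref{infinity equivalence relation} is essential precisely at the point of comparing two different extensions of the same reduced section, so I would be careful to invoke it whenever two \emph{a priori} distinct formal-smoothness liftings need to be identified.
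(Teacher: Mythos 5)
Your proposal is correct and follows essentially the same route as the paper: composition via the fibre product $V_1 \times_{X_2} V_2$ with the section built on $S_\red$ and extended by formal smoothness (the ambiguity being absorbed by the equivalence relation), identities represented by symmetric neighbourhoods, and inverses via the mirror diagram together with the observation that the diagonal $\Delta: V \to V \times_{X_2} V$ is an open embedding since $\psi$ is unramified and locally of finite type. The extra checks you spell out (transitivity of similarity and invariance of composition under étale refinement of a representative) are details the paper leaves implicit, and your treatment of them is sound.
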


\begin{rmk}
As we will see in Lemma \ref{lemma: uniqueness of liftings} and Proposition \ref{prop: the group homomorphisms are isomorphisms}, two common \'etale neighbourhoods $(V_i, \phi_i, \psi_i)$ $(i=1,2)$ are $(\infty)$-equivalent precisely when they induce the same isomorphism of the formal neighbourhoods of $S$ in the schemes $X_1$ and $X_2$:
\begin{align*}
\hat{\psi_1} \circ \hat{\phi_1}^{-1} = \hat{\psi_2} \circ \hat{\phi_2}^{-1}.
\end{align*}
\end{rmk}

Motivated by this observation, we introduce a family of coarser equivalence relations:
\begin{defn}
Let $c \in \BN$. Two common \'etale neighbourhoods $(V_i, \phi_i, \psi_i)$ are \emph{$(c)$-equivalent} if they induce the same isomorphisms on the $c$th infinitesimal neighbourhoods of $S$ in $X_1$ and $X_2$:

\begin{align*}
\psi_1^{(c)} \circ \left(\phi_1^{(c)}\right)^{-1} = \psi_2^{(c)} \circ \left(\phi_2^{(c)}\right)^{-1}: X_1^{(c)} \EquivTo X_2^{(c)}.
\end{align*}
\end{defn}

Then we let $\prestackVbc{c}(S)$ be the groupoid whose objects are pointed $n$-dimension\-al families over $S$ and whose morphisms are common \'etale neighbourhoods up to $(c)$-equivalence. 

Since $(c)$-equivalence is coarser than $(c+1)$-equivalence for any $c$, and also than $(\infty)$-equivalence, we obtain morphisms of groupoids
\begin{align*}
\prestackVbc{\infty}(S) \to \ldots \to \prestackVbc{c+1}(S) \to \prestackVbc{c}(S) \to \ldots.
\end{align*}

\subsection{Stacks of \'etale and \texorpdfstring{$c$th}{cth}-order germs of varieties}
\label{subsec: stacks of etale germs}
With these preliminary notions and definitions established, we can define the pre\-stacks of germs of varieties as follows:

\begin{defn}\label{prestack of varieties}
Given $c \in \BN \cup \{\infty\}$, let $\prestackVc{c}$ be the prestack that sends a test scheme $S$ to the groupoid whose only object is the trivial pointed $n$-dimensional variety $\pi: S \times \A{n} \rightleftarrows S: z$, and whose automorphisms are given by common \'etale neighbourhoods of $S \times \A{n}$ with itself, modulo $(c)$-equivalence.
\end{defn}

There is a distinguished class of common \'etale neighbourhoods of $S \times \A{n}$, characterised as follows:

\begin{defn}\label{standard etale neighbourhoods}
A common \'etale neighbourhood $V=(V, \phi, \psi)$ between the trivial pointed family and itself will be called \emph{split} if there exists an $n$-dimensional variety $W$ together with maps $\overline{\phi},\overline{\psi}: S \times W \to S \times \A{n}$ (not necessarily \'etale), an open embedding $V \emb S \times W$, and a point $w \in W$ such that the following diagram commutes:
\begin{center}
\begin{tikzpicture}
[>=angle 90,
cross line/.style={preaction={draw=white, -, line width=6pt}}]
\matrix(f)[matrix of math nodes, row sep=1.5em, column sep=3em, text height=1.5ex, text depth=0.25ex]
{               &   &            & V &     \\
                &   & S \times W &   &     \\
 S \times \A{n} &   &            & S & S \times \A{n} \\
                &   & S          &   &     \\
 S              &   &            &   & S.   \\};
\path[-]
 (f-4-3) edge[double, double distance = 2pt] (f-5-1)
         edge[double, double distance = 2pt] (f-5-5)
 (f-3-4) edge[double, double distance = 2pt] (f-4-3);
\path[densely dotted, ->, font=\scriptsize]
 (f-4-3) edge[bend left=10] node[left, inner sep=1pt]{$\id_S \times i_w$} (f-2-3)
 (f-3-4) edge[bend left=10] node[left]{$\tau$} (f-1-4);
\path[->, font=\scriptsize]
 (f-3-1) edge[bend left=10] node[right]{$\pi$} (f-5-1)
 (f-5-1) edge[bend left=10] node[left]{$z$} (f-3-1)
 (f-2-3) edge[bend left=10] node[right]{$\pr_S$} (f-4-3)

 (f-1-4) edge[bend left=10] node[right]{$\rho$} (f-3-4)

 (f-3-5) edge[bend left=10] node[right]{$\pi$} (f-5-5)
 (f-5-5) edge[bend left=10] node[left]{$z$} (f-3-5)

 (f-2-3) edge node[below right]{$\overline{\phi}$} (f-3-1)
         edge[cross line] node[right=5mm, inner sep=3pt]{$\overline{\psi}$} (f-3-5);
\path[right hook->]
 (f-1-4) edge (f-2-3);
\path[->, font=\scriptsize]
 (f-1-4) edge[bend right=15] node[above left]{$\phi$} (f-3-1)
         edge[bend left=15] node[above right]{$\psi$} (f-3-5); 
\end{tikzpicture}
\end{center}
\end{defn}

\begin{rmk}
Split common \'etale neighbourhoods can be simpler to work with, and will arise in our discussion of the Artin approximation theorem. Fortunately we will see in Lemma \ref{lemma: ubiquity of split neighbourhoods} that all common \'etale neighbourhoods of the trivial pointed variety are $(\infty)$-equivalent (and hence $\cth$-equivalent, for any $c$) to a common \'etale neighbourhood which is split. 
\end{rmk}

\begin{defn} 
Let $\varietiesc{c}$ be the stackification of $\prestackVc{c}$ in the \'etale topology. When $c = \infty$, we call $\varietiesc{c}$ the \emph{stack of \'etale germs of $n$-dimensional varieties}. For finite $c$, we call $\varietiesc{c}$ the \emph{stack of $c$th-order germs of $n$-dimensional varieties}. 
\end{defn}

In fact we will find it convenient to work with the intermediate prestack $\prestackVbc{c}$, which lies in between $\prestackVc{c}$ and its stackification $\varietiesc{c}$. 

\begin{defn}
For $c \in \BN \cup \{\infty\}$, let $\prestackVbc{c}$ be the subprestack of $\varietiesc{c}$ sending a test scheme $S$ to the subgroupoid $\prestackVbc{c}(S)$ of $\varietiesc{c}(S)$ defined above. Its objects are pointed $n$-dimensional varieties over $S$ and its morphisms are represented by common \'etale neighbourhoods up to $\cth$-equivalence.
\end{defn}

We see that this gives a prestack whose stackification is $\varietiesc{c}$: indeed, when constructing the stackification of $\prestackVc{c}$ explicitly (as in for example section 8.8 of \cite{stacks-project}), we must add in locally defined objects, which include all of the additional objects of $\prestackVbc{c}(S)$; we must also add in all of the locally defined morphisms between these new objects, and hence in particular all of the morphisms of $\prestackVbc{c}(S)$. The next stage in constructing the stackification is to identify all morphisms which agree locally; however, this has already been done in $\prestackVbc{c}(S)$ by our definition of $(c)$-equivalence. It follows that we can view $\prestackVbc{c}(S)$ as a (non-full) sub-groupoid of $\varietiesc{c}(S)$, and hence by the universal property, we obtain a map from the stackification of $\prestackVbc{c}$ into $\varietiesc{c}$. The quasi-inverse to this map is induced by the obvious inclusion of $\prestackVc{c}$ into $\prestackVbc{c}$.

\begin{rmk}
The crucial difference between the stack $\varietiesc{c}$ and the prestack $\prestackVbc{c}$ (and the reason that it is simpler to work with $\prestackVbc{c}$) is that the groupoid $\varietiesc{c}(S)$ contains isomorphisms represented by common \'etale neighbourhoods that are only defined \'etale-locally over the base as in Remark \ref{locally defined morphisms}.
\end{rmk}

\subsection{Quasi-coherent sheaves on \texorpdfstring{$\varietiesc{c}$}{the stack of germs}}
\label{subsec: quasi-coherent sheaves}
We will be interested in studying the categories of quasi-coherent sheaves on the stacks $\varietiesc{c}$ for $c \in \BN \cup \{\infty\}$. For details of the theory of quasi-coherent sheaves on stacks and more generally prestacks see Gaitsgory's notes \cite{G-QCoh}. Since the categories of quasi-coherent sheaves on a prestack and its stackification are equivalent, we have the following equivalences:
\begin{align*}
\QCoh{\prestackVc{c}} \simeq \QCoh{\varietiesc{c}} \simeq \QCoh{\prestackVbc{c}}.
\end{align*}

We will find it convenient to work in the realisation of the category given by $\QCoh{\prestackVbc{c}}$. Concretely, an object $M$ of $\QCoh{\prestackVbc{c}}$ consists of a collection of quasi-coherent sheaves together with coherences: for each map $S \to \prestackVbc{c}$ (i.e. for each $X \rightleftarrows S$ smooth of relative dimension $n$), we have an object $M_{X \rightleftarrows S} \in \QCoh{S}$. Moreover, we require compatibility under pullbacks in the following sense. Suppose that for $i=1,2$ we have $S_i \xrightarrow{(\pi_i,\sigma_i)} \prestackVbc{c}$, two $n$-dimensional families, together with a map $f: S_1 \to S_2$ and a commutative diagram of prestacks:
\begin{center}
\begin{tikzpicture}[>=angle 90]
\matrix(i)[matrix of math nodes, row sep=1em, column sep=1em, text height=1.5ex, text depth=0.25ex]
{ S_2 &   & \prestackVbc{c} .\\
      &{ }& \\
 S_1  &   & \\};
\path[->, font=\scriptsize]
 (i-1-1) edge node[above]{$(\pi_2, \sigma_2)$} (i-1-3)
 (i-3-1) edge node[left]{$f$} (i-1-1)
         edge node[below, sloped]{$(\pi_1, \sigma_1)$} (i-1-3);
\path[-implies, font=\scriptsize]
 (i-2-2) edge[double equal sign distance] node[below=3pt]{$\alpha$} (i-1-1);
\end{tikzpicture}
\end{center} 
Recall that in $\PreStk$, commutativity of a diagram is a structure, not a property, in this case amounting to an automorphism $\alpha$ in $\prestackVbc{c}(S_1)$ between the objects corresponding to $(\pi_1, \sigma_1)$ and $(\pi_2, \sigma_2) \circ f$, represented by a common \'etale neighbourhood of the form
\begin{center}
\begin{tikzpicture}[>=angle 90]
\matrix(e)[matrix of math nodes, row sep=1.5em, column sep=3em, text height=1.5ex, text depth=0.25ex]
{   &  V_\alpha  &  \\
 X_1&     &      S_1 \times_{S_2} X_2 \\
    &  S_1       &  \\
 S_1  &          & S_1.   \\};
\path[->, font=\scriptsize]
 (e-2-1) edge[bend left=10] node[right]{$\pi_1$} (e-4-1)
 (e-4-1) edge[bend left=10] node[left]{$\sigma_1$} (e-2-1)
 (e-1-2) edge[bend left=10] node[right]{$\rho_\alpha$} (e-3-2)
 (e-2-3) edge[bend left=10] node[right]{$f^*\pi_2$} (e-4-3)
 (e-4-3) edge[bend left=10] node[left]{$f^*\sigma_2$} (e-2-3)
 (e-1-2) edge node[above left]{$\phi_\alpha$} (e-2-1)
         edge node[above right]{$\psi_\alpha$} (e-2-3);
\path[-]
 (e-3-2) edge[double, double distance=2pt] (e-4-1)
         edge[double, double distance=2pt] (e-4-3);
\path[densely dotted, ->, font=\scriptsize]
 (e-3-2) edge[bend left=10] node[left]{$\tau_\alpha$} (e-1-2);
\end{tikzpicture}
\end{center}

We require that in such a situation, we have an isomorphism 
\begin{align*}
M(f, \alpha): f^*\left(M_{X_2 \rightleftarrows S_2}\right) \EquivTo M_{X_1 \rightleftarrows S_1}
\end{align*}
in $\QCoh{S_1}$. This isomorphism must be independent of the choice of representative $(V_\alpha, \phi_\alpha, \psi_\alpha)$ of the isomorphism $\alpha$ in $\prestackVbc{c}(S_1)$. We also require that these isomorphisms be compatible with compositions $S_1 \xrightarrow{f} S_2 \xrightarrow{g} S_3$.

\subsection{Strict analogues, for the \texorpdfstring{$\CO$}{O}-module setting}
\label{subsec: strict analogues}
Let us introduce the following strict analogues, which will be important in the setting of universal $\CO$-modules.

\begin{defn}
Fix $c \in \BN \cup \{\infty\}$. Let $\prestackPVc{c}$ be the prestack that sends a test scheme $S$ to the groupoid whose only object is the trivial pointed $n$-dimensional variety $\pi: S \times \A{n} \rightleftarrows S: z$, and whose automorphisms are given by strict common \'etale neighbourhoods of $S \times \A{n}$ with itself, up to $(c)$-equivalence.
\end{defn}

\begin{rmk}
In the case $c= \infty$, one might be tempted to consider a \emph{strict} version of $(\infty)$-equivalence, defined in the obvious way. It is straightforward to check that two strict common \'etale neighbourhoods are $(\infty)$-equivalent if and only if they are strictly $(\infty)$-equivalent, so in fact it is not necessary to introduce this latter notion. 
\end{rmk}

\begin{defn} 
Let $\pointedvarietiesc{c}$ be the stackification of $\prestackPVc{c}$ in the \'etale topology. When $c=\infty$, we call $\pointedvarietiesc{c}$ the \emph{stack of pointed \'etale germs of $n$-dimensional varieties}; when $c$ is finite, $\pointedvarietiesc{c}$ is the \emph{stack of pointed $c$th-order germs of $n$-dimensional varieties}. 
\end{defn}

As in the non-strict setting, we will also work with an intermediate prestack $\prestackPVbc{c}$, which lies in between the prestack $\prestackPVc{c}$ and its stackification. Name\-ly, for a given test scheme $S$, an object of the groupoid $\prestackPVbc{c}(S)$ is a pointed $n$-dimensional family over $S$, $\pi: X \rightleftarrows S: \sigma$. Given two such pointed families, a morphism between them is represented by a strict common \'etale neighbourhood $(V, \phi, \psi)$, modulo $(c)$-equivalence. Similarly to the groupoid $\prestackVbc{c}(S)$, composition is given by pullback and inverses are given by mirror-image diagrams. 

The difference between the strict and non-strict definitions lies in whether we require morphisms to preserve the distinguished points of the $n$-dimensional varieties (in the strict setting), or allow infinitesimal translations (in the non-strict setting). As we will see in Section \ref{sec: universal modules}, this is what gives quasi-coherent sheaves on $\pointedvarietiesc{c}$ the additional structure of an action of the sheaf of differential operators. 

\section{Groups of automorphisms and their classifying stacks}
\label{sec: classifying stacks}
In this section, we introduce certain groups $G$ and $K$ of automorphisms of the formal disc. We begin in \ref{subsec: G} by defining the group formal scheme $G$ and its finite-dimensional quotients $G^\cth$; in \ref{subsec: K} we introduce the reduced part $K = G_\red$. It is a pro-algebraic group, and contains a pro-unipotent subgroup $K_u$. In \ref{subsec: representations and classifying stacks} we give some general definitions and facts regarding representations of group-valued prestacks and classifying stacks, and in \ref{subsec: application to G and K} we apply these ideas to the groups $G$ and $K$. We also begin the comparison of the classifying stacks $BG$ and $BK$ with the stacks $\varietiesc{\infty}$ and $\pointedvarietiesc{\infty}$, which will be the motivation for the next several sections. 

\subsection{The group \texorpdfstring{$G$}{G} of continuous automorphisms of the formal disc}
\label{subsec: G}
\begin{defn}\label{def: G}
Let $\On = k\series{n}$, and let $G=\underline{\Aut} \On$ be the ind-affine group formal scheme of continuous automorphisms of $\On$. Explicitly, for $S = \Spec(R)$, $G(S)$ is the group of automorphisms of the $R$-algebra $R\series{n}$, continuous with respect to the topology corresponding to the ideal $\fm$ generated by $(t_1, \ldots, t_n)$. 
\end{defn}

A continuous homomorphism $\rho: R \series{n} \to R \series{n}$ is determined by its values on the topological generators $t_1, \ldots, t_n$. Given a multi-index $J=(j_1, \ldots, j_n) \in \BZ^n_{\ge 0}$, let us denote by $r^k_{J}$ the coefficient of $\underline{t}^J =t_1^{j_1}\cdots t_n^{j_n}$ in the series $\rho(t_k) \in R\series{n}$. For $k^\prime \in \{1, \ldots, n\}$, let $e_{k^\prime} = (0, \ldots, 0, 1, 0, \ldots, 0)$ be the multi-index with $1$ only in the $k^\prime$th place. With this notation,
\begin{align}\label{defining rho}
\rho: t_k \mapsto r^k_{\underline{0}} + \sum_{k^\prime=1}^n r_{e_{k^\prime}}^k t_{k^\prime} + \text{ higher order terms}.
\end{align}
The condition that this determines a continuous homomorphism is equivalent to requiring each $r^k_{\underline{0}}$ to be a nilpotent element of $R$. Then the homomorphism $\rho$ determined by the formula (\ref{defining rho}) is invertible precisely when the matrix $(r^k_{e_{k^\prime}})_{k,k^\prime} \in M_n (R)$ is invertible.

This allows us to describe the indscheme structure of $G$ explicitly:
\begin{align*}
G = \colim_{N \in \BN} {\Spec \left( {k[a^k_J, (\det{(a^k_{e_{k^\prime}})_{k, k^\prime}})^{-1}]/((a^k_{\underline{0}}) ^N) }\right) }.
\end{align*}

\begin{defn}\label{def: G^c}
Given $c \in \BN$, we can also consider $G^{\cth}$, a quotient of $G$:
\begin{align*}
G^\cth = \colim_{N \in \BN} {\Spec \left( {k[a^k_J, (\det{(a^k_{e_{k^\prime}})_{k, k^\prime}})^{-1}]_{|J|\le c}/((a^k_{\underline{0}}) ^N) }\right) },
\end{align*}
where 
\begin{align*}
|J| \defeq \sum_{i=1}^n j_i.
\end{align*}
\end{defn}

We refer to $G^\cth$ as the group formal scheme of automorphisms of the formal $n$-dimensional disc, although this is technically an abuse of terminology, because the formula $(2)$ taken modulo $\fm^{c+1}$ does not always determine a continuous automorphism of $\On/\fm^{c+1}$. Now $G^\cth$ is an indscheme of finite type and a quotient of $G$, and we can express $G$ as the limit
\begin{align*}
G = \lim_{c \in \BN} G^\cth.
\end{align*}
That is, $G$ is a pro-object in the category of ind-affine group formal schemes. 

\begin{eg}
It may be useful to keep in mind the notationally simpler one-dimensional setting. When $n=1$, an automorphism $\rho: R\seriest \to R\seriest$ is determined by its value on the single generator $t$:
\begin{align*}
\rho: t \mapsto r_0 + r_1 t + r_2 t^2 + \ldots,
\end{align*}
where $r_0 \in \Nil(R)$ and $r_1 \in R^\times$. 

The indscheme $G$ is the colimit (of schemes of infinite type)
\begin{align*}
G=\colim_{N \in \BN} \Spec{ k[a_0, a_1, a_1^{-1}, a_2, a_3, \ldots]/(a_0^N) }.
\end{align*}
On the other hand, it is also the limit of the indschemes $G^\cth$ of finite type, where
\begin{align*}
G^\cth = \colim_{N \in \BN} \Spec{k[a_0, a_1, a_1^{-1}, a_2, a_3, \ldots, a_c]/(a_0^N)}.
\end{align*}
The quotient maps $G^\cth \to G^{(c-1)}$ correspond to the inclusions
\begin{align*}
k[a_0, a_1, a_1^{-1}, a_2, a_3, \ldots, a_{c-1}]/(a_0^N) &\emb k[a_0, a_1, a_1^{-1}, a_2, a_3, \ldots, a_c]/(a_0^N)\\
a_i & \mapsto a_i.
\end{align*} 

\end{eg}

\subsection{The reduced part \texorpdfstring{$K=G_\red$}{K}}
\label{subsec: K}
\begin{defn} \label{def: K}
Let $K=G_\red$ denote the reduced part of the indscheme $G$:
\begin{align*}
K = \Spec{ k[a^k_J, (\det{(a^k_{e_{k^\prime}})_{k, k^\prime}})^{-1}]_{|J| > 0} }.
\end{align*}
\end{defn}

It is an affine group scheme of infinite type. Geometrically, $(\Spec{R})$-points of $K$ correspond to continuous automorphisms $\rho: R\series{n} \to R \series{n}$ such that the constant term of each series $\rho(t_k)$ is zero. We think of $K$ as parametrising automorphisms of the formal disc $\Spf{k\series{n}}$ which fix the origin $0$, whereas the automorphisms parametrised by the larger group $G$ may involve infinitesimal translations of $0$. 

We view $K$ as a \emph{pro-algebraic group}: it has finite-dimensional quotients $K^\cth$, parametrising automorphisms of $k\series{n} / \fm^c$ which preserve the origin. 
\begin{defn}\label{def: K^c}
Explicitly, $K^\cth$ is the algebraic group
\begin{align*}
K^\cth = \Spec{ k[a^k_J, (\det{(a^k_{e_{k^\prime}})_{k, k^\prime}})^{-1}]_{0<|J|<c+1} },
\end{align*}
where the group structure comes from composition of the automorphisms $\rho$. 
\end{defn}

Note that we have obvious maps
\begin{align*}
K, K^\cth \to GL_n,
\end{align*}
where the map on $(\Spec{R})$-points sends an automorphism $\rho$ to the matrix
\begin{align*}
(r^k_{e_{k^\prime}})_{k, k^\prime} \in GL_n(R),
\end{align*}
in the notation of (\ref{defining rho}). These are homomorphisms of affine group schemes. (Notice that we might try to define a similar map for the groups $G, G^\cth$, but that this no longer respects the group structure.)

\begin{defn}\label{defn: unipotent subgroups of K}
Let $K_u$ and $K^\cth_u$ denote the kernels of the homomorphisms of group schemes $K \to GL_n$ and $K^\cth \to GL_n$ respectively. 
\end{defn}

Then $K^\cth_u$ is a unipotent algebraic group, and $K_u = \displaystyle\lim_{c \in \BN} K_u^\cth$ is a pro-unipotent group. We can write
\begin{align*}
K = GL_n \ltimes K_u, \qquad K^\cth = GL_n \ltimes K^\cth_u;
\end{align*} 
this will be helpful in Section \ref{sec: groups of etale automorphisms} in understanding the representation theory of $K$. 

\begin{eg}
Let us again consider the case $n=1$, where the notation is more pleasant. We have
\begin{align*}
K & = \Spec{ k[a_1, a_1^{-1}, a_2, a_3, \ldots ]},\\
K^\cth & = \Spec{ k[a_1, a_1^{-1}, a_2, a_3, \ldots, a_c]}.
\end{align*}
The maps $K, K^\cth \to GL_1 = \BG_m = \Spec k[x,x^{-1}]$ are induced by the algebra homomorphisms given by 
\begin{align*}
x \mapsto a_1.
\end{align*}
The unipotent groups are given by
\begin{align*}
K_u &= \Spec{ k[a_2, a_3, \ldots] },\\
K^\cth_u &= \Spec{ k[a_2, a_3, \ldots, a_c]}.
\end{align*}
Let us consider the coalgebra structure on the algebra of functions $k[a_2, a_3, \ldots]$, induced by the composition of automorphisms $\rho, \sigma \in K_u(k)$. Suppose that
\begin{align*}
\rho: t & \mapsto t + r_2 t^2 + r_3 t^3 + \ldots, \\
\sigma: t & \mapsto t + s_2 t^2 + s_3 t^3 + \ldots 
\end{align*}
Then
\begin{multline*}
\rho \circ \sigma: t \mapsto \\ t + (r_2 + s_2) t^2 + (r_3 + 2 r_2 s_2 + s_2) t^3 + (r_4 + 3 r_3 s_2 + r_2 s_2^2 + 2 r_2 s_3 + s_4) t^4 + \ldots
\end{multline*}
From this we see that the comultiplication satisfies
\begin{align*}
a_2 &\mapsto a_2 \otimes 1 + 1 \otimes a_2, \\
a_3 &\mapsto a_3 \otimes 1 + 2 a_2 \otimes a_2 + 1 \otimes a_3,\\
a_4 &\mapsto a_4 \otimes 1 + 3 a_3 \otimes a_2 + a_2 \otimes a_2^2 + 2 a_2 \otimes a_3 + 1 \otimes a_4,
\end{align*}
and so on. 

The action of $\BG_m$ on $K_u$ (and similarly on $K^\cth_u$ for any $c$) induces a grading on the algebra of functions as follows: a $k$-point of $\BG_m$ is of the form $z: t \mapsto zt$, for $z \in k^\times$. Conjugating $\rho \in K_u(k)$ by $z$ gives
\begin{align*}
z \circ \rho \circ z^{-1} : t \mapsto t + zr_2 t^2 + z^2 r_3 t^3 + \ldots;
\end{align*}
that is, the grading on $k[a_2, a_3, \ldots]$ is given by $\deg(a_j) = j-1$. 
\end{eg}

Returning to the general setting ($n \ge 1$), note that the diagonal inclusion $\BG_m \emb GL_n$ results in a grading of the algebra of functions $k[a^k_J]_{|J|>1}$ of $K_u$ (and again, similarly for $K_u^\cth$): we have $\deg(a^k_J) = |J| -1$. It will be important for us that the grading is \emph{non-negative}.

\subsection{Representations and classifying stacks}
\label{subsec: representations and classifying stacks}
\begin{defn}
By a \emph{group-valued prestack}, we mean a functor
\begin{align*}
H : (\Sch^\text{aff} )^\op \to \Grp.
\end{align*}
\end{defn}

The ordinary prestack underlying $H$ is given by composing with the forgetful functor $\Grp \to \Set$ and the inclusion $\Set \to \infty{-}\Grpd$.

Let $H$ be any group-valued prestack. We wish to consider the category $\Rep(H)$ of representations of $H$:

\begin{defn}\label{def: representation of a group-valued prestack}
A \emph{representation} of $H$ on a $k$-vector space 
$V$ is a morphism of group-valued functors
\begin{align*}
\bR: H \to GL_V;
\end{align*}
that is, for any $S = \Spec{R}$ we have
\begin{align*}
\bR_R: H(S) \to GL(V \otimes_k R), 
\end{align*}
natural in $R$. 
\end{defn} 

We can reformulate this definition in a more geometric manner as follows. Recall that given a group $H$ we can define the prestack $BH_{\triv}$ classifying trivial principal $H$-bundles: for a test scheme $S$, $BH_{\triv}(S)$ is a groupoid containing only one object, the trivial bundle $S \times H \to S$. The automorphism group $\Aut_{BH_{\triv}(S)} (S \times H \to S)$ is the group $H(S)$. 

\begin{defn}
The \emph{classifying stack} $BH$ of $H$ is the stackification of the prestack $BH_{\triv}$ in the \'etale topology. 
\end{defn}

\begin{rmk}
If $H$ is an algebraic group, this is the usual classifying stack: that is, $S$-points of $BH$ are principal $H$-bundles over $S$, and automorphisms are morphisms of $H$-bundles. 
\end{rmk}

Then we have that
\begin{align*}
\Rep(H) \simeq \QCoh{BH_\triv} \simeq \QCoh{BH},
\end{align*}
where the second equivalence is due to the fact that $\QCoh{\bullet}$ is preserved by stackification. 

In the case that $H$ is an affine group scheme, say $H = \Spec {A}$ with $A$ a Hopf algebra, then the data of a representation of $H$ on a vector space $V$ is equivalent to the structure of an $A$-comodule on $V$:
\begin{align*}
V \to V \otimes_k A.
\end{align*}
(See for example Chapter VIII, Prop. 6.1. in \cite{Mil-AGS}.)

\begin{observation} \label{observation: representations are locally finite}
From this definition we can show that any representation $V$ of an affine group scheme is \emph{locally finite}: that is, every vector $v \in V$ is contained in some finite-dimensional sub-representation. (For example, see Chapter VIII, Prop. 6.6. of \cite{Mil-AGS}.) This is not true of representations of more general group-valued prestacks, as we will see in Section \ref{subsec: non-locally finite representations}.
\end{observation}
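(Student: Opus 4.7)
The plan is to exploit the reformulation just stated: a representation of $H = \Spec A$ on a $k$-vector space $V$ is equivalent to a comodule structure $\rho : V \to V \otimes_k A$ satisfying coassociativity and counitality. Given $v \in V$, the goal is to exhibit a finite-dimensional subspace $W \subseteq V$ with $v \in W$ and $\rho(W) \subseteq W \otimes_k A$; such a $W$ is by definition a subcomodule and hence a subrepresentation of $H$.

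First I would fix a $k$-basis $\{a_i\}_{i \in I}$ of $A$ and write $\rho(v)$ uniquely as $\sum_{i \in I} v_i \otimes a_i$, with only finitely many $v_i \in V$ nonzero. Let $W$ be the span of these $v_i$; it is finite-dimensional by construction. The counit axiom $(\mathrm{id}_V \otimes \epsilon) \circ \rho = \mathrm{id}_V$ immediately gives $v = \sum_i \epsilon(a_i)\,v_i \in W$, which disposes of the easier half of the claim.

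The main step is to verify $\rho(W) \subseteq W \otimes_k A$. Here I would apply coassociativity $(\rho \otimes \mathrm{id}_A) \circ \rho = (\mathrm{id}_V \otimes \Delta) \circ \rho$ to $v$ itself. Expanding the left-hand side as $\sum_i \rho(v_i) \otimes a_i$ and the right-hand side by writing each $\Delta(a_i) = \sum_{j,k} c_i^{jk}\, a_j \otimes a_k$ in the basis $\{a_j \otimes a_k\}$ of $A \otimes_k A$, one obtains on the right a sum $\sum_{j,k}\bigl(\sum_i c_i^{jk} v_i\bigr) \otimes a_j \otimes a_k$ whose every vector coefficient lies in $W$. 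Writing $\rho(v_k) = \sum_j w_{k,j} \otimes a_j$ and comparing coefficients of $a_k$ in the outermost tensor factor, linear independence forces $w_{k,j} \in W$ for all $j,k$, so $\rho(v_k) \in W \otimes_k A$ as required.

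I expect the main obstacle to be precisely this coassociativity bookkeeping: one has to be careful that the expansions are taken with respect to a \emph{basis} of $A$ (so that coefficient comparison is legitimate) and that both sides of coassociativity are written out in the same basis of $V \otimes_k A \otimes_k A$. Once the identification $\rho(v_k) = \sum_j w_{k,j} \otimes a_j$ with $w_{k,j} \in W$ is established, the observation follows at once. No hypothesis on $A$ is used beyond the Hopf algebra structure; this uniformity is exactly why the argument applies to pro-finite-type examples such as $K$, but fails for ind-affine group formal schemes like $G$, whose coordinate object is not a single commutative algebra.
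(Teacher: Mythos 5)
Your argument is correct, and it is essentially the proof the paper relies on: the paper does not prove the observation itself but cites the standard result (Milne, Ch.~VIII, Prop.~6.6), whose proof is exactly your comodule argument — expand $\rho(v)$ in a $k$-basis of $A$, take $W$ to be the span of the finitely many vector coefficients, use the counit to get $v \in W$ and coassociativity plus linear independence of the basis to get $\rho(W) \subseteq W \otimes_k A$. The only point worth noting is that nothing beyond the coalgebra structure of $A$ is used, and the affineness hypothesis enters solely through the equivalence between representations and $A$-comodules stated just before the observation.
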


Now suppose that $H$ is a pro-algebraic group, so that
\begin{align*}
H = \lim_{i} H_i, 
\end{align*}
where $H_i$ runs over all finite-dimensional quotients of $H$. (The example we have in mind is of course the group $K$ of Section \ref{subsec: K}.) If we forget for the moment about the scheme structure on these groups, the pro-structure of $H$ gives it a topology: a base for the open neighbourhoods of $1_H$ is given by the kernels $N_i$ of the quotient maps $H \surj H_i$. 

We might be interested in restricting our attention to only those representations of $H$ which are continuous with respect to this topology. If we give the vector space $V$ the discrete topology, this amounts to requiring that for each $v \in V$, the action of $H$ on $v$ factors through one of the finite-dimensional quotients $H_i$, or equivalently, that $V$ is the union of the subrepresentations $V_i$, where $V_i$ is the largest subspace of $V$ on which the action of $H$ factors through $H_i$ or on which the action of $N_i$ is trivial. 

If $V$ is finite-dimensional to begin with, the group-valued prestack $GL_V$ is also a finite-dimensional algebraic group, and so this condition is automatic. Combining this with Observation \ref{observation: representations are locally finite}, we conclude that all representations of $H$ are necessarily continuous with respect to the discrete topology on the underlying vector space. In other words:

\begin{prop}\label{prop: representations of pro-algebraic groups are continuous}
For $H = \lim_i{H_i}$ a pro-algebraic group, 
\begin{align*}
\Rep(H) \simeq \colim_i \Rep(H_i).
\end{align*}
\end{prop}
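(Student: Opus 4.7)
The plan is to build a comparison functor from the right-hand side into $\Rep(H)$ and show it is an equivalence, by formalising the continuity discussion immediately preceding the statement. Each quotient $q_i : H \to H_i$ induces a colimit-preserving pullback $q_i^* : \Rep(H_i) \to \Rep(H)$, and the collection $\{q_i^*\}$ is compatible (up to canonical natural isomorphism) with the transition functors $\Rep(H_i) \to \Rep(H_j)$ for $j \ge i$ coming from the pro-system quotients $H_j \to H_i$. The universal property of filtered colimits in the $\infty$-category of cocomplete categories then produces a colimit-preserving comparison functor
\begin{align*}
\Phi : \colim_i \Rep(H_i) \to \Rep(H).
\end{align*}

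For essential surjectivity, I would let $V \in \Rep(H)$ and note that, since $H$ is affine (a limit of affine algebraic groups), Observation \ref{observation: representations are locally finite} implies $V$ is the filtered union of its finite-dimensional sub-representations $V_\alpha$. Each $V_\alpha$ corresponds to a homomorphism of affine group schemes $H \to GL_{V_\alpha}$ into a finite-type target. Writing the coordinate ring $k[H]$ as the filtered colimit of the finite-type rings $k[H_i]$, the corresponding Hopf algebra map out of $k[GL_{V_\alpha}]$ has finitely generated image and hence factors through some $k[H_{i(\alpha)}]$. This exhibits each $V_\alpha$ in the essential image of $q_{i(\alpha)}^*$, and expressing $V \simeq \colim_\alpha V_\alpha$ places $V$ in the essential image of $\Phi$.

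For full faithfulness, given objects represented by $V \in \Rep(H_i)$ and $W \in \Rep(H_j)$ in the colimit, I would use filteredness of the indexing system to pass to a common later index $k$, replacing both objects by pullbacks of $H_k$-representations $V_k$ and $W_k$. In the filtered colimit of cocomplete categories the morphism set between them is then $\Hom_{\Rep(H_k)}(V_k, W_k)$, while on the $\Rep(H)$ side a morphism $q_k^* V_k \to q_k^* W_k$ is a $k$-linear map intertwining the $H$-action; since this action factors through $H_k$ on both sides, such an intertwiner is automatically $H_k$-equivariant, and the two Hom-sets coincide. The main obstacle is the essential surjectivity step: everything else is formal, but that step genuinely requires combining local finiteness of representations of the affine group scheme $H$ with the observation that any morphism from $H$ to a finite-type algebraic group factors through some $H_i$, and it is precisely at this point that the pro-algebraic structure of $H$ is used.
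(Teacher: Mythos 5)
Your proposal is correct and takes essentially the paper's approach: the equivalence is deduced from local finiteness of representations of the affine group scheme $H$ (Observation \ref{observation: representations are locally finite}) together with the fact that each finite-dimensional representation factors through one of the finite-dimensional quotients $H_i$, the remaining categorical comparison being formal. The only difference is cosmetic: the paper obtains the factoring immediately because the $H_i$ run over all finite-dimensional quotients (so the image of $H \to GL_V$ is itself one of them), while your finite-generation argument for $k[GL_V] \to k[H] = \colim_i k[H_i]$ is precisely the cofinality argument the paper gives in the paragraph following the proposition.
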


Given a pro-algebraic group $H$ we can always write it as the limit of its finite-dimensional quotients as above; however, as with our group $K = \displaystyle\lim_{c \in \BN} K^\cth$ we can often restrict our attention to a subset of these algebraic quotients. View the collection of all finite-dimensional quotients $H_i = \Spec{A_i}$ as a category $\CI$, whose morphisms are surjections compatible with the quotient maps from $H$, and suppose that we have a subcategory $\CJ \emb \CI$ such that 
\begin{align*}
H \simeq \lim_{j \in \CJ} H_j.
\end{align*}
Then for any $i \in \CI$ we can show that there exists $j \in \CJ$ such that $H_i$ is a quotient of $H_j$. It follows that $\CJ^\op$ is cofinal in $\CI^\op$ and in particular 
\begin{align*}
\Rep(H) \simeq \colim_{j \in \CJ^\op} \Rep(H_j).
\end{align*}

\subsection{Application to \texorpdfstring{$G$}{G} and \texorpdfstring{$K$}{K}}
\label{subsec: application to G and K}

From the above discussion, it is immediate that one has a commutative diagram of equivalences:

\begin{center}
\begin{tikzpicture}[>=angle 90,bij/.style={above,sloped,inner sep=0.5pt}]
\matrix(a)[matrix of math nodes, row sep=3em, column sep=3em, text height=1.5ex, text depth=0.25ex]
{\Rep(K) & \QCoh{BK} \\
\displaystyle{\colim_{c \in \BN} \Rep\left(K^\cth\right)} & \displaystyle{\colim_{c \in \BN} \QCoh{BK^\cth}}. \\};
\path[->, font=\scriptsize]
 (a-1-1) edge node[bij]{$\sim$} (a-1-2)
 (a-2-1) edge node[bij]{$\sim$} (a-1-1)
 (a-2-1) edge node[bij]{$\sim$} (a-2-2)
 (a-2-2) edge node[bij]{$\sim$} (a-1-2);
\end{tikzpicture}
\end{center}
Now we would like to make a similar comparison between the categories $\Rep(G)$ and $\displaystyle\colim_{c \in \BN} \Rep(G^\cth)$. We have the following:

\begin{prop}\label{prop: representations of G are continuous}
All representations of the group-valued prestack $G$ are continuous with respect to the the topology induced by the pro-structure of $G$:
\begin{align*}
\colim_{c \in \BN} \Rep(G^\cth) \EquivTo \Rep(G).
\end{align*}
\end{prop}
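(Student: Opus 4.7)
The plan is to reduce to Proposition~\ref{prop: representations of pro-algebraic groups are continuous} applied to the reduced subgroup $K = G_\red$. The tower of quotients $G \twoheadrightarrow G^\cth$ induces an obvious pullback functor $\Phi \colon \colim_c \Rep(G^\cth) \to \Rep(G)$. Full faithfulness of $\Phi$ is immediate from the surjectivity of $G(R) \twoheadrightarrow G^\cth(R)$ for every test ring $R$---any truncated continuous automorphism of $R\series{n}/\fm^{c+1}$ lifts to one of $R\series{n}$ by sending higher-order monomials to zero---so a $k$-linear map commutes with the $G$-action if and only if it commutes with the $G^\cth$-action.

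For essential surjectivity, the key observation is that the kernel $N_c := \ker\bigl(G \twoheadrightarrow G^\cth\bigr)$ is in fact contained in $K$, and coincides with $\ker\bigl(K \twoheadrightarrow K^\cth\bigr)$. Indeed, if $\rho \in G(R)$ maps to the identity in $G^\cth(R)$, then all coefficients $r^k_J$ with $|J| \le c$ agree with those of the identity; in particular the constant coefficients $r^k_{\underline 0}$ (where $|J| = 0 \le c$) vanish, forcing $\rho \in K(R)$, and the remaining conditions place $\rho$ in $\ker(K \to K^\cth)$. This is a direct reading of Definitions~\ref{def: G} and~\ref{def: G^c}, and is the only real input beyond formal manipulation.

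Given this identification, the rest is routine. For any representation $V$ of $G$, the subspace $V^{N_c}$ of $N_c$-invariants is a $G$-subrepresentation by normality of $N_c$, and the action of $G$ on it factors through $G/N_c = G^\cth$. Viewed as a $K$-representation, $V^{N_c}$ is precisely the largest subspace on which $K$ acts through $K^\cth$; applying Proposition~\ref{prop: representations of pro-algebraic groups are continuous} to $K$ yields $V = \bigcup_c V^{N_c}$ as a $K$-representation, and hence also as a $G$-representation. This exhibits $V$ as a filtered colimit of $G$-representations pulled back from the $G^\cth$, completing essential surjectivity.
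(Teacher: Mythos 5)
Your proof is correct and takes essentially the same route as the paper: the decisive input in both is that $N_c=\ker\bigl(G\twoheadrightarrow G^{(c)}\bigr)$ lies in $K$ (because $G^{(c)}$ retains the constant coefficients, which must therefore vanish) and coincides with $\ker\bigl(K\twoheadrightarrow K^{(c)}\bigr)$, after which Proposition \ref{prop: representations of pro-algebraic groups are continuous} applied to $K$ does the work. The paper argues vector-by-vector (for each $v$ the map $\rho\mapsto\bR_R(\rho)(v\otimes 1_R)$ kills $N_c$ for some $c$) rather than through the invariant subspaces $V^{N_c}$, and leaves full faithfulness implicit, but these are cosmetic differences.
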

\begin{proof}
Let $V$ be a vector space and let
\begin{align*}
\bR: G \to GL_V
\end{align*}
be a representation of $G$ on $V$. Let $v \in V$; we want to show that the action of $G$ on $v$ factors through one of its finite-dimensional quotients $G^\cth$, i.e. that there exists some $c$ such that for every $S = \Spec{R}$ the induced map
\begin{align*}
\bR_{R,v}: G(S) &\to V \otimes_k R \\
 \rho & \mapsto \bR_R (\rho) (v \otimes 1_R) 
\end{align*}
factors through the quotient $G^\cth(S)$. 

This is equivalent to showing that the restriction of $\bR_{R,v}$ to the kernel $N_c(S)$ of the quotient map $G(S) \surj G^\cth(S)$ is the constant map
\begin{align*}
n \mapsto v \otimes 1_R
\end{align*}
for every $S = \Spec{R}$. 

However, the embedding $K \emb G$ allows us to view $V$ as a representation of $K$; then by Proposition \ref{prop: representations of pro-algebraic groups are continuous}, there exists $c$ such that the restriction of $\bR_{R,v}$ to $K(S)$ factors through $K^\cth(S)$ for every $S = \Spec{R}$. This implies that the restriction of $\bR_{R,v}$ to $\ker(K(S) \surj K^\cth(S))$ is the constant map---but this kernel is exactly $N_c(S)$.  \qed
\end{proof}

\begin{observation}
Fix a base scheme $S = \Spec(R)$ and suppose that we have a common \'etale neighbourhood $(V, \phi, \psi)$ of the trivial family. Taking completions along the embeddings of $S$, we obtain isomorphisms over $S$
\begin{align*}
\hat{\phi}, \hat{\psi}: \comp{V}{S} \EquivTo S \times \Ahat{n},
\end{align*}
and hence, composing, an isomorphism
\begin{align*}
\hat{\phi} \circ \hat{\psi}^{-1}: S \times \Ahat{n} \to S \times \Ahat{n},
\end{align*}
or equivalently, a continuous automorphism of $\Spec(R \series{n})$, i.e. an element $\omega_V$ of $G(S)$. Notice that $\omega_V$ lies in $K(S)$ precisely if the common \'etale neighbourhood is strict. 
\end{observation} 

Motivated by this observation we formulate the following:
\begin{prop} \label{prop: faithful morphism}
We have a natural morphism of prestacks:
\begin{align*}
F^{(\infty)}: \prestackVc{\infty} \longrightarrow BG_\triv.
\end{align*} 
\end{prop}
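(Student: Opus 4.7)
The plan is to define $F^{(\infty)}$ directly using the observation immediately preceding the proposition, and then check that the formula is compatible with $(\infty)$-equivalence, with the group structure, and with base change. Since both $\prestackVc{\infty}(S)$ and $BG_\triv(S)$ are one-object groupoids, specifying $F^{(\infty)}$ on $S$ amounts to giving a group homomorphism
\[
F^{(\infty)}_S : \Aut_{\prestackVc{\infty}(S)}(S \times \A{n}) \longrightarrow G(S),
\qquad [(V, \phi, \psi)] \longmapsto \omega_V := \hat{\psi} \circ \hat{\phi}^{-1},
\]
natural in $S$. Here $\hat\phi$ and $\hat\psi$ denote the completions along the section $\tau$ of $\rho: V \to S$. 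One first notes that the formula makes sense: the formal completion $\comp{V}{S}$ depends only on the topological image of $\tau$, which on $S_\red$ agrees with $(\sigma_1 \circ \redEmb{S})$ and $(\sigma_2 \circ \redEmb{S})$, so the non-uniqueness of $\tau$ does not affect $\comp{V}{S}$; and the standard fact that \'etale morphisms induce isomorphisms of formal neighbourhoods along compatible sections shows that $\hat\phi$ and $\hat\psi$ are isomorphisms over $S$ with the target $S \times \Ahat{n}$. The composition $\omega_V$ is thus a continuous automorphism of $R\series{n}$, i.e.\ an element of $G(S)$.

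Next I would verify well-definedness with respect to $(\infty)$-equivalence. For \emph{similar} common \'etale neighbourhoods $(V_i, \phi_i, \psi_i)$ via $(W, f_1, f_2)$, the \'etale maps $f_i$ induce isomorphisms $\hat{f}_i : \comp{W}{S} \EquivTo \comp{V_i}{S}$ of formal completions over $S$, so the relations $\phi_1 \circ f_1 = \phi_2 \circ f_2$ and $\psi_1 \circ f_1 = \psi_2 \circ f_2$ yield $\hat\phi_1 \circ \hat f_1 = \hat\phi_2 \circ \hat f_2$ and $\hat\psi_1 \circ \hat f_1 = \hat\psi_2 \circ \hat f_2$, whence $\omega_{V_1} = \omega_{V_2}$. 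For the full $(\infty)$-equivalence (Zariski-local similarity), one uses that formal completion along $\tau(S)$ commutes with Zariski restriction on $S$ and that $G$, being representable by an ind-affine group formal scheme, is in particular a Zariski sheaf; two elements of $G(S)$ agreeing on a Zariski cover therefore coincide.

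For the group homomorphism property, recall from Section \ref{subsec: groupoids of common etale neighbourhoods} that composition in $\prestackVbc{\infty}(S)$ is represented by the fibre product $V_1 \times_{S \times \A{n}} V_2$. Formal completion along $S$ commutes with this fibre product (since the embeddings of $S$ are compatible), so $\omega_{V_1 \times V_2} = \omega_{V_2} \circ \omega_{V_1}$, which matches the composition law in $G(S) = \Aut_{BG_\triv(S)}(S \times G)$; the identity and inverse axioms are immediate from the description of identities (symmetric CENs) and of inverses (mirror diagrams) already verified. Naturality in $S$ reduces to the observation that the pullback of a common \'etale neighbourhood along $f : S_1 \to S_2$ is obtained by base change and that formal completion along a section commutes with such base change, giving $F^{(\infty)}_{S_1} \circ f^* = f^* \circ F^{(\infty)}_{S_2}$.

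The only mildly delicate point is the first one: checking that $\comp{V}{S}$ and the isomorphism $\hat\phi$ are well-defined despite the fact that $\tau$ is only compatible with $\sigma_1$ on $S_\red$. Once this is in place, everything else is a routine unwinding of definitions, and the proposition follows. As a side benefit, the very same construction applied to \emph{strict} common \'etale neighbourhoods lands in $K(S) \subset G(S)$, which is the parallel statement needed in the $\CO$-module setting.
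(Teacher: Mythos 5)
Your proposal is correct and follows essentially the same route as the paper: define $F^{(\infty)}_S$ on the unique object and send a common \'etale neighbourhood to the automorphism of $S \times \Ahat{n}$ obtained from the completed maps, then check well-definedness first for similar neighbourhoods and then Zariski-locally for $(\infty)$-equivalence, and verify compatibility with composition via the fibre product over the middle family. The only differences (your opposite convention $\hat\psi\circ\hat\phi^{-1}$, treating similarity directly rather than via the generating pullback relation, and spelling out naturality in $S$ and the reduced-section subtlety) are cosmetic.
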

\begin{proof}
On objects, we define $F^{(\infty)}_S(S \times \A{n} \rightleftarrows S) \defeq (S \times G \to S)$.

On morphisms, we would like to set $F^{(\infty)}_S\left( [V, \phi, \psi] \right) \defeq \omega_V$ as in the above discussion. We need to show that this is well-defined and respects composition; both of these follow in a straightforward manner from the fact that taking completions of morphisms respects composition. \qed
\end{proof}

\begin{prop} \label{prop: faithful morphism for c}
Let $c \in \BN$. Then we have a natural morphism of prestacks
\begin{align*}
F^\cth: \prestackVc{c} \longrightarrow BG^\cth_\triv.
\end{align*}
\end{prop}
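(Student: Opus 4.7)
The plan is to construct $F^\cth$ by mimicking the construction of $F^{(\infty)}$ in Proposition \ref{prop: faithful morphism}, but replacing formal completions along $S$ with $c$th infinitesimal neighbourhoods. On objects, I would set $F^\cth_S(S \times \A{n} \rightleftarrows S) \defeq (S \times G^\cth \to S)$, the trivial $G^\cth$-bundle.

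On morphisms, given a common \'etale neighbourhood $(V,\phi,\psi)$ representing an automorphism of $S \times \A{n}$ in $\prestackVc{c}(S)$, I would restrict $\phi$ and $\psi$ to the $c$th infinitesimal neighbourhoods $V^{(c)}$ of $S$ in $V$ and $(S \times \A{n})^{(c)}$ of $S$ in $S \times \A{n}$. Because $\phi$ and $\psi$ are \'etale and compatible with the sections on $S_\red$, the induced maps $\phi^{(c)}, \psi^{(c)}: V^{(c)} \to (S \times \A{n})^{(c)}$ are isomorphisms of $S$-schemes, so we obtain an $S$-automorphism
\[
\phi^{(c)} \circ (\psi^{(c)})^{-1}: (S \times \A{n})^{(c)} \EquivTo (S \times \A{n})^{(c)},
\]
which is exactly a continuous automorphism of $R[t_1,\ldots,t_n]/\fm^{c+1}$; I set $F^\cth_S([V,\phi,\psi]) \defeq \omega_V^\cth$, this element of $G^\cth(S)$.

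Well-definedness modulo $(c)$-equivalence is immediate from the definition: by construction two common \'etale neighbourhoods are $(c)$-equivalent precisely when they induce the same automorphism of the $c$th infinitesimal neighbourhood. For compatibility with composition, I would argue as in Proposition \ref{prop: faithful morphism}: the composition of two common \'etale neighbourhoods $(V_1, \phi_1, \psi_1)$ and $(V_2, \phi_2, \psi_2)$ is given by the fibre product $V_1 \times_{\A{n}} V_2$, and taking $c$th infinitesimal neighbourhoods along $S$ commutes with this pullback (since infinitesimal neighbourhoods are defined by the ideal of the section and pullback of ideals along the projections gives the ideal of the diagonal section). The resulting element of $G^\cth(S)$ is then the composition $\omega_{V_2}^\cth \circ \omega_{V_1}^\cth$, as desired. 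Naturality in $S$ is routine since formation of infinitesimal neighbourhoods commutes with base change.

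There is essentially no serious obstacle here: the finite-order setting is in fact cleaner than the $\infty$-case, because $(c)$-equivalence is defined exactly so that the assignment descends to classes of common \'etale neighbourhoods, whereas in the proof of Proposition \ref{prop: faithful morphism} one had to pass through the intermediate relation of similarity and then invoke the local nature of $(\infty)$-equivalence. The only point that requires mild care is verifying that $\phi^{(c)}$ is an isomorphism---but this follows from \'etaleness of $\phi$ together with the fact that $\phi$ is an isomorphism modulo nilpotents on $S$, which forces it to be an isomorphism on any infinitesimal thickening of $S_\red$ inside $V$.
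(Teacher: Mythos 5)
Your construction breaks down at the very first step, and in a way that the paper's own caveats anticipate. In the groupoid $\prestackVc{c}(S)$ the sections of a common \'etale neighbourhood are only compatible with the sections of the trivial family \emph{on $S_\red$}, and the $c$th infinitesimal neighbourhood of a section depends on its ideal, not just on its underlying closed set. Consequently $\phi$ and $\psi$ need not restrict to maps $V^{(c)} \to (S\times\A{n})^{(c)}$ at all: take $V = S\times\A{n}$, $\phi = \id$, and $\psi$ the translation $t_k \mapsto t_k + \epsilon_k$ by nilpotents with $\epsilon_k^{c+1}\neq 0$; then $\psi^\sharp(t_k^{c+1}) = (t_k+\epsilon_k)^{c+1}$ does not lie in $\fm^{c+1}$, so $\psi$ does not carry the $c$th neighbourhood of $\tau(S)$ into that of the zero section. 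Your closing remark only addresses whether $\phi^{(c)}$ is an isomorphism once it exists, not whether it exists, and it is exactly the non-strict (infinitesimal-translation) morphisms --- the ones that distinguish $G$ from $K$ --- for which it fails. A second, related error is the identification of $\Aut_S\bigl((S\times\A{n})^{(c)}\bigr)$ with $G^\cth(S)$: the paper explicitly warns (after Definition \ref{def: G^c}) that $G^\cth$ is \emph{not} the automorphism group of $\On/\fm^{c+1}$, since truncating an automorphism with nilpotent constant terms does not give an automorphism of the truncated ring; an honest automorphism of $R[t_1,\ldots,t_n]/\fm^{c+1}$ forces the constant terms to be (essentially) trivial, so your recipe would at best produce something like a $K^\cth$-point and would lose precisely the translations. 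In short, your argument is the right one for the strict analogue $F^{\prime\cth}: \prestackPVc{c} \to BK^\cth_\triv$ of Proposition \ref{prop: faithful morphism K}, but not for $F^\cth$.

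The paper avoids both problems by not working with finite-order neighbourhoods at the level of schemes: $F^\cth$ is defined exactly as $F^{(\infty)}$ in Proposition \ref{prop: faithful morphism}, by passing to formal completions (which only depend on the underlying closed set of the section, so the $S_\red$-compatibility is harmless), obtaining $\omega_V = \hat\phi\circ\hat\psi^{-1} \in G(S)$, and then composing with the quotient $G(S) \to G^\cth(S)$. Well-definedness is then immediate from the definition of $(c)$-equivalence, which is formulated precisely so that two neighbourhoods are identified when their $\omega_V$'s have the same image in $G^\cth(S)$, and compatibility with composition is the computation already carried out in the proof of Proposition \ref{prop: faithful morphism}, pushed forward along the quotient map. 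You should rewrite your proof along these lines rather than via restriction to $(S\times\A{n})^{(c)}$.
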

\begin{proof}
This morphism is defined analogously to $F^{(\infty)}$; the proof that it is well-defined on morphisms is immediate from the definition of $(c)$-equivalence, and the proof that it respects composition of morphisms is as above. \qed
\end{proof}

Restricting our attention to strict common \'etale neighbourhoods, we obtain the following analogous result:
\begin{prop}\label{prop: faithful morphism K}
For $c \in \BN$, we have morphisms of prestacks
\begin{align*}
F^{\prime \cth}: \prestackPVc{c} \longrightarrow BK^\cth_\triv.
\end{align*}
We also have a morphism
\begin{align*}
F^\prime: \prestackPVc{\infty} \longrightarrow BK_\triv.
\end{align*}
\end{prop}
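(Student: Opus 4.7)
The plan is to mimic the construction of $F^\cth$ and $F^{(\infty)}$ from Propositions \ref{prop: faithful morphism} and \ref{prop: faithful morphism for c}, restricting along strict common \'etale neighbourhoods, and to verify that the resulting automorphism of the formal disc lands in the subgroup $K \subseteq G$ (respectively $K^\cth \subseteq G^\cth$).

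Concretely, on objects I would send the unique object of $\prestackPVc{c}(S)$ (resp.\ $\prestackPVc{\infty}(S)$) to the trivial $K^\cth$-bundle (resp.\ $K$-bundle) $S \times K^\cth \to S$ (resp.\ $S \times K \to S$). On morphisms, given a strict common \'etale neighbourhood $(V, \phi, \psi)$ between $S \times \A{n} \rightleftarrows S$ and itself, I take completions along the sections to obtain $S$-isomorphisms $\hat{\phi}, \hat{\psi}\colon \comp{V}{S} \EquivTo S \times \Ahat{n}$ and set $\omega_V \defeq \hat{\phi} \circ \hat{\psi}^{-1} \in G(S)$, then pass to the relevant quotient for finite $c$. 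The crucial new observation, compared to Propositions \ref{prop: faithful morphism} and \ref{prop: faithful morphism for c}, is the following: because the common \'etale neighbourhood is strict, the sections $\sigma_1 = z$, $\sigma_2 = z$, and $\tau$ are compatible on the nose (not merely after precomposing with $\redEmb{S}$). Therefore the induced automorphism $\omega_V$ fixes the zero section $z\colon S \to S \times \Ahat{n}$ exactly, and hence, by Definition \ref{def: K} (resp.\ Definition \ref{def: K^c}), $\omega_V$ lies in $K(S)$ (resp.\ in the image $K^\cth(S)$ after quotienting).

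Well-definedness on equivalence classes and compatibility with composition then proceed exactly as in the proofs of Propositions \ref{prop: faithful morphism} and \ref{prop: faithful morphism for c}: similarity is handled by the identity $\widehat{\phi \circ f_X} \circ \widehat{\psi \circ f_X}^{-1} = \hat{\phi} \circ \hat{\psi}^{-1}$ using that completion commutes with composition; $\cth$-equivalence (resp.\ $(\infty)$-equivalence, which is locally similarity) is built into the definition of the target group $K^\cth$ (resp.\ handled by a local-to-global argument as before); and composition is handled by the pullback calculation showing $\omega_{V_1 \times_{\A{n}} V_2} = \omega_{V_2} \circ \omega_{V_1}$. Naturality in $S$ is automatic from the functoriality of completion along base change.

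The only genuinely new step beyond recycling the earlier proofs is the verification that $\omega_V \in K(S)$, and this is essentially immediate from the definition of strictness; I would not expect any real obstacle. I would present this as a short proof stating that the construction is identical to that of Propositions \ref{prop: faithful morphism} and \ref{prop: faithful morphism for c}, and then noting the single point about strictness forcing the image to lie in the reduced subgroup.
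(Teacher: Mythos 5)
Your proposal is correct and matches the paper's (largely implicit) argument: the paper constructs $F^{\prime\cth}$ and $F^\prime$ by the same recipe as $F^{\cth}$ and $F^{(\infty)}$, having already noted in the preceding observation that $\omega_V = \hat{\phi}\circ\hat{\psi}^{-1}$ lies in $K(S)$ precisely when the common \'etale neighbourhood is strict, which is exactly your key step. Nothing further is needed.
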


Pulling back along the morphisms of Propositions \ref{prop: faithful morphism} and \ref{prop: faithful morphism for c} gives rise to functors
\begin{align*}
F^*: \Rep(G) \to \QCoh{\varietiesc{\infty}}, \\
F^{\cth,*}: \Rep(G^\cth) \to \QCoh{\varietiesc{c}}.
\end{align*}

In the subsequent sections, we study these functors. We will show that for finite $c$, $F^{\cth}$ is an equivalence of prestacks and hence $F^{\cth,*}$ is an equivalence of categories. On the other hand, we can show only that $F^*$ is a fully faithful embedding, but we give various characterisations of its essential image in Remark \ref{rmk: stack of analytic germs} and Section \ref{sec: convergent and ind-finite universal modules}.

\begin{rmk}[Remark on Harish-Chandra pairs]\label{remark: Harish-Chandra pairs}
Let $\fg$ denote the Lie algebra of $G$; it is equal to the Lie algebra $\Der{\On}$ of $k$-linear derivations of $\On$. The pair $(\fg, K = G_\red)$ forms a \emph{Harish-Chandra pair} (see \cite{BD1}, 2.9.7): $K$ is an affine group scheme; $\fg$ is a Lie algebra with a structure of Tate vector space; we have a continuous embedding $\Lie{K} \emb \fg$ of Lie algebras with open image; and we have an action of $K$ on $\fg$ which is compatible with the action of $\Lie{K}$ coming from the embedding. 

Given a Harish-Chandra pair $(\fg, K)$, we consider the category of $(\fg, K)$-modules: these are algebraic (and hence, by our earlier discussion, discrete) representations $V$  of $K$ equipped with an action of $\fg$ which is compatible with the induced action of $\Lie{K}$. When $K=G_\red$ as in our setting, this category is equivalent to the category of representations of $G$. 

Thus, for $G$ the group of automorphisms of the formal disc, the data of a representation of $G$ on a vector space $V$ is equivalent to the data of a representation of $K$ on $V$ together with a compatible action of $\fg=\Der{\On}$. This motivates one of the main results of this paper: we will see that we can associate to a representation of $K$ and a smooth $n$-dimensional variety $X$ an $\CO$-module $\SF$ on $X$. If our representation is in addition a representation of $G$, this amounts to having a compatible action of $\Der{\On}$, which in turn gives rise to a $\CD$-module structure on $\SF$. 
\end{rmk}

\section{Relative Artin approximation}
\label{sec: relative artin approximation}
In this section, our goal is to show that for finite $c$ the morphisms $F^\cth$ and $F^{\prime\cth}$ from Propositions \ref{prop: faithful morphism for c} and \ref{prop: faithful morphism K} are in fact isomorphisms of prestacks, and hence that we have 
\begin{align*}
BG^{\cth} \simeq \varietiesc{c}, \qquad BK^\cth \simeq \pointedvarietiesc{c}.
\end{align*} 
It suffices to show that the group homomorphisms $F_S^\cth$ and $F^{\prime\cth}_S$ are bijective---that is, given an automorphism of $S \times \Ahat{n}$ over $S$, we need to show that we can lift it to a common \'etale neighbourhood modulo $(c)$-equivalence; moreover we need to show that if the automorphism preserves the zero section $S \to S \times \Ahat{n}$ then we can lift it to a \emph{strict} common \'etale neighbourhood; and finally we need to show that in both cases the lifting is unique up to $(c)$-equivalence. 

\begin{rmk}
When $c = \infty$, the morphisms of prestacks are not isomorphisms: indeed, we will see that the corresponding group homomorphisms are injective, but not surjective. We will be able to use our understanding of these group homomorphisms to introduce yet another stack, the \emph{stack of formal germs of $n$-dimensional varieties}, which will be isomorphic to $BG$. See Remark \ref{rmk: stack of analytic germs}.
\end{rmk}

In \ref{subsec: statement of relative artin approximation}, we state the main result that we will need, which is a relative version of Artin's approximation theorem. The next two sections are devoted to the proof of this result: in \ref{subsec: preliminary material} we recall some important technical definitions and results, and in \ref{subsec: proof of relative artin approximation} we apply them to prove our result. Finally, in \ref{subsec: applications of relative artin approximation} we show how the relative version of Artin's approximation theorem implies that the morphisms $F^\cth$ and $F^{\prime\cth}$ are isomorphisms. 

\subsection{Statement of the main result}
\label{subsec: statement of relative artin approximation}
In the case that $S = \Spec{k}$ is a point, the results that we need follow from a well-known result of Artin:
\begin{thm}[Corollary 2.6, \cite{A}] \label{thm: artin approximation theorem} 
Let $X_1, X_2$ be schemes of finite type over $k$, and let $x_i \in X_i$ be points. Let $\fm_{x_1}$ denote the maximal ideal in the completed local ring $\widehat{\CO}_{X_1, x_1}$, and suppose there is an isomorphism of the formal neighbourhoods
\begin{align*}
\hat{\alpha}: \comp{(X_1)}{x_1} \EquivTo \comp{(X_2)}{x_2}
\end{align*}
over $k$. Then $X_1$ and $X_2$ are \'etale locally isomorphic: i.e., there is a common \'etale neighbourhood $((U,u), \phi, \psi)$ of $(X_i,x_i), i=1,2$.

Moreover, for any $c \in \BN$, we can choose $\phi$ and $\psi$ such that the resulting maps of completions satisfy
\begin{align*}
\hat{\psi} \circ \hat{\phi}^{-1} \equiv \hat{\alpha} \text{ (modulo $\fm_{x_1}^{c+1}$)}.
\end{align*}
\end{thm}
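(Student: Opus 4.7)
The plan is to reduce the statement to Artin's original approximation theorem (Theorem 1.10 of \cite{A}), which handles the case of approximating formal solutions of polynomial equations by algebraic ones in henselian local rings; the existence of the common étale neighbourhood then follows by interpreting the approximated solution geometrically. We may replace $X_2$ by an affine open neighbourhood of $x_2$, so write $X_2 = \Spec B_2$ with $B_2 = k[y_1,\ldots,y_m]/(g_1,\ldots,g_r)$. Let $A_i = \CO_{X_i,x_i}$ with maximal ideals $\fm_i$, let $A_1^h$ be the henselization of $A_1$ at $\fm_1$, and let $\hat A_i$ denote the completions. Composing $B_2 \to A_2 \to \hat A_2$ with the isomorphism $\hat{\alpha}^\#: \hat A_2 \EquivTo \hat A_1$ furnishes a formal solution $\underline{\hat y} = (\hat{\alpha}^\#(y_1),\ldots,\hat{\alpha}^\#(y_m)) \in \hat A_1^m$ to the system $g_j(\underline y) = 0$.

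Fix $c \ge 1$ (the case $c = 0$ follows from the case $c = 1$). Artin's approximation theorem produces an algebraic solution $\underline{\tilde y} \in (A_1^h)^m$ with $\underline{\tilde y} \equiv \underline{\hat y} \pmod{\fm_1^{c+1}}$, i.e.\ a ring map $\tilde\psi^\#: B_2 \to A_1^h$ whose completion agrees with $\hat\alpha^\#$ modulo $\fm_1^{c+1}$. Since $A_1^h = \colim_{(U,u)} \CO_{U,u}$ with the colimit over pointed étale neighbourhoods of $(X_1,x_1)$, and $B_2$ is finitely generated, this map factors through $\CO_{U,u}$ for some such $(U,u)$. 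Spreading out, we obtain a morphism $\psi:U \to X_2$ of schemes; the congruence modulo $\fm_1^{c+1}$ with $c \ge 1$ forces $\psi(u) = x_2$. Together with the étale structure map $\phi: U \to X_1$, this produces the candidate common étale neighbourhood, and by construction $\hat\psi \circ \hat\phi^{-1} \equiv \hat\alpha \pmod{\fm_1^{c+1}}$.

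It remains to verify that $\psi$ is étale at $u$; this is the only step requiring a genuine argument beyond invoking Artin's theorem, and is the main (mild) obstacle. The identification $\hat\CO_{U,u} \cong \hat A_1$ (since $\phi$ is étale) lets us regard $\hat\psi^\#: \hat A_2 \to \hat A_1$ as a map of complete Noetherian local $k$-algebras that agrees with the isomorphism $\hat\alpha^\#$ modulo $\fm_1^{c+1}$ for $c \ge 1$. In particular, $\hat\psi^\#$ induces an isomorphism on cotangent spaces, hence is surjective by the complete Nakayama lemma, and since $\hat A_1$ and $\hat A_2$ have the same Krull dimension (being abstractly isomorphic via $\hat\alpha^\#$), any surjection between them is an isomorphism. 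Thus $\psi$ is formally étale at $u$, and by standard openness of the étale locus we may shrink $U$ so that $\phi$ and $\psi$ are both étale on all of $U$, yielding the desired common étale neighbourhood.
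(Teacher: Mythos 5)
Your proposal is essentially correct, but it takes a different route from the paper. The paper does not prove this statement at all: it quotes it as Corollary 2.6 of \cite{A}, and the proof it does supply is of the relative generalisation (Proposition \ref{prop: relative artin approximation theorem}), argued functorially --- one applies Proposition \ref{prop: locally finite presentation} (Artin's Proposition 2.3) to a suitable Hom-functor to see it is locally of finite presentation, then Proposition \ref{thm: artin} (Artin's Corollary 2.2) to approximate the formal morphism by an honest morphism defined over an \'etale neighbourhood, and finally checks \'etaleness and compatibility with the sections. You instead reduce directly to the equational form of Artin approximation over the henselization $A_1^h$ (Theorem 1.10 of \cite{A}): present $B_2$ by generators and relations, approximate the formal solution supplied by $\hat\alpha^\#$, and factor the resulting algebraic solution through some \'etale neighbourhood $(U,u)$ of $(X_1,x_1)$. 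For the absolute statement this is a perfectly good, more hands-on derivation; the functorial packaging is what lets the paper redo the argument over a base scheme $S$, where one approximates an $S$-morphism $S\times\hat Y\to X_2$ rather than a solution of equations in a complete local ring.

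One step needs repair. Having shown (correctly, from the agreement with $\hat\alpha^\#$ modulo $\fm_1^{c+1}$ with $c\ge 1$, the cotangent-space isomorphism, and completeness) that $\hat\psi^\#:\hat A_2\to\hat A_1$ is surjective, you conclude it is injective because ``any surjection between rings of the same Krull dimension is an isomorphism''. That principle is false: $k[x]/(x^2)\to k$ is a surjection of zero-dimensional local rings that is not injective, and since $X_1,X_2$ are not assumed smooth or even reduced you cannot exclude such behaviour on dimension grounds alone. The abstract isomorphism $\hat\alpha^\#$ must be used more seriously: the composite $(\hat\alpha^\#)^{-1}\circ\hat\psi^\#$ is a surjective endomorphism of the Noetherian ring $\hat A_2$ (indeed it is congruent to the identity modulo $\fm_2^{2}$, hence surjective by Nakayama and completeness), and a surjective endomorphism of a Noetherian ring is automatically injective, since the chain of kernels of its iterates stabilises. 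Hence $\hat\psi^\#$ is an isomorphism, and the remainder of your argument (isomorphism on completed local rings at $u$ gives flat and unramified, hence \'etale at $u$, then shrink $U$ using openness of the \'etale locus) goes through. With this one-line fix the proof is complete.
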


We are interested in the relative setting: $\pi_i\!: X_i \rightleftarrows S:\! \sigma_i\ ( i=1,2)$ are pointed $n$-dimensional families, and we ask when an isomorphism of the formal completions $\comp{(X_i)}{S}$ can be lifted to an actual morphism of schemes, at least \'etale locally. We are not able to prove a relative version of Theorem \ref{thm: artin approximation theorem} in full generality; however, we can show that it does hold when $X_1$ is a product $S \times Y$ for $Y$ any $n$-dimensional $k$-variety, and $\sigma_1$ is a constant section. This suffices for the applications we have in mind.

Therefore let us fix $S$ an affine scheme over $k$, and let $Y$ be a smooth $n$-dimensional variety over $k$, with $y \in Y$ some fixed point. Then we can form a pointed $n$-dimensional family $\pi_1: S \times Y \rightleftarrows S : \sigma_1$, where $\pi_1$ is the first projection, and $\sigma_1= \id_S \times i_y$ is induced by the inclusion of the point $y$ in $Y$. Let $\hat{Y} \defeq \comp{Y}{y}$ denote the completion of $Y$ at the point $y$, and note that $\comp{(S \times Y)}{S} \simeq S \times \hat{Y}$.

\begin{prop}[Relative Artin Approximation] 
\label{prop: relative artin approximation theorem}
Let $(\pi_2: X_2 \rightleftarrows S: \sigma_2)$ be any pointed $n$-dimensional family, and suppose that we have an isomorphism $\hat{\alpha}: S \times \hat{Y} \EquivTo \comp{(X_2)}{S}$ preserving both the projections to $S$ and the embeddings of $S$. Then there exists some affine \'etale neighbourhood $(U,u)\xrightarrow\phi (Y, y)$ that gives a strict split common \'etale neighbourhood of the $S$-families of $n$-dimensional varieties as follows:
\begin{equation}\label{diagram for relative artin approximation}
\begin{tikzpicture}[>=angle 90, baseline=(current bounding box.center)]
\matrix(c)[matrix of math nodes, row sep=2em, column sep=2em, text height=1.5ex, text depth=0.25ex]
{    &  V  & \\
 S \times Y && X_2 \\
  & S, & \\};
\path[->, font=\scriptsize]
(c-1-2) edge node[above left]{$\phi_S$} (c-2-1)
(c-1-2) edge node[above right]{$\psi_S$} (c-2-3)
(c-2-1) edge[bend left=10] (c-3-2)
(c-3-2) edge[bend left=15] (c-2-1)
(c-2-3) edge[bend right=10] (c-3-2)
(c-3-2) edge[bend right=15] (c-2-3)
(c-1-2) edge[bend left=10] node[right]{$\rho$} (c-3-2)
(c-3-2) edge[bend left=10] node[left]{$\tau$} (c-1-2);
\end{tikzpicture}
\end{equation}
where $V \subset S \times U$ is a Zariski open subset containing $S \times \left\{u\right\}$,  $\phi_S$ is the restriction of $\id_S \times \phi$ to $V$, and the section $\tau: S \hookrightarrow V$ is induced by the inclusion $i_u$ of the point $u$ in $U$. 

Furthermore, for any $c \in \BN$ this common \'etale neighbourhood can be chosen such that when we take completions along the closed embeddings of $S$, 
\begin{align}\label{compatibility with alpha}
\hat{\psi_S} \circ \hat{\phi_S}^{-1} \equiv \hat{\alpha} \text{ (modulo $\fm_S^{c+1}$)}.
\end{align}
(Here $\fm_S \subset \CO_{X_1}$ is the ideal sheaf corresponding to the closed embedding $\sigma_1: S \emb X_1$.) 
\end{prop}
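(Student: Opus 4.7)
The plan is to reduce the statement to a direct application of a relative form of Artin's approximation theorem (via Popescu's N\'eron--Popescu desingularization, or equivalently by descending to the finitely generated setting and applying Artin's original result). First I would choose coordinates: shrink $Y$ to an affine open $\Spec(B)$ containing $y$ and pick an \'etale map $Y \to \mathbb{A}^n$ sending $y$ to $0$, inducing $\widehat{\mathcal{O}}_{Y,y} \cong k[[t_1,\ldots,t_n]]$. Shrink $X_2$ to an affine open $\Spec(A_2)$ containing $\sigma_2(S)$, where $A_2 = R[x_1,\ldots,x_m]/(P_1,\ldots,P_l)$ is finitely presented over $R=\Gamma(S,\mathcal{O}_S)$; after subtracting constants I may assume $\sigma_2^*(x_i)=0$ for all $i$.

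Next I would translate $\hat\alpha$ into formal power series data: the map $\hat\alpha^*: \widehat{A}_2\to R[[t_1,\ldots,t_n]]$ is determined by $\hat a_i := \hat\alpha^*(x_i) \in (t_1,\ldots,t_n)\cdot R[[t_1,\ldots,t_n]]$, and these satisfy $P_j(\hat a)=0$ for each $j$. That $\hat\alpha$ is an isomorphism amounts to the additional requirement that some $n\times n$-minor of the Jacobian $(\partial \hat a_i/\partial t_k)$ be a unit in $R[[t_1,\ldots,t_n]]$; this can be encoded as a polynomial equation via an auxiliary variable for the inverse.

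Then I would invoke relative Artin approximation for the pair $(R[t_1,\ldots,t_n],(t_1,\ldots,t_n))$, whose $(t)$-adic completion is $R[[t_1,\ldots,t_n]]$. Since $R$ is a $k$-algebra the completion map is geometrically regular, so Popescu's theorem yields, for any $c\in\mathbb{N}$, elements $a_i$ in the henselization $A^h := R[t_1,\ldots,t_n]^h_{(t)}$ with $P_j(a)=0$, Jacobian minor still a unit, and $a_i\equiv \hat a_i\pmod{(t)^{c+1}}$. (Alternatively one may descend the finite data---the $P_j$, the low-degree coefficients of $\hat a_i$, and the Jacobian witness---to a finitely generated hence excellent $k$-subalgebra of $R$ and apply Artin's original theorem.) Since $A^h$ is the filtered colimit of $\mathcal{O}(V)$ over pointed \'etale neighbourhoods $V$ of $S\hookrightarrow S\times\mathbb{A}^n$, the $a_i$ come from some such $V$; by cofinality I may arrange $V\subset S\times U$ for $U$ an \'etale neighbourhood of both $(Y,y)$ and $(\mathbb{A}^n,0)$. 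The $a_i$ then define an $R$-algebra map $A_2\to\mathcal{O}(V)$, i.e.\ the desired $\psi_S:V\to X_2$, while $\phi_S$ is the restriction of $\id_S\times(U\to Y)$.

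Finally I would verify the remaining conditions: \'etaleness of $\psi_S$ near $\tau(S)$ follows from the preserved Jacobian unit condition and extends to all of $V$ after shrinking; strictness $\psi_S\circ\tau=\sigma_2$ holds because $a_i|_{t=0}=\hat a_i|_{t=0}=\sigma_2^*(x_i)=0$ exactly, not merely modulo nilpotents; the compatibility \eqref{compatibility with alpha} is immediate from $a_i\equiv \hat a_i\pmod{(t)^{c+1}}$. I expect the main technical obstacle to be the correct invocation of Artin approximation when $R$ is an arbitrary affine $k$-algebra, which is handled either via Popescu's theorem or by the descent to an excellent subring, both requiring routine but careful bookkeeping.
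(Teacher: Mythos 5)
Your route (coordinates, equations, and approximation for the henselian pair $(R[t_1,\ldots,t_n],(t))$ over the base $S\times\mathbb{A}^n$) is genuinely different from the paper's, which never invokes any approximation property of the base ring $R=\Gamma(S,\mathcal{O}_S)$: the paper packages the problem into the functor $F(T)=\Hom_S(S\times T,X_2)$ on schemes over $Y$, checks via Proposition \ref{prop: locally finite presentation} that $F$ is locally of finite presentation, and then applies Artin's Corollary 2.2 (Proposition \ref{thm: artin}) over the finite-type variety $Y$ alone, so only excellence of $\mathcal{O}_{Y,y}$ is ever used. In your version the key input is unavailable in the required generality: $S$ is an arbitrary affine test scheme, so $R$ need not be Noetherian, and the assertion that ``since $R$ is a $k$-algebra the completion map is geometrically regular'' is unjustified --- Popescu's theorem and the regularity of $A^h\to\widehat{A}$ need $A$ Noetherian and a $G$-ring, which is meaningless for non-Noetherian $R$ and not automatic even for Noetherian $k$-algebras in characteristic zero. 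Your fallback of descending to a finitely generated subalgebra $R_0\subset R$ does not repair this: the equations $P_j$ descend, but the formal solution $\hat{a}_i\in R[[t]]$ involves infinitely many coefficients in $R$ and need not lie in $R_0[[t]]$ for any finitely generated $R_0$, so there is no formal solution over an excellent ring to feed into Artin's original theorem. Handling exactly this is what the ``locally of finite presentation'' step in the paper is for: it descends the formal datum to a finite-type situation over $Y$ before any approximation is performed.

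There is a second gap even if one grants approximation for the pair: the solution $(a_i)$ is then only known to lie in some \'etale neighbourhood of $S$ in $S\times\mathbb{A}^n$, which in general mixes the $S$- and disc-directions, and your ``by cofinality'' passage to a split neighbourhood $V\subset S\times U$ with $U$ \'etale over $Y$ is false. For instance, with $R=k[r]$ the standard \'etale neighbourhood of the pair obtained from $R[t,x]/(x^2-1-rt)$ by inverting $x(x+1)$ admits no $R[t]$-algebra map to any $\mathcal{O}(V)$ with $V\subset S\times U$ open and $U$ \'etale over $\mathbb{A}^1_t$: such a map would make $1+rt$ a square in $k(U)(r)$, which is impossible since $1+rt$ has degree one in $r$. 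The closest statement in the paper, Lemma \ref{lemma: ubiquity of split neighbourhoods} (and even that is only Zariski-local on $S$), is itself deduced from Proposition \ref{prop: relative artin approximation theorem}, so it cannot be cited here. In the paper's argument the split form comes for free precisely because the approximation is carried out over $Y$: it directly produces an \'etale neighbourhood $(U,u)\to(Y,y)$ and a morphism $\xi'\colon S\times U\to X_2$ over $S$, which is then checked to be \'etale on an open $V\supset S\times\{u\}$ and strictly compatible with the sections. Your remaining verifications (strictness from $a_i\equiv\hat{a}_i$ modulo $(t)$, the congruence \eqref{compatibility with alpha}, and \'etaleness near the section from the Jacobian condition) are fine in spirit, but the two steps above are the heart of the matter and are not established by the proposal.
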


The proof is very similar to the original proof of Theorem \ref{thm: artin approximation theorem} in \cite{A}; however we will give the generalisation explicitly below, in particular to demonstrate the equality (\ref{compatibility with alpha}), which is only implicit in \cite{A}. Both proofs rely on the notion of a functor locally of finite presentation, which we introduce in the next section.

\subsection{Preliminary material}
\label{subsec: preliminary material}
\begin{defn}
Let $Y$ be a scheme of finite type over $k$. A functor 
\begin{align*}
F: (\Sch_{/Y})^\op \to \Set
\end{align*}
is said to be \emph{locally of finite presentation} if it maps filtered limits of affine schemes over $Y$ to colimits of sets. That is, if $I$ is a filtered index category and $\left\{Y_i\right\}_{i\in I}$ is a diagram of affine schemes over $Y$, then 
\begin{align*}
\colim_{i \in I}{F(Y_i)} \simeq F(\limit_{i \in I}{Y_i}).
\end{align*}
\end{defn}

The following proposition gives a useful class of functors which are locally of finite presentation:
\begin{prop}[Proposition 2.3, \cite{A}]\label{prop: locally finite presentation}
Let
\begin{center}
\begin{tikzpicture}[>=angle 90]
\matrix(a)[matrix of math nodes, row sep=1em, column sep=1em, text height=1.5ex, text depth=0.25ex]
{Y_1 &   & Y_2\\
     & Z &    \\
     & X &    \\};
\path[->, font=\scriptsize]
 (a-1-1) edge (a-2-2)
 (a-1-3) edge (a-2-2)
 (a-2-2) edge (a-3-2);
\end{tikzpicture}
\end{center}
be a diagram of schemes over $X$ with $Z$ quasi-compact and quasi-separated, and $Y_i$ of finite presentation over $Z$ ($i=1,2$). Let $\Hom_Z(Y_1,Y_2)$ denote the functor:  
\begin{align*}
(\Sch_{/X})^\op &\to \Set\\
 T &\mapsto \Hom_{Z\times_X T}(Y_1 \times_X T, Y_2 \times_X T).  
\end{align*}
This functor is locally of finite presentation. 
\end{prop}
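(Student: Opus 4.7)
The plan is to reduce to the case where $Z$, $Y_1$, and $Y_2$ are all affine, and then to exploit the explicit description of morphisms out of a finitely presented algebra in terms of generators and relations. The whole argument rests on the fact that filtered colimits of sets commute with finite limits, so that gluing data built from finitely many affine pieces behaves well under filtered limits of test schemes.

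First I would handle the affine case. Assume $X = \Spec R$, $Z = \Spec A$, $Y_1 = \Spec B_1$, $Y_2 = \Spec B_2$, and suppose $T = \lim T_i$ with $T_i = \Spec C_i$ and $T = \Spec C$, so that $C = \colim C_i$. A morphism $Y_1 \times_X T \to Y_2 \times_X T$ over $Z \times_X T$ is an $(A \otimes_R C)$-algebra map $\phi \colon B_2 \otimes_R C \to B_1 \otimes_R C$. Write $B_2 = A[x_1, \ldots, x_n]/(f_1, \ldots, f_m)$, using finite presentation of $Y_2/Z$. Then $\phi$ is determined by the $n$ images $\phi(x_k \otimes 1) \in B_1 \otimes_R C = \colim_i (B_1 \otimes_R C_i)$. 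Each image lies in some $B_1 \otimes_R C_{i_k}$, and after passing to a common index $i$ all images are defined over $C_i$; the finitely many polynomial relations $f_j = 0$ hold in the filtered colimit, hence already in $B_1 \otimes_R C_{i'}$ for some $i' \ge i$. This produces a map defined over $C_{i'}$, giving surjectivity of $\colim_i \Hom \to \Hom$; injectivity is an analogous argument, checking vanishing of differences of generators in the filtered colimit.

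For the general case, I would use quasi-compactness and quasi-separatedness of $Z$ to choose a finite affine open cover $\{Z_\alpha\}$ of $Z$ whose pairwise intersections $Z_{\alpha\beta}$ are themselves covered by finitely many affines, and then further cover each $Y_i \times_Z Z_\alpha$ (qcqs, by finite presentation of $Y_i/Z$) by finitely many affine opens of finite presentation over $Z_\alpha$. A morphism $Y_1 \times_X T \to Y_2 \times_X T$ over $Z \times_X T$ is then equivalent to a finite collection of morphisms between the affine pieces on the source side, satisfying gluing conditions on the finitely many intersections and landing in appropriate affine pieces of $Y_2$. This realises $\Hom_Z(Y_1, Y_2)$ as a finite limit of $\Hom$ functors between affine schemes of finite presentation over an affine base; combined with the affine case and the commutation of finite limits with filtered colimits in $\mathrm{Set}$, this yields the general statement.

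The main technical nuisance is the gluing step: a morphism from an affine piece of $Y_1 \times_Z Z_\alpha$ into $Y_2$ need not land in a single affine chart of $Y_2$, so one has to pull back the cover of $Y_2$ and refine on the source, still keeping everything finite. Controlling this refinement is precisely where the quasi-compactness and quasi-separatedness of $Z$, together with finite presentation of the $Y_i$, become essential, and is the place where the argument has to be assembled most carefully.
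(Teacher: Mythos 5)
The paper does not prove this statement at all: it is quoted directly from Artin \cite{A} (Proposition 2.3), whose own justification is in turn essentially a citation of the spreading-out machinery of EGA IV, \S 8. So your proposal is being measured against that standard argument rather than against anything in the paper. Your affine case is exactly that argument and is correct: writing $B_2$ as a finitely presented $A$-algebra and chasing the finitely many generators and relations through $B_1\otimes_R C=\colim_i(B_1\otimes_R C_i)$ gives both surjectivity and injectivity of $\colim_i F(T_i)\to F(T)$.

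The gap is in the globalization step. The assertion that the qcqs hypotheses let you realise $\Hom_Z(Y_1,Y_2)$ as a \emph{finite limit} of Hom functors between affines is not correct as stated: which affine chart of $Y_2\times_X T$ receives a given affine piece of $Y_1\times_X T$ depends on the morphism being considered (and on $T$), so the required refinement of the source cover cannot be fixed in advance, and there is no single finite diagram, independent of $T$, whose limit computes the functor. Consequently the appeal to commutation of finite limits with filtered colimits does not apply directly. The standard repair --- and the actual content of EGA IV 8.14.2 --- is to spread out each morphism individually: pull back the finite affine cover of $Y_2$ along the given morphism over $T$, refine the resulting open cover of each (quasi-compact) affine chart of $Y_1\times_X T$ by finitely many principal opens, descend these opens together with the condition that they cover to some finite stage $T_i$, apply your affine case chart by chart, and then descend the agreement of the resulting maps on the overlaps, which are quasi-compact by quasi-separatedness, after a further refinement so that both maps land in a common chart of $Y_2$; injectivity is handled by the same chartwise comparison. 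You flag this refinement problem yourself but leave it unresolved, and since it is precisely where the quasi-compactness, quasi-separatedness and finite-presentation hypotheses do their work, the proof is incomplete without it --- though the missing steps are standard and your overall strategy is the right one.
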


Now we give a proposition illustrating the usefulness of functors locally of finite presentation.

\begin{prop}[Corollary 2.2, \cite{A}]\label{thm: artin}
Fix a base scheme $Y$ over $k$, choose a point $y \in Y$, and let $\fm_y$ denote the maximal ideal of the completed local ring $\widehat{\CO}_{Y,y}$. Let $F: (\Sch_{/Y})^\op \to \Set$ be a contravariant functor locally of finite presentation, and assume we have $\hat{\xi} \in F(\hat{Y})$. Then for any $c\in \BN$, there exists an \' etale neighbourhood $(U,u)$ of $y$ in $Y$, and an element $\xi^\prime \in F(U)$ such that 
\begin{align}\label{artin-congruence}
\xi^\prime \equiv \hat{\xi} \text{ (modulo $\fm_y^{c+1}$)}.
\end{align}
\end{prop}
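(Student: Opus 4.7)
The plan is to adapt Artin's original strategy from the proof of Theorem~\ref{thm: artin approximation theorem} to the relative setting. The core idea is to construct an auxiliary functor $F \colon (\Sch_{/Y})^\op \to \Set$ that is locally of finite presentation and that realises $\hat\alpha$ as a formal element, so that Proposition~\ref{thm: artin} yields the desired \'etale-local approximation.

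First I would define $F(T) \defeq \Hom_S(S \times_k T, X_2)$ for $T \to Y$, and verify that $F$ is locally of finite presentation by applying Proposition~\ref{prop: locally finite presentation} with $X = Y$, $Z = S \times Y$, $Y_1 = Z$, and $Y_2 = X_2 \times_k Y$ (viewed as a $Z$-scheme via $\pi_2 \times \id_Y$). A direct calculation with fibre products identifies the resulting hom-set $\Hom_{Z \times_X T}(Y_1 \times_X T, Y_2 \times_X T)$ with $\Hom_S(S \times T, X_2)$, and the finite-presentation and quasi-compactness hypotheses all hold since $\pi_2$ is smooth of finite presentation and $S$, $Y$ are quasi-compact and quasi-separated.

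Next I would exhibit $\hat\alpha$ as an element $\hat\xi \in F(\hat Y)$: composing $\hat\alpha$ with the canonical inclusion $\comp{(X_2)}{S} \hookrightarrow X_2$ of formal schemes and using that $X_2$ is of finite type over $S$, one obtains an $S$-morphism of honest schemes $S \times \Spec \widehat{\CO}_{Y,y} \to X_2$. Given $c \in \BN$, I would then apply Proposition~\ref{thm: artin} with approximation order $c' \defeq \max(c, 1)$ to produce an \'etale neighbourhood $\phi \colon (U, u) \to (Y, y)$ and an element $\xi' \in F(U) = \Hom_S(S \times U, X_2)$ satisfying $\xi' \equiv \hat\xi \pmod{\fm_y^{c' + 1}}$.

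Finally, I would set $\psi'_S \defeq \xi'$ and $\phi'_S \defeq \id_S \times \phi$. The zeroth-order part of the congruence forces $\psi'_S \circ (\id_S \times i_u) = \sigma_2$, which gives strictness; the first-order part (available because $c' \ge 1$) implies that the differential of $\psi'_S$ along $S \times \{u\}$ coincides with the invertible differential of $\hat\alpha$, so by the Jacobian criterion $\psi'_S$ is \'etale at each point of $S \times \{u\}$ and hence \'etale on a Zariski open neighbourhood $V \subset S \times U$ containing $S \times \{u\}$. Observing that $\phi'_S$ is already \'etale (as the base change of $\phi$), the triple $(V, \phi_S, \psi_S) \defeq (V, \phi'_S|_V, \psi'_S|_V)$ together with the section $\tau \defeq (\id_S \times i_u)|_V$ yields a strict split common \'etale neighbourhood of the required form. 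The congruence $\hat{\psi_S} \circ \hat{\phi_S}^{-1} \equiv \hat\alpha \pmod{\fm_S^{c+1}}$ then follows from $\xi' \equiv \hat\xi \pmod{\fm_y^{c+1}}$, using that $\hat{\phi_S}$ restricts to the canonical identification $S \times U^{(c)} \cong S \times Y^{(c)}$ coming from the \'etaleness of $\phi$, together with the scalar-extension identity $\fm_y^{c+1} \cdot \CO_{S \times Y, S} = \fm_S^{c+1}$. The main technical obstacle lies in this last bookkeeping step---matching the ideal $\fm_y$ on $Y$ with the ideal sheaf $\fm_S$ on $S \times Y$ and correctly tracking the behaviour of $\hat{\phi_S}^{-1}$ at order $c$.
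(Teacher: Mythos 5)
There is a fundamental mismatch here: what you have outlined is not a proof of Proposition~\ref{thm: artin} at all. Proposition~\ref{thm: artin} is Artin's approximation theorem in functorial form (Corollary 2.2 of \cite{A}): for \emph{any} contravariant functor $F$ locally of finite presentation, a formal element $\hat{\xi} \in F(\hat{Y})$ can be approximated to order $c$ by an element $\xi' \in F(U)$ over an \'etale neighbourhood $(U,u)$ of $(Y,y)$. Your argument instead constructs the specific functor $T \mapsto \Hom_S(S \times T, X_2)$, checks via Proposition~\ref{prop: locally finite presentation} that it is locally of finite presentation, and then \emph{applies Proposition~\ref{thm: artin}} to it to extract $(U,u)$ and $\xi'$, before assembling the strict split common \'etale neighbourhood and verifying the congruence $\hat{\psi}_S \circ \hat{\phi}_S^{-1} \equiv \hat{\alpha}$. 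In other words, you have written a proof of the \emph{relative} approximation statement, Proposition~\ref{prop: relative artin approximation theorem} (and, as such, it follows essentially the same route as the paper's Section~\ref{subsec: proof of relative artin approximation}), while taking the statement you were asked to prove as a black box. As a proof of Proposition~\ref{thm: artin} this is circular and contains no content.

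Note also that Proposition~\ref{thm: artin} is not the kind of statement one can expect to prove by the elementary manipulations in your outline: it is the heart of Artin's approximation machinery. The paper itself does not reprove it but quotes it from \cite{A}, where it is deduced from the approximation theorem for systems of polynomial equations over the Henselization of $\CO_{Y,y}$ (equivalently, from the fact that this Henselization sits ``densely'' in $\widehat{\CO}_{Y,y}$ with respect to the $\fm_y$-adic topology, a statement whose proof uses N\'eron desingularization/induction on the number of variables). If your task is genuinely to supply a proof of Proposition~\ref{thm: artin}, you would need to reproduce (or cite) that argument: express $\hat{\xi}$ as a filtered colimit datum using local finite presentation of $F$, reduce to approximating a solution in $\widehat{\CO}_{Y,y}$ of a finite system of polynomial equations by a solution in an \'etale neighbourhood, and then invoke the polynomial approximation theorem. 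None of this appears in your proposal; conversely, the construction you did carry out duplicates the paper's proof of Proposition~\ref{prop: relative artin approximation theorem}, including the choice of auxiliary functor, the strictness verification via the factorisation of $i_u$ through $Y^{(c)}_y$, and the \'etaleness of $\xi'$ near $S \times \{u\}$, so it adds nothing new there either.
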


Here the congruence (\ref{artin-congruence}) is interpreted as follows: since $U$ is an \' etale neighbourhood of $Y$, we have a canonical morphism $\epsilon_{U}: \hat{Y} \to  U,$ inducing a function $F(U) \to F(\hat{Y})$. The content of (\ref{artin-congruence}) is that the images of $\xi^\prime$ and $\hat{\xi}$ agree after applying the canonical function
\begin{align*}
F(\hat{Y}) \to F(Y^{(c)}_y),
\end{align*}
where $Y^{(c)}_y$ denotes the $c$th infinitesimal neighbourhood of $y$ in $Y$. 

With this result in mind, we can prove Proposition \ref{prop: relative artin approximation theorem}.
\subsection{Proof of Proposition \ref{prop: relative artin approximation theorem}}
\label{subsec: proof of relative artin approximation}
Recalling the notation from the statement of the proposition, we define a functor
\begin{align*}
F: &(\Sch_{/Y})^\op \to \Set\\
 &T \mapsto \Hom_S(S\times T, X_2).
\end{align*}
Note that 
\begin{align*}
\Hom_S(S \times T, X_2) &\simeq \Hom_{S \times T}(S \times T, X_2 \times T)\\
 &\simeq \Hom_{(S \times Y) \times_Y T}\left( (S \times Y) \times _Y T, (X_2 \times Y) \times_Y T \right).
\end{align*}
Therefore, applying Proposition \ref{prop: locally finite presentation} to the diagram
\begin{center}
\begin{tikzpicture}[>=angle 90]
\matrix(a)[matrix of math nodes, row sep=1em, column sep=1em, text height=1.5ex, text depth=0.25ex]
{S \times Y &   & X_2 \times Y\\
     & S \times Y &    \\
     & Y, &    \\};
\path[->, font=\scriptsize]
 (a-1-1) edge (a-2-2)
 (a-1-3) edge (a-2-2)
 (a-2-2) edge (a-3-2);
\end{tikzpicture}
\end{center}
we conclude that $F$ is locally of finite presentation. 

In particular, $F(\hat{Y}) = \Hom_S(S \times \hat{Y}, X_2)$, and we have an element $\hat{\xi} \in F(\hat{Y})$ given by the composition:
\begin{align*}
S \times \hat{Y} \xrightarrow{\hat{\alpha}} \comp{(X_2)}{S} \xhookrightarrow{\epsilon_{X_2}} X_2.
\end{align*}

Now we apply Proposition \ref{thm: artin} and conclude that there exists an \'etale neighbourhood $\phi: (U,u) \to (Y, y)$ and an element $\xi^\prime \in F(U)$ approximating $\hat{\xi}$ modulo $\fm_y^{c+1}$. The element $\xi^\prime$ corresponds to a diagram of $S$-schemes:
\begin{center}
\begin{tikzpicture}[>=angle 90]
\matrix(f)[matrix of math nodes, row sep=2em, column sep=2em, text height=1.5ex, text depth=0.25ex]
{S \times U & & X_2\\
& S. & \\};
\path[->, font=\scriptsize]
(f-1-1) edge node[above]{$\xi^\prime$} (f-1-3)
(f-1-1) edge (f-2-2)
(f-1-3) edge (f-2-2);
\end{tikzpicture}
\end{center}

We can find an open neighbourhood $V$ of $S \times \left\{u\right\}$ in $S \times U$ such that $\xi^\prime$ is \'etale on $V$:  indeed, we know that $\xi^\prime$ induces an isomorphism $\comp{(S \times U)}{S} \simeq S \times \comp{U}{u} \xrightarrow{\sim} \comp{(X_2)}{S}$ because it agrees with the isomorphism $\hat{\alpha}$ on the $c$th infinitesimal neighbourhood. Therefore, for each $s \in S$, $\xi^\prime$ induces an isomorphism $\comp{(S \times U)}{(s,u)} \xrightarrow{\sim} \comp{(X_2)}{(\xi^\prime(s,u))}$, and so $\xi^\prime$ must be \'etale in some neighbourhood of $(s,u)$, since $\xi^\prime$ is locally of finite presentation. 

Having fixed such a neighbourhood $V$, we have a candidate for the left side of the diagram (\ref{diagram for relative artin approximation}) immediately:
\begin{center}
\begin{tikzpicture}[>=angle 90]
\matrix(e)[matrix of math nodes, row sep=2em, column sep=4em, text height=1.5ex, text depth=0.25ex]
{ & V\\
S \times Y&\\
& S. \\};
\path[->, font=\scriptsize]
(e-1-2) edge node[above,sloped]{$\phi_S = (\id_S \times \phi)|_{V}$} (e-2-1)
(e-2-1) edge[bend left=10] (e-3-2)
(e-3-2) edge[bend left=10] (e-2-1)
(e-1-2) edge[bend left=10] (e-3-2)
(e-3-2) edge[bend left=10] (e-1-2);
\end{tikzpicture}
\end{center}
Indeed, it is clear that $\phi_S$ is \'etale and respects the sections and the projections. 

To complete the right side of the diagram, it remains to show that the restriction $\psi_S$ of $\xi^\prime$ to $V$ commutes with the sections, i.e.
\begin{align*}
\xi^\prime \circ (\id_{S} \times i_u) = \sigma_2.
\end{align*}
Observe first that since $(U,u)$ is an \'etale neighbourhood of $(Y,y)$ we have a canonical morphism $\epsilon_U: \hat{Y} \to U$. Moreover, the fact that $\xi^\prime \equiv \hat{\xi}$ (mod $\mathfrak{m}^{c+1}$) amounts to the commutativity of the following diagram:
\begin{center}
\begin{tikzpicture}[>=angle 90]
\matrix(g)[matrix of math nodes, row sep=2em, column sep=2em, text height=2ex, text depth=0.25ex]
{                                     &S \times \hat{Y}&                               & S \times U &       \\
 S \times Y_{y}^{(c)}  &                             &                               &                         & X_2   \\
                                      &                             &  S \times \hat{Y}&                         &       \\};
\path[->, font=\scriptsize]
 (g-2-1) edge node[above, sloped]{$ \id_{S} \times \lambda_c$} (g-1-2)
 (g-1-2) edge node[above]{$ \id_{S} \times \epsilon_U $} (g-1-4)
 (g-1-4) edge node[above right]{$ \xi^\prime $} (g-2-5)

 (g-2-1) edge node[below, sloped]{$ \id_{S} \times \lambda_c$} (g-3-3)
 (g-3-3) edge node[below right]{$ \hat{\xi} $} (g-2-5);
\end{tikzpicture}
\end{center}
Now the result follows easily, once we note that the inclusion $i_u : \pt \emb U$ factors through the inclusion of the point $y$ in its $c$th infinitesimal neighbourhood $Y_{y}^{(c)}$ and its formal neighbourhood $\hat{Y}$ via the map $\epsilon_U$. Indeed, we have
\begin{align*}
\xi^\prime \circ \left(\id_S\times i_u\right) & = \xi^\prime \circ \left(\id_S \times (\epsilon_U \circ \lambda_c) \right) \circ \left(\id_S \times i_y^{(c)} \right) \\
 & = \hat{\xi} \circ \left(\id_{S} \times \lambda_c \right) \circ \left(\id_S \times i_y^{(c)} \right)\\
 & = \hat{\xi} \circ \left(\id_{S} \times \hat{i_y}\right) \\
 & = \epsilon_{X_2} \circ \hat{\alpha} \circ (\id_S \times \hat{i_y}) \\
 & = \epsilon_{X_2} \circ \hat{\sigma}_2\\
 & = \sigma_2.
\end{align*}

Finally, we have to check that $\hat{\psi}_S \circ \hat{\phi}_S^{-1} \equiv \hat{\alpha}$. Since $V$ is open in $S\times U$, it suffices to show that $\hat{\xi}^\prime \circ (\id_S \times \hat{\phi}^{-1}) \equiv \hat{\alpha}$. For this, we again use the compatibility of $\xi^\prime$ and $\hat{\alpha}$, which tells us that the following two compositions are equal:
\begin{align*}
S \times Y^{(c)}_y \to S \times \hat{Y} \xrightarrow{ \id_{S} \times \epsilon_U } S \times U \xrightarrow{\xi^\prime} X_2
\end{align*}
and
\begin{align*}
S \times Y^{(c)}_y \to S \times \hat{Y} \xrightarrow{\hat{\alpha}} \comp{(X_2)}{S} \xrightarrow{\epsilon_{X_2}} X_2. 
\end{align*}
Taking completions along $S$, we obtain
\begin{align*}
\hat{\xi^\prime} \circ (\id_S \times \hat{\epsilon}_U) \equiv \hat{\alpha} \text{ (modulo $\fm_S^{c+1}$)}.
\end{align*}
This completes the proof, because $\hat{\epsilon}_U = \hat{\phi}^{-1}$. \hfill $\square$

\subsection{Applications of the relative Artin approximation theorem}
\label{subsec: applications of relative artin approximation}
We shall need this theorem in the following two instances:

\begin{corollary}\label{cor: lifting formal isomorphisms}
\begin{enumerate}
\item Suppose we have an automorphism of $S \times \Ahat{n}$ over $S$ which does not necessarily preserve the section $\hat{z}$. Then for any $c \in \BN$ it can be lifted to a common \'etale neighbourhood $(V, \phi_S, \psi_S)$ such that $\psi_S^\cth \circ (\phi_S^\cth)^{-1} = \alpha^\cth$, as morphisms on the $c$th infinitesimal neighbourhoods. 
\item Suppose we have an automorphism of $S \times \Ahat{n}$ preserving the section $\hat{z}$. It can be lifted to a common \'etale neighbourhood as above which is also strict.
\end{enumerate}
\end{corollary}

\begin{proof}
To prove the first part, we apply Proposition \ref{prop: relative artin approximation theorem} to the following diagram:
\begin{center}
\begin{tikzpicture}[>=angle 90,bij/.style={above,sloped,inner sep=0.5pt}]
\matrix(b)[matrix of math nodes, row sep=2em, column sep=2em, text height=1.5ex, text depth=0.25ex]
{S \times \Ahat{n}&      & S \times \Ahat{n}\\
                  & S    &  \\};
\path[->, font=\scriptsize]
(b-1-1) edge node[bij]{$\sim$} node[below]{$\hat{\alpha}$} (b-1-3)
(b-1-1) edge[bend left=10] node[above]{$\pi$}(b-2-2)
(b-2-2) edge[bend left=15] node[below]{$\hat{z}$}(b-1-1)
(b-1-3) edge[bend right=10] node[above]{$\pi$}(b-2-2)
(b-2-2) edge[bend right=15] node[below]{$\hat{z}_2$}(b-1-3);
\end{tikzpicture}
\end{center} 
where $z_2= (\id_S \times \epsilon_{\Ahat{n}}) \circ \hat{\alpha} \circ \hat{z}$. The diagram commutes by construction, and Proposition \ref{prop: relative artin approximation theorem}  yields a strict common \'etale neighbourhood:
\begin{center}
\begin{tikzpicture}[>=angle 90]
\matrix(c)[matrix of math nodes, row sep=2em, column sep=2em, text height=1.5ex, text depth=0.25ex]
{    &  V  & \\
 S \times \A{n} && S \times \A{n} \\
  & S & \\};
\path[->, font=\scriptsize]
(c-1-2) edge node[above left]{$\phi_S$} (c-2-1)
(c-1-2) edge node[above right]{$\psi_S$} (c-2-3)
(c-2-1) edge[bend left=10] node[above]{$\pi$}(c-3-2)
(c-3-2) edge[bend left=15] node[below]{$z$}(c-2-1)
(c-2-3) edge[bend right=10] node[above]{$\pi$}(c-3-2)
(c-3-2) edge[bend right=15] node[below]{$z_2$}(c-2-3)
(c-1-2) edge[bend left=10] node[right]{$\rho$}(c-3-2)
(c-3-2) edge[bend left=10] node[left]{$\tau$}(c-1-2);
\end{tikzpicture}
\end{center}
such that $\psi_S^\cth \circ (\phi_S^\cth)^{-1} = \alpha^\cth$. Since $\psi_S \circ \tau = z_2$ and $z_2 \circ \redEmb{S} = z \circ \redEmb{S}$, this gives a common \'etale neighbourhood of the trivial pointed $n$-dimensional family. 

To prove the second part, notice that the additional assumption that $\hat{\alpha}$ preserves the section is equivalent to the statement that $z_2 = z$. It follows that the common \'etale neighbourhood is strict, as required. \qed
\end{proof}

Applying Propositions \ref{prop: relative artin approximation theorem}, \ref{prop: locally finite presentation}, and \ref{thm: artin}, we can also prove the following useful results:

\begin{lemma} \label{lemma: ubiquity of split neighbourhoods}
Every common \'etale neighbourhood of the trivial pointed family over $S$ is $(\infty)$-equivalent (and hence $(c)$-equivalent for any $c \in \BN$) to a split common \'etale neighbourhood.  
\end{lemma}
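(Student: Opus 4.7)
The strategy is to use the relative Artin approximation theorem to construct an étale cover $S \times U \to V$ (near the section) coming from an $n$-dimensional $k$-variety $U$, and then pull the maps $\phi$ and $\psi$ back along this cover to produce a split common étale neighbourhood similar to the original.

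To set this up, observe that since $\phi \colon V \to S \times \A{n}$ is étale, it induces an isomorphism $\hat\phi \colon \hat V \EquivTo S \times \Ahat{n}$ on formal completions along the sections. Composing $\hat\phi^{-1}$ with $\epsilon_V \colon \hat V \to V$ gives a formal $S$-morphism $\hat\xi \colon S \times \Ahat{n} \to V$ which respects sections only on $S_\red$. Following the technique used in the proof of Corollary \ref{cor: lifting formal isomorphisms}(1), we replace the section of $V$ by $\tau^\prime = \hat\xi \circ \hat z$, which agrees with $\tau$ on $S_\red$ and with respect to which $\hat\xi$ is strict. Applying Proposition \ref{prop: relative artin approximation theorem} with $X_2 = V$ (equipped with $\tau^\prime$), $Y = \A{n}$, $y = 0$, and $\hat\alpha = \hat\phi^{-1}$ then produces an étale neighbourhood $(U, u) \to (\A{n}, 0)$, an $S$-morphism $\xi^\prime \colon S \times U \to V$, and an open subscheme $\tilde V \subseteq S \times U$ containing $S \times \{u\}$ on which $\xi^\prime|_{\tilde V} \colon \tilde V \to V$ is étale and satisfies $\xi^\prime \circ (\id_S \times i_u) = \tau^\prime$.

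Setting $\phi^\prime = \phi \circ \xi^\prime|_{\tilde V}$ and $\psi^\prime = \psi \circ \xi^\prime|_{\tilde V}$, a direct check using the naturality $\phi \circ \epsilon_V = \epsilon_{S \times \A{n}} \circ \hat\phi$ shows that $\phi^\prime \circ \tau_{\tilde V} = z$ strictly, while $\psi^\prime \circ \tau_{\tilde V}$ agrees with $z$ on $S_\red$ via the formal identity $\hat\psi \circ \hat\phi^{-1} = \hat\alpha$. Hence $(\tilde V, \phi^\prime, \psi^\prime)$ is a common étale neighbourhood of the trivial pointed family with itself, and it is split in the sense of Definition \ref{standard etale neighbourhoods}, taking $W = U$, $w = u$, the open embedding $\tilde V \emb S \times W$, and the extensions $\overline\phi = \phi \circ \xi^\prime$, $\overline\psi = \psi \circ \xi^\prime$ defined on all of $S \times W$. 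Finally, similarity of $(\tilde V, \phi^\prime, \psi^\prime)$ with the original $(V, \phi, \psi)$ is witnessed directly: take $W = \tilde V$, with $f_1 = \xi^\prime|_{\tilde V}\colon \tilde V \to V$ étale and $f_2 = \id_{\tilde V}$; by construction $\phi \circ f_1 = \phi^\prime \circ f_2$ and $\psi \circ f_1 = \psi^\prime \circ f_2$, and the sections match on $S_\red$. This similarity implies the required $(\infty)$-equivalence.

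The principal technical obstacle is the non-strictness of $\hat\phi^{-1}$: without the section-modification step, the relative Artin approximation theorem does not apply directly, because its hypothesis requires $\hat\alpha$ to preserve the embeddings of $S$. Once this is addressed via the trick from Corollary \ref{cor: lifting formal isomorphisms}, the remainder of the argument is a straightforward pullback construction together with the elementary observation that étaleness of $\xi^\prime \circ (\id_S \times i_u)$-type compositions is preserved near the section.
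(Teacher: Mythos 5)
Your argument is correct, and it reaches the lemma by a mildly different route than the paper. Both proofs run on the relative Artin approximation theorem (Proposition \ref{prop: relative artin approximation theorem}), but the paper first restricts to a Zariski open $T \subseteq S$ around each point and uses the structure theory of smooth/\'etale morphisms (\cite{Mil} 3.14) to build, on a neighbourhood $U$ of $\tau(s)$ in $V$, a fresh \'etale chart $\lambda \colon U \to T \times \A{n}$ strictly compatible with the sections; it then applies Proposition \ref{prop: relative artin approximation theorem} to $\hat\lambda^{-1}$ and obtains a split neighbourhood similar to $(V,\phi,\psi)$ only after restriction to $T$, which is exactly what local similarity, i.e.\ $(\infty)$-equivalence, requires. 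You instead apply Proposition \ref{prop: relative artin approximation theorem} directly to $\hat\phi^{-1}$, curing the failure of strictness by replacing $\tau$ with $\tau' = \epsilon_V \circ \hat\phi^{-1} \circ \hat z$ --- the same device as in the proof of Corollary \ref{cor: lifting formal isomorphisms}(1) --- and you conclude a genuine global similarity between $(\tilde V, \phi', \psi')$ and $(V,\phi,\psi)$, which is stronger than the lemma demands. Your version dispenses with the Zariski localisation and with the appeal to the standard \'etale structure theorem; the paper's version never has to tamper with the section of $V$. Two small points you should make explicit: the completion $\comp{V}{S}$ is unchanged when $\tau$ is replaced by $\tau'$ (the sections agree on $S_\red$ and completion depends only on the underlying closed subset), so $\hat\phi$ and the hypotheses of Proposition \ref{prop: relative artin approximation theorem} are indeed available for the re-pointed family $(V,\rho,\tau')$; and the verification that $\psi' \circ (\id_S \times i_u)$ agrees with $z$ on $S_\red$ follows simply from $\tau' \circ \redEmb{S} = \tau \circ \redEmb{S}$ together with condition (2) of Definition \ref{common etale neighbourhood} --- your appeal to ``$\hat\psi \circ \hat\phi^{-1} = \hat\alpha$'' is a slip (you defined $\hat\alpha = \hat\phi^{-1}$), though the check itself goes through.
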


\begin{proof}
Let $\rho: V \rightleftarrows S: \tau$ be a pointed $n$-dimensional family with \'etale maps $\phi, \psi: V/S \to (S \times \A{n})/S$ giving a common \'etale neighbourhood. Using classical results on standard smooth and \'etale $n$-dimensional morphisms (see for example \cite{Mil} 3.14), we can show that for every $s \in S$ there is a Zariski open neighbourhood $T$ of $s$ in $S$, and an open neighbourhood $U$ of $\tau(s) \in \rho^{-1}(T)$ such that we have a commutative diagram as follows, with $\lambda$ \'etale:
\begin{center}
\begin{tikzpicture}[>=angle 90]
\matrix(f)[matrix of math nodes, row sep=3em, column sep=3em, text height=1.5ex, text depth=0.25ex]
{T \times \A{n} & U & V \\
T             & T & S. \\};
\path[-]
 (f-2-1) edge[double, double distance = 2pt] (f-2-2);
\path[->, font=\scriptsize]
 (f-1-2) edge node[above]{$\lambda$} (f-1-1)
 
 (f-1-1) edge[bend left=10] node[right]{$\pr_T$} (f-2-1)
 (f-2-1) edge[bend left=10] node[left]{$z$} (f-1-1)
 
 (f-1-2) edge[bend left=10] node[right]{$\rho_{|U}$} (f-2-2)
 (f-2-2) edge[bend left=10] node[left]{$\tau_{|T}$} (f-1-2)
 
 (f-1-3) edge[bend left=10] node[right]{$\rho$} (f-2-3)
 (f-2-3) edge[bend left=10] node[left]{$\tau$} (f-1-3);
\path[right hook->, font=\scriptsize]
 (f-1-2) edge (f-1-3)
 (f-2-2) edge (f-2-3); 
\end{tikzpicture}
\end{center}
By Proposition \ref{prop: relative artin approximation theorem} we can lift $\hat{\lambda}^{-1} : T \times \Ahat{n} \to U$: that is, we can find $(W,w)$ \'etale over $(\A{n}, 0)$ and $\lambda^\prime : (T \times W)/T \to U/T$. Moreover, $\lambda^\prime$ is \'etale on some open neighbourhood $V^\prime$ of $T \times \{w\}$, and $(V^\prime, \phi \circ \lambda^\prime_{|V^\prime}, \psi \circ \lambda^\prime_{|V^\prime})$ is a split common \'etale neighbourhood, similar to the restriction of $(V, \phi, \psi)$ to $T$. \qed
\end{proof}

\begin{lemma}\label{lemma: uniqueness of liftings}
Let $(y,Y)$ be a pointed $n$-dimensional variety, and $\pi: X \rightleftarrows S: \sigma$ be a pointed $n$-dimensional family over $S$. Suppose that we have two morphisms $\phi, \psi : Y \times S \to X$ compatible with the projections, and compatible with the sections on $S_\red$, such that $\hat{\phi} = \hat{\psi} : S \times \hat{Y} \EquivTo \comp{X}{S}$. Then there exists some $f_X: U \to Y$ \'etale such that $\phi \circ (f_X, \id_S) = \psi \circ (f_X, \id_S)$. In particular, the liftings provided by Corollary \ref{cor: lifting formal isomorphisms} are unique up to equivalence. 
\end{lemma}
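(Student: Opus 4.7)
The plan is to reduce the lemma to the Artin approximation theorem (Proposition \ref{thm: artin}) via an equalizer construction.

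First, I would shrink $X$ to an affine open neighbourhood of the section $\sigma(S)$ so that we may assume $X \to S$ is separated; this is harmless since the conclusion is local on $Y$ near $y$, and $\phi,\psi$ both agree with $\sigma$ formally along $\{y\} \times S$, so they factor through any such neighbourhood on a Zariski-open around $\{y\} \times S$ in $Y \times S$. Under this assumption, the equalizer
\[
\iota \colon E \;=\; (Y \times S) \times_{(\phi,\psi),\, X \times_S X,\, \Delta_X} X \;\hookrightarrow\; Y \times S
\]
is a closed subscheme, and for any test morphism $g_T \colon T \to Y$ one has $\phi \circ (g_T, \id_S) = \psi \circ (g_T, \id_S)$ precisely when $g_T \times \id_S \colon T \times S \to Y \times S$ factors through $\iota$. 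The hypothesis $\hat\phi = \hat\psi$ says exactly that the canonical morphism $Y_y^{(c)} \times S \to Y \times S$ factors through $\iota$ for every $c \in \BN$, yielding a compatible system of such factorizations on the infinitesimal neighbourhoods.

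Next, I would consider the functor of lifts
\[
H \colon (\Sch_{/Y})^{\op} \to \Set, \qquad H(T) \;=\; \Hom_{Y \times S}(T \times S,\, E),
\]
which, because $\iota$ is a closed immersion, takes values in singletons or the empty set. Applying Proposition \ref{prop: locally finite presentation} with $X = Y$, $Z = Y \times S$, $Y_1 = Y \times S$, $Y_2 = E$ shows $H$ is locally of finite presentation; here we use that $E \to Y \times S$ is of finite presentation, which in turn follows from $X \to S$ being smooth (hence of finite presentation) and separated, so that $\Delta_X$ is a closed immersion of finite presentation. The compatible family assembled above is then exactly an element $\hat\xi \in H(\hat Y)$ in the sense used by Proposition \ref{thm: artin}.

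Invoking Proposition \ref{thm: artin} now produces an \'etale neighbourhood $f_X \colon (U,u) \to (Y,y)$ together with an element $\xi' \in H(U)$, and the existence of $\xi'$ is precisely the equation $\phi \circ (f_X, \id_S) = \psi \circ (f_X, \id_S)$ that we want. The ``in particular'' assertion about uniqueness of the liftings in Corollary \ref{cor: lifting formal isomorphisms} then follows by applying the main statement: given two liftings $(V_i, \phi_i, \psi_i)$ of the same $\hat\alpha$, one uses Lemma \ref{lemma: ubiquity of split neighbourhoods} to reduce to the case the $V_i$ are split, forms $W = V_1 \times_{X_1} V_2$ via the $\phi_i$, and applies the main statement to the pair $\psi_1 \circ \pr_{V_1},\, \psi_2 \circ \pr_{V_2} \colon W \to X_2$ to obtain the similarity. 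The main obstacle I anticipate is the verification that $E$ is of finite presentation over $Y \times S$ when $S$ is possibly non-Noetherian, which is what makes Proposition \ref{prop: locally finite presentation} directly applicable; this follows from the finite presentation of $\Delta_X$ together with the standard behaviour of finite-presentation closed immersions under fibre product, but requires some care.
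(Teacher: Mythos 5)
Your overall strategy coincides with the paper's: encode the desired equality as the (non-)emptiness of a Hom-type functor on $\Sch_{/Y}$, verify that it is locally of finite presentation via Proposition \ref{prop: locally finite presentation}, and feed a formal element into Proposition \ref{thm: artin}; your functor $H$ is just the equalizer-subscheme variant of the functor $F$ used in the paper. The genuine gap is in the construction of the element $\hat\xi \in H(\hat Y)$. You extract from the hypothesis only that each $Y^{(c)}_y \times S \to Y \times S$ factors through $\iota$, and then assert that this compatible system ``is exactly'' an element of $H(\hat Y)$. It is not: writing $S = \Spec(R)$, an element of $H(\hat Y)$ is a factorization through $E$ of the map from the honest scheme $\hat Y \times S = \Spec(\widehat{\CO}_{Y,y} \otimes_k R)$, which has many points not lying over $\{y\} \times S$ (for $n=1$, $S = \A{1}$, think of the prime $(1-ts)$ in $k[[t]][s]$), whereas your compatible system is only an element of $\lim_c H(Y^{(c)}_y)$. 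There is no reason for the canonical map $H(\hat Y) \to \lim_c H(Y^{(c)}_y)$ to be surjective; producing an honest element over the complete local ring, rather than a pro-compatible system over the infinitesimal neighbourhoods, is exactly the non-formal ``effectivity'' input that Proposition \ref{thm: artin} requires, so this step cannot be waved through. The repair is to use the hypothesis at full strength, as the paper does: $\hat\phi = \hat\psi$ is used to conclude that $\phi \circ (\epsilon_Y, \id_S) = \psi \circ (\epsilon_Y, \id_S)$ as maps $\hat Y \times S \to X$, and then the universal property of the fibre product defining $E$ (no closedness of $\iota$ required) immediately produces the desired map $\hat Y \times S \to E$ over $Y \times S$, i.e.\ the element $\hat\xi \in H(\hat Y)$.

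Relatedly, the preliminary shrinking of $X$ to a separated (affine) open around $\sigma(S)$ is both unnecessary and problematic as stated. After shrinking, $\phi$ and $\psi$ only carry some Zariski open $W \supseteq \{y\} \times S$ of $Y \times S$ into the new $X$, and such a $W$ need not contain any tube $U \times S$: the open $D(1-ts) \subseteq \A{1} \times \A{1}$ contains $\{0\} \times \A{1}$ but no subset of the form $U \times \A{1}$; moreover an affine open containing $\sigma(S)$ need not exist. Fortunately none of this is needed: $E \to Y \times S$ is a monomorphism in any case (a base change of $\Delta_{X/S}$), so $H(T)$ is automatically empty or a singleton; the equivalence between non-emptiness of $H(T)$ and $\phi \circ (f, \id_S) = \psi \circ (f, \id_S)$ follows from the universal property of the fibre product, not from closedness; and $\Delta_{X/S}$ is locally of finite presentation by cancellation (and quasi-compact once $X/S$ is quasi-separated, a mild hypothesis), which is what Proposition \ref{prop: locally finite presentation} needs for $E$. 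With these corrections your argument becomes essentially the paper's proof, which works directly with the functor $T \mapsto \Hom\bigl(T \times_Y (Y \times S),\, T \times_Y (Y \times S) \times_X (Y \times S)\bigr)$ and never needs a closed immersion. Your sketch of the ``in particular'' assertion is fine.
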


\begin{proof}
We apply Proposition \ref{prop: locally finite presentation} to the diagram 
\begin{center}
\begin{tikzpicture}[>=angle 90]
\matrix(a)[matrix of math nodes, row sep=2em, column sep=2em, text height=1.5ex, text depth=0.25ex]
{Y \times S &                                  & (Y \times S) \times_{X} (Y \times S)\\
            & (Y \times S) \times (Y \times S) &    \\
            & Y &    \\};
\path[->, font=\scriptsize]
 (a-1-1) edge node[below left]{$\Delta$} (a-2-2)
 (a-1-3) edge node[below right]{$(p_1, p_2)$}(a-2-2)
 (a-2-2) edge node[left]{$\pr^{YSYS}_1$}(a-3-2);
\end{tikzpicture}
\end{center}
and obtain that the functor 
\begin{align*}
F: \Sch_{/Y}^\op &\to \Set\\
(T/Y) & \mapsto \Hom_{T \times_Y (Y \times S \times Y \times S)} \left(T \times_Y (Y \times S), T \times_Y(Y \times S) \times_{X} (Y \times S) \right)
\end{align*}
is locally of finite presentation. (Here $\Delta$ is the diagonal morphism, $p_1$ and $p_2$ are the projections from $(Y \times S) \times_{X} (Y \times S)$ to $Y \times S$ satisfying $\phi \circ p_1 = \psi \circ p_2$, and $\pr^{YSYS}_1$ is the projection onto the first $Y$ factor.)

The fact that $\hat{\phi} = \hat{\psi}$ implies that $\phi \circ (\epsilon_{Y}, \id_S) = \psi \circ (\epsilon_Y, \id_S)$ as maps from $\hat{Y} \times S$ to $X$. Hence we obtain a map from $\hat{Y} \times S$ to $(Y \times S) \times_X (Y \times S)$ over $Y$, which finally gives us a map $\hat{Y} \times S \to \hat{Y} \times_Y(Y \times S) \times_{X} (Y \times S)$ corresponding to an element $\hat{\xi}$ of $F(\hat{Y})$. 

Since $F$ is locally of finite presentation, Proposition \ref{thm: artin} applies, and we obtain an \'etale neighbourhood $f_X: (U, u) \to (Y, y)$ and an element $\xi^\prime \in F(U)$ which agrees with $\hat{\xi}$ modulo $\fm_y^2$. 

Now we remark that for any $Y$-scheme $f:T \to Y$, $F(T)$ is non-empty if and only if $\phi \circ (f, \id_S) = \psi \circ (f, \id_S)$ (and moreover, in that case $F(T)$ consists of a single point). 

Indeed, $F(T)$ is a subset of $\Hom \left(T \times S, T \times_Y(Y \times S) \times_{X} (Y \times S) \right)$. A map $\alpha: T \times S \to T \times_Y(Y \times S) \times_{X} (Y \times S)$ is given by three maps
\begin{align*}
\alpha_1: T \times S &\to T\\
\alpha_i: T \times S &\to Y \times S,\ i=2,3,
\end{align*}
satisfying 
\begin{align} \label{alpha properties}
f \circ \alpha_1 &= \pr_Y^{YS} \circ \alpha_2; \nonumber\\
\phi \circ \alpha_2 &= \psi \circ \alpha_3.
\end{align}
This $\alpha$ is an element of $F(T)$ if and only if it is compatible with the maps $T \times S \to T \times_Y (Y \times S \times Y \times S)$ and $T \times_Y (Y \times _S) \times_X (Y \times S)$, or equivalently if and only if
\begin{align*}
\alpha_1 &= \pr^{TS}_T;\\
\alpha_2 &= (f, \id_S);\\
\alpha_3 &= (f, \id_S).
\end{align*}
Therefore, the only possible candidate for an element of $F(T)$ corresponds to the triple $(\pr^{TS}_T, (f, \id_S), (f, \id_S))$, which only gives a map $\alpha$ in the case that the equations (\ref{alpha properties}) are satisfied. This amounts exactly to the condition $\phi \circ (f, \id_S) = \psi \circ (f, id_S)$.

It follows that the existence of $\xi^\prime \in F(U)$ means that $f_X:U \to Y$ gives the desired \'etale neighbourhood. \qed
\end{proof}

Combining Corollary \ref{cor: lifting formal isomorphisms} and Lemmas \ref{lemma: ubiquity of split neighbourhoods} and \ref{lemma: uniqueness of liftings}, we obtain
\begin{prop}\label{prop: the group homomorphisms are isomorphisms}
For any affine base-scheme $S$ and any $c \in \BN \cup \{\infty\}$, the group homomorphisms 
\begin{align*}
F^{\cth}_S : \Aut_{\left(\prestackVc{c}(S)\right)}(S \times G^{\cth} \to S) \to G^\cth(S) \\
F^{\prime\cth}_S : \Aut_{\left(\prestackPVc{c}(S)\right)}(S \times K^{\cth} \to S) \to K^\cth(S) 
\end{align*}
of Propositions \ref{prop: faithful morphism for c} and \ref{prop: faithful morphism K} are injective.\footnote{By a slight abuse of notation, we understand $K^{(\infty)}$ and $G^{(\infty)}$ to mean $K$ and $G$ respectively.} When $c$ is finite, the homomorphisms are surjective as well. 
\end{prop}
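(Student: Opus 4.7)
The proposition decomposes into two halves: injectivity (all $c$) and surjectivity (finite $c$ only, for both the non-strict and strict variants). For finite $c$, injectivity is essentially tautological: by construction $F^\cth_S$ sends $[V,\phi,\psi]$ to $\hat{\psi}^{(c)} \circ (\hat{\phi}^{(c)})^{-1}$, and this is exactly the data modded out by the $(c)$-equivalence relation. The same argument handles $F^{\prime\cth}_S$.

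Injectivity at $c=\infty$ is the one genuinely nontrivial input. My plan is as follows: suppose $(V_i,\phi_i,\psi_i)$, $i=1,2$, represent morphisms in $\prestackVc{\infty}(S)$ with a common image $\omega \in G(S)$, so that $\hat{\psi}_i = \omega \circ \hat{\phi}_i$ after completing along $S$. Since $(\infty)$-equivalence is local on $S$, I would first apply Lemma~\ref{lemma: ubiquity of split neighbourhoods} to replace each $V_i$, up to $(\infty)$-equivalence, by a split common \'etale neighbourhood coming from data $(W_i,\overline{\phi}_i,\overline{\psi}_i,w_i)$. Then I would form the fibre product $W = W_1 \times_{\A{n}} W_2$ (taken via $\overline{\phi}_i$), localise at the distinguished point $(w_1,w_2)$, and consider the two maps $\overline{\psi}_1 \circ \pr_1, \overline{\psi}_2 \circ \pr_2 : S \times W \to S \times \A{n}$. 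By construction these are compatible with the projections and the zero sections on $S_\red$, and they agree after formal completion along $S$ since $\hat{\overline{\psi}}_i \circ \hat{\pr}_i = \omega \circ \hat{\overline{\phi}}_i \circ \hat{\pr}_i$ is independent of $i$. Lemma~\ref{lemma: uniqueness of liftings} then produces an \'etale refinement of $S \times W$ on which the two maps literally coincide, yielding a common \'etale neighbourhood that dominates both $V_1$ and $V_2$ and witnesses the required similarity.

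For surjectivity at finite $c$, I would first lift $\alpha \in G^\cth(S)$ to some $\hat{\alpha} \in G(S)$. Concretely, writing $S=\Spec R$, the element $\alpha$ is a continuous automorphism of $R\series{n}/\fm^{c+1}$, and choosing any lift of the coefficients $\alpha(t_k)$ to $R\series{n}$ (for instance by setting coefficients of degree $>c$ to zero) defines a continuous endomorphism whose invertibility depends only on the degree-$1$ Jacobian and hence is inherited from $\alpha$. Now $\hat{\alpha}$ is an automorphism of $S \times \Ahat{n}$ over $S$, so Corollary~\ref{cor: lifting formal isomorphisms}(1) produces a common \'etale neighbourhood $(V,\phi,\psi)$ of the trivial family with $\psi_S^{(c)} \circ (\phi_S^{(c)})^{-1} = \alpha$, which is the required preimage. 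The strict case is identical, noting that if $\alpha \in K^\cth(S)$ preserves the origin then the zero-coefficient lift $\hat{\alpha}$ lies in $K(S)$, so Corollary~\ref{cor: lifting formal isomorphisms}(2) yields a strict common \'etale neighbourhood.

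The main obstacle is genuinely the injectivity at $c = \infty$, since unlike the finite case it is not forced by the definition of the equivalence relation; its proof uses both Artin-approximation inputs (through the ubiquity of split neighbourhoods and the uniqueness-of-liftings lemma), whereas surjectivity is essentially a single invocation of the relative Artin approximation packaged in Corollary~\ref{cor: lifting formal isomorphisms}.
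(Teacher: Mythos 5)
Your overall architecture matches the paper's: finite-$c$ injectivity is read off from the definition of $(c)$-equivalence, surjectivity for finite $c$ comes from Corollary \ref{cor: lifting formal isomorphisms} (your explicit truncation lift of $\alpha \in G^\cth(S)$ to $G(S)$, resp.\ of $K^\cth(S)$ to $K(S)$, is correct and makes explicit a step the paper leaves implicit), and injectivity at $c=\infty$ is to be deduced from Lemma \ref{lemma: ubiquity of split neighbourhoods} together with Lemma \ref{lemma: uniqueness of liftings}, which is exactly the paper's route.

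However, your execution of the $c=\infty$ step has a genuine gap. The split structure of $(V_i,\phi_i,\psi_i)$ only supplies $S$-morphisms $\overline{\phi}_i,\overline{\psi}_i\colon S\times W_i\to S\times\A{n}$; it does \emph{not} supply maps $W_i\to\A{n}$, so the fibre product $W_1\times_{\A{n}}W_2$ ``taken via $\overline{\phi}_i$'' is not defined by the data. If you instead form $(S\times W_1)\times_{S\times\A{n}}(S\times W_2)$ via the $\overline{\phi}_i$, you do get an $S$-scheme on which $\overline{\phi}_1\circ\pr_1=\overline{\phi}_2\circ\pr_2$ and on which your two candidate maps agree after completing along the section, but this scheme is not of the form $Y\times S$ for a $k$-variety $Y$, and Lemma \ref{lemma: uniqueness of liftings} only applies to morphisms out of a constant family $Y\times S$. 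That hypothesis is not cosmetic: the relative approximation results of Section \ref{sec: relative artin approximation} are proved only for constant families, which is precisely why split neighbourhoods are introduced at all. (Using the structural \'etale maps $W_i\to\A{n}$ arising in the proof of Lemma \ref{lemma: ubiquity of split neighbourhoods} does not rescue the construction, since those maps do not coincide with $\overline{\phi}_i$, so the asserted agreement of completions fails.) The repair --- and in effect the paper's argument --- is to use that $F^{(\infty)}_S$ is a group homomorphism, so injectivity reduces to triviality of the kernel: an automorphism mapping to $\id\in G(S)$ has, locally on $S$, a split representative $(V\subset S\times W,\overline{\phi}|_V,\overline{\psi}|_V)$; since $V$ is open in $S\times W$ around the section, $\comp{V}{S}\simeq S\times\hat{W}$, so the kernel condition gives $\hat{\overline{\phi}}=\hat{\overline{\psi}}$, and Lemma \ref{lemma: uniqueness of liftings} applies verbatim with $Y=W$ and target $S\times\A{n}$, producing an \'etale refinement on which $\overline{\phi}$ and $\overline{\psi}$ coincide; the refined neighbourhood is symmetric, hence represents the identity. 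With this modification (and its strict analogue for $K$), the rest of your argument goes through.
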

\begin{proof}
For finite $c$, injectivity follows immediately from the definition of $(c)$-equiva\-lence, and Corollary \ref{cor: lifting formal isomorphisms} shows that the homomorphisms are surjective. 

Now let $c = \infty$. Lemma \ref{lemma: uniqueness of liftings} implies that the homomorphisms are injective when restricted to the set of automorphisms represented by split common \'etale neighbourhoods. By Lemma \ref{lemma: ubiquity of split neighbourhoods}, this implies that they are injective. \qed
\end{proof}

It follows that for $c \in \BN$, $F^\cth$ and $F^{\prime \cth}$ give equivalences of prestacks, and using the uniqueness of stackification, we obtain the following:
\begin{thm} \label{thm: isomorphisms for order c stacks}
Let $c \in \BN$. We have isomorphisms of stacks:
\begin{align*}
\varietiesc{c} &\EquivTo BG^\cth\\
\pointedvarietiesc{c} &\EquivTo BK^\cth.
\end{align*}
\end{thm}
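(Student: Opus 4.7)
The plan is very short: the real work has already been done in Proposition \ref{prop: the group homomorphisms are isomorphisms}, and all that remains is to package it as an equivalence of stacks.

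First I would observe that for every affine test scheme $S$, the groupoids $\prestackVc{c}(S)$ and $BG^\cth_\triv(S)$ each contain a single object, namely the trivial pointed family $S \times \A{n} \rightleftarrows S$ on the one side and the trivial principal bundle $S \times G^\cth \to S$ on the other. Consequently the morphism of prestacks $F^\cth$ is completely determined, up to the unique natural isomorphism between the two objects, by the collection of group homomorphisms
\begin{align*}
F^\cth_S : \Aut_{\prestackVc{c}(S)}(S \times \A{n} \rightleftarrows S) \to G^\cth(S).
\end{align*}
By Proposition \ref{prop: the group homomorphisms are isomorphisms}, each of these maps is a bijection when $c$ is finite, so $F^\cth_S$ is an equivalence of one-object groupoids for every $S$. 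Hence $F^\cth$ is an equivalence of prestacks $\prestackVc{c} \simeq BG^\cth_\triv$. The identical argument, restricted to strict common \'etale neighbourhoods, gives $F^{\prime\cth} : \prestackPVc{c} \simeq BK^\cth_\triv$.

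Next I would invoke the $2$-functoriality of \'etale stackification. Since $\varietiesc{c}$ is by definition the stackification of $\prestackVc{c}$, and $BG^\cth$ is the stackification of $BG^\cth_\triv$, and since stackification carries equivalences of prestacks to equivalences of stacks (this is immediate from its universal property), the prestack equivalence $F^\cth$ induces the desired equivalence of stacks $\varietiesc{c} \EquivTo BG^\cth$. Similarly one obtains $\pointedvarietiesc{c} \EquivTo BK^\cth$.

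There is no genuine obstacle remaining; the one point where a little care is needed is to record that $F^\cth$ and $F^{\prime\cth}$ are honestly morphisms of prestacks (not just collections of group homomorphisms on automorphisms), so that stackification can be applied. But this is exactly the content of Propositions \ref{prop: faithful morphism for c} and \ref{prop: faithful morphism K}, which verified both that the constructions are well-defined on $(c)$-equivalence classes and that they respect composition. Combined with the bijectivity from Proposition \ref{prop: the group homomorphisms are isomorphisms}, the theorem follows with no further argument.
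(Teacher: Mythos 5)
Your argument is correct and is essentially the paper's own proof: the paper likewise deduces the theorem by combining the well-definedness of $F^\cth$ and $F^{\prime\cth}$ (Propositions \ref{prop: faithful morphism for c} and \ref{prop: faithful morphism K}) with the bijectivity of the automorphism-group homomorphisms for finite $c$ (Proposition \ref{prop: the group homomorphisms are isomorphisms}), and then passes to stackification. No gaps to report.
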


\begin{rmk}\label{rmk: stack of analytic germs}
We also have morphisms
\begin{align*}
\varietiesc{\infty} &\to BG\\
\pointedvarietiesc{\infty} &\to BK,
\end{align*}
but they are not isomorphisms. There are two approaches to modify the stacks involved to obtain an equivalence: we can either enlarge the automorphism groups of the stacks on the left hand side, or we can restrict the automorphism groups of those on the right hand side. 

\begin{enumerate}
\item Motivated by the above discussion, we see that we can define yet another stack of germs, this one equivalent to the classifying stack $BG$. It is the stackification of the prestack $\prestackV$ which again sends a test scheme $S$ to a groupoid whose only object is the trivial pointed $n$-dimensional family $S \times \A{n} \rightleftarrows S$. However, we would like the morphisms of this groupoid to correspond to elements of $G(S)$, i.e. automorphisms $\hat{\alpha}$ of $S \times \Ahat{n}$. Proposition \ref{prop: relative artin approximation theorem} tells us that we can represent such an automorphism by a \emph{sequence} of common \'etale neighbourhoods $\{(U_c, \phi_c, \psi_c)\}_{c=1}^\infty$ such that for each $c$, 
\begin{align*}
\psi_S^\cth \circ (\phi_S^\cth)^{-1} = \alpha^\cth.
\end{align*}
It follows that $(U_c, \phi_c, \psi_c)$ is uniquely determined up to $(c)$-equivalence, i.e. as a morphism in $\prestackVc{c}(S)$. Moreover, the sequence $\{(U_c, \phi_c, \psi_c)\}_{c=1}^\infty$ determines $\hat{\alpha}$.

That is, we should define $\varieties$ to be
\begin{align*}
\lim_{c \in\BN} \varietiesc{c}.
\end{align*}
We will call this the \emph{stack of formal germs of $n$-dimensional varieties}. We can similarly define
\begin{align*}
\pointedvarieties \defeq \lim_{c \in \BN} \pointedvarietiesc{c}.
\end{align*}

\item Let $G^\et$ be the group-valued prestack sending a test scheme $S$ to the image of $\Aut_{\prestackVc{\infty}} \left(S \times \A{n} \rightleftarrows S \right)$ in $G(S)$ under $F_S$: i.e. this is the group of all automorphisms of the formal disc which can be lifted precisely to common \'etale neighbourhoods. Then we have that
\begin{align*}
\varietiesc{\infty} \EquivTo BG^\et.
\end{align*}
It follows from Corollary \ref{cor: lifting formal isomorphisms} that $G^\et$ is dense in $G$, and hence we will be able to show that restriction from $G$ to $G^\et$ gives a fully faithful embedding $\Res_{G, G^\et}: \Rep(G) \emb \Rep(G^\et)$ (see Corollary \ref{cor: restriction is an embedding for G}). 

Similarly, we can define a sub-group $K^\et$ of $K$ such that
\begin{align*}
\pointedvarietiesc{\infty} \EquivTo BK^\et.
\end{align*}
In Section \ref{sec: groups of etale automorphisms} we study the representation theory of these group-valued prestacks $G^\et$ and $K^\et$. 
\end{enumerate}
\end{rmk}

\section{Groups of \'etale automorphisms and their representation theory}
\label{sec: groups of etale automorphisms}
This section is about the top rows of the main diagram (Figure 1), when $c= \infty$. We have the following stacks
\begin{align*}
\varietiesc{\infty} \EquivTo BG^\et \emb BG,
\end{align*}
giving rise to the following categories
\begin{align*}
\Rep(G) \to \Rep(G^\et) \EquivTo \QCoh{\varietiesc{\infty}};
\end{align*}
the composition of the morphisms is the functor $F^*$. We see that in order to understand the relationship between quasi-coherent sheaves on $\varietiesc{\infty}$ and representations of $G$, it suffices to study the restriction functor $\Res_{G, G^\et}: \Rep(G) \to \Rep(G^\et)$. In fact we will begin by working with the group $K$; then we will apply our results to the group $G$ as well. 

In \ref{subsec: Unipotent subgroups and polynomial submonoids}, we define some subgroups and submonoids of $G, G^\et, K,$ and $K^\et$. These will be technically easier to work with than the full groups, as we will see in the subsequent sections. In \ref{subsec: representations of K-etale} we study the restriction functor $\Res_{K, K^\et}$, and show that it gives an equivalence of the subcategories of finite-dimensional representations. Since $K$ is an affine group scheme, all of its representations are locally finite, from which we conclude that the functor $\Res_{K, K^\et}$ is fully faithful, with essential image the subcategory of locally finite representations of $K^\et$. 

We do not know whether there are any representations of $K^\et$ which are \emph{not} locally finite, but in \ref{subsec: non-locally finite representations} we give an example of a pair $H \supset H^\prime$ of a pro-algebraic group $H$ containing a dense group-valued subprestack  $H^\prime$, such that $H^\prime$ has representations which are not locally finite, and hence do not extend to representations of $H$. 

Finally, in \ref{subsec: representations of G-etale} we study the restriction functor 
\begin{align*}
\Res_{G, G^\et} : \Rep(G) \to \Rep(G^\et).
\end{align*}
Analogously to \ref{subsec: representations of K-etale} we show that it is fully faithful, and characterise its essential image as those representations of $G^\et$ satisfying a suitable finiteness condition. 

\subsection{Unipotent subgroups and polynomial submonoids}
\label{subsec: Unipotent subgroups and polynomial submonoids}
Recall from definition \ref{defn: unipotent subgroups of K} that the pro-unipotent group $K_u$ is the kernel of the natural map $K \to GL_n$; analogously, we define the sub-group-valued-prestack $K^\et_u$ of $K^\et$ to be the kernel of the the restriction of this map to $K^\et$. We have $K^\et = GL_n \ltimes K^\et_u$ as group-valued prestacks.

Recall also that in the proof of Proposition \ref{prop: representations of G are continuous} we defined for any $c$ a group-valued prestack $N_c$ by setting $N_c(S)$ to be the kernel of the homomorphism $G(S) \surj G^\cth(S)$. We noted then that $N_c(S)$ is also the kernel of the homomorphism $K(S) \surj K^\cth(S)$; 
now we remark in addition that it is contained in $K_u(S)$, and that its intersection with $K^\et(S)$ is the kernel of the maps
\begin{align*}
K^\et(S) \surj K^\cth(S), \qquad K^\et_u \surj K^\cth_u(S).
\end{align*}
(That these maps are surjective is a consequence of Corollary \ref{cor: lifting formal isomorphisms}.)

The prestack $N_c$ is an affine group scheme of infinite type. 
\begin{eg}
In the case $n=1$, $N_c$ parametrises automorphisms $\rho: R\seriest \to R\seriest$ of the form
\begin{align*}
\rho: t \mapsto t+ r_{c+1}t^{c+1} + r_{c+2}t^{c+2} + \ldots.
\end{align*}
\end{eg}

\begin{defn}
Consider for each $\Spec{R}$ the set $M(\Spec{R})\subset K(\Spec{R})$ of \emph{polynomial automorphisms}: these are automorphisms 
\begin{align*}
\rho: R\series{n} \to R\series{n}
\end{align*}
such that for each $k = 1, \ldots, n$, $\rho(t_k)$ is a polynomial in the variables $\{t_j\}$ with no constant term, rather than a power series. This defines a prestack $M \emb K^\et \emb K$.
\end{defn}
This is a monoid rather than a group-valued prestack, since it is not closed under taking inverses; however, it is in some ways easier to work with than $K^\et$ in that it is an indscheme: 
\begin{align*}
M = \colim_{\alpha \in \BN} M_\alpha,
\end{align*}
where $M_\alpha$ classifies polynomial automorphisms of degree at most $\alpha$. (Note that $M_\alpha$ is a scheme, but is not even a monoid, since composing two polynomial automorphisms of degree $\alpha$ gives a polynomial automorphism of degree $\alpha^2$.)

Similarly, we have the unipotent version $M_u = \colim_{\alpha \in \BN} M_{u, \alpha}$, where 
\begin{align*}
M_{u, \alpha} = \Spec{k [a^k_J]_{1<|J|\le \alpha}}
\end{align*}
classifies unipotent polynomial automorphisms of degree at most $\alpha$. Although the $M_{u, \alpha}$ are not closed under composition, they are still closed under the action of $\BG_m \emb GL_n$ by conjugation, and hence each algebra $k [a^k_J]_{1 < |J| \le \alpha}$ is still graded, with $\deg(a^k_J) = |J| -1$. 

We also have a monoid of polynomial automorphisms $M^G$ in $G$: 
\begin{align*}
M^G = \colim_{\alpha \in \BN} M^G_\alpha, 
\end{align*}
where $M^G_\alpha$ parametrises polynomial automorphisms of the formal disc of degree at most $\alpha$, whose constant terms are nilpotent. It is itself an indscheme:
\begin{align*}
M^G_\alpha = \colim_N M^G_{\alpha,N},
\end{align*}
where $M^G_{\alpha,N}$ parametrises only those polynomial automorphisms whose constant terms all satisfy $a^N =0$. As a scheme,
\begin{align*}
M^G_{\alpha,N} = \Spec{k[a^k_J]_{|J| \le \alpha}/((a^k_{\underline{0}}) ^N)}.
\end{align*}  

Finally, notice that for any $c$, the group scheme $N_c$ also contains a monoid $M^{N_c} = \colim_{\alpha \ge c+1} M^{N_c}_\alpha$ of polynomial automorphisms. 

Having established this notation, we can prove the following:

\begin{lemma}\label{restriction is injective}
Let $H \in \{ K, K_u, G, N_c \}$. Then the inclusion 
\begin{align*}
H^\et \emb H
\end{align*}
induces a map of sets
\begin{align*}
\Hom_{\PreStk}(H, \A{1}) \to \Hom_{\PreStk}(H^\et, \A{1}).
\end{align*}
This map is injective.

Moreover, the same is true of the restriction map
\begin{align*}
\Hom_{\PreStk}(H \times H, \A{1}) \to \Hom_{\PreStk}(H^\et \times H^\et, \A{1}).
\end{align*}
\end{lemma}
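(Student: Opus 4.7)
The plan is to factor the restriction through the sub-monoid of polynomial automorphisms and then check injectivity at the level of coordinate rings. For each $H \in \{K, K_u, G, N_c\}$ the monoids $M$, $M_u$, $M^G$, $M^{N_c}$ introduced above sit inside $H^\et$, because a polynomial map $R[\underline{t}] \to R[\underline{t}]$ that induces an automorphism of $R\series{n}$ is automatically \'etale in a Zariski neighbourhood of the origin of $\A{n}_R$, and therefore defines a common \'etale neighbourhood of the trivial family over $\Spec R$. So in each case we have
\[
M^H \hookrightarrow H^\et \hookrightarrow H,
\]
and the restriction map we want to study factors as $\Hom_{\PreStk}(H, \A{1}) \to \Hom_{\PreStk}(H^\et, \A{1}) \to \Hom_{\PreStk}(M^H, \A{1})$. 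It therefore suffices to prove that the composition is injective.

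Next, I would use that each side of the composition is (ind-)affine. Concretely, $\Hom_{\PreStk}(H, \A{1}) = \CO(H)$, while $M^H = \colim_\alpha M^H_\alpha$ yields $\Hom_{\PreStk}(M^H, \A{1}) = \lim_\alpha \CO(M^H_\alpha)$. The explicit presentations from Section \ref{subsec: Unipotent subgroups and polynomial submonoids} give $\CO(K) = k[a^k_J, \det^{-1}]_{|J|>0}$, $\CO(K_u) = k[a^k_J]_{|J|>1}$, and in the $G$- and $N_c$-cases a colimit over a nilpotence parameter $N$ of the analogous rings with $(a^k_{\underline 0})^N = 0$; on the $M$-side one simply truncates $|J| \le \alpha$. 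The restriction map $\CO(H) \to \CO(M^H_\alpha)$ is then the algebra map sending $a^k_J$ to $a^k_J$ if $|J| \le \alpha$ and to $0$ otherwise.

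The main step is then the following elementary observation: any $f \in \CO(H)$ is a polynomial expression involving only finitely many of the generators $a^k_J$; let $\beta = \max\{|J| : a^k_J \text{ appears in } f\}$. For every $\alpha \ge \beta$, the image of $f$ in $\CO(M^H_\alpha)$ is the \emph{same} polynomial in the same variables, and hence vanishing of all these restrictions forces $f = 0$. In the ind-scheme cases $H = G$ or $H = N_c$ one applies this to each truncation $G_N$ or $(N_c)_N$ separately; the nilpotence bound affects only the constant-term coordinates $a^k_{\underline 0}$ and is therefore orthogonal to the polynomial-degree filtration on the $M^H_\alpha$. This proves the first injectivity.

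The $H \times H$ statement is handled identically: the inclusions $M^H \times M^H \subset H^\et \times H^\et \subset H \times H$ produce a factorisation
\[
\CO(H) \otimes_k \CO(H) \;\longrightarrow\; \Hom_{\PreStk}(H^\et \times H^\et, \A{1}) \;\longrightarrow\; \lim_{\alpha,\alpha'} \CO(M^H_\alpha) \otimes_k \CO(M^H_{\alpha'}),
\]
and any element of $\CO(H) \otimes_k \CO(H)$ is a finite sum of pure tensors, each of which involves only finitely many coordinates from each factor; choosing $\alpha,\alpha'$ large enough recovers such a tensor on the nose. I do not expect a real obstacle here: the whole argument is bookkeeping on finitely many coordinates, and the only point requiring minor care is the interaction between polynomial degree and nilpotence degree in the $G$ and $N_c$ cases, which is harmless as indicated.
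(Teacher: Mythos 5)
Your proposal is correct and follows essentially the same route as the paper: restrict from $H^\et$ to the polynomial submonoid $M^H$, use its ind-scheme presentation in the coordinates $a^k_J$, and observe that any function, being a polynomial in finitely many generators (handled for each nilpotence truncation separately in the $G$ and $N_c$ cases), is recovered from its images on $M^H_\alpha$ for $\alpha$ large, with the product case done in the same way. The only difference is bookkeeping detail (e.g.\ writing functions on the ind-schemes as limits of the truncated rings rather than as a single coordinate ring), which does not affect the argument.
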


\begin{proof}
The intuition behind this statement is that Artin's approximation theorem tells us that $H^\et$ is dense in $H$. In order to give a rigorous proof it is useful to restrict further to the monoid introduced above: this is still dense in $H$ and we can exploit its indscheme structure to study its functions. It is clearly sufficient to show that restriction from $H$ to the monoid, which we'll denote by $M^H$, (respectively from $H \times H$ to $M^H \times M^H$) is injective: if two maps agree on $H^\et$, they certainly agree on $M^H$. We will carry out the proof for the case $H=G$; the remaining cases are very similar (and where different, simpler). 

By the universal property of colimits, a map $\phi: M^G \to \A{1}$ is given by a compatible family of polynomials 
\begin{align*}
\left( f_{\alpha,N} \in k[a^k_J]_{|J| \le \alpha}/((a^k_{\underline{0}})^N) \right)_{\alpha,N}.
\end{align*}

For fixed $N$, the compatibility between $f_{\alpha,N}$ is the following: if $\alpha_1 > \alpha_2$, we require that the polynomial obtained from $f_{\alpha_1, N}$ by setting $a^K_J =0$ for all $J$ with $|J| > \alpha_2$ be equal to $f_{\alpha_2, N}$. 

Similarly, a map $\psi: G \to \A{1}$ is defined by a compatible family of polynomials
\begin{align*}
\left( g_{N} \in k[a^k_J]_{J \in \BZ_{\ge 0} ^n} / ((a^k_{\underline{0}})^N) \right) _N.
\end{align*}

Let $\phi^1=(f^1_{\alpha,N})$ and $\phi^2=(f^2_{\alpha,N})$ be the restriction of two maps $\psi^1=(g^1_N)$ and $\psi^2=(g^2_N)$ to $M$. For any $\alpha$, the polynomial $f^i_{\alpha,N}$ is obtained from $g^i_N$ by setting $a^k_J = 0$ for all $J$ with $|J| > \alpha$. It is clear that if $f^1_{\alpha,N} = f^2_{\alpha,N}$ for every $\alpha$, then $g^1_N = g^2_N$. That is, if $\phi^1 = \phi^2$, then $\psi^1 = \psi^2$, and so the restriction map is injective, as required. 

The argument for $G^\et \times G^\et \emb G \times G$ is similar. \qed
\end{proof}

\subsection{Representations of \texorpdfstring{$K^\et$}{K-etale}}
\label{subsec: representations of K-etale}
\begin{thm}\label{theorem: K extensions}
Let $V$ be a finite-dimensional vector space, and let $\bR: K^\et \to GL_V$ be a representation of $K^\et$ on $V$. Then the natural transformation $\bR$ extends uniquely to a representation
\begin{align*}
\overline{\bR}: K \to GL_V.
\end{align*}
Equivalently, there exists some $c$ such that $\bR$ factors through the finite-di\-men\-sion\-al quotient $K^\cth$; then the extension $\overline{\bR}$ is defined via the quotient $K \surj K^\cth$. 
\end{thm}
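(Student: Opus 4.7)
The plan proceeds in two stages. For uniqueness, note that if $\overline{\bR}_1, \overline{\bR}_2: K \to GL_V$ both restrict to $\bR$ on $K^\et$, then for each pair $(v, \ell) \in V \times V^*$ the matrix entries $\ell \circ \overline{\bR}_i(-)v$ are morphisms $K \to \A{1}$ agreeing on $K^\et$, so by Lemma~\ref{restriction is injective} (with $H = K$) they coincide, forcing $\overline{\bR}_1 = \overline{\bR}_2$.

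For existence, I aim to show that $\bR$ factors through $K^\et \to K^\cth$ for some $c$; the extension $\overline{\bR}: K \to K^\cth \to GL_V$ is then immediate. Via the semidirect decompositions $K = GL_n \ltimes K_u$ and $K^\et = GL_n \ltimes K_u^\et$, and using that any representation of the finite-type algebraic group $GL_n$ is automatically well-behaved, this reduces to showing $\bR|_{K_u^\et}$ factors through $K_u^\et \to K_u^\cth$.

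The key step is a $\BG_m$-weight argument. Decompose $V = \bigoplus_\lambda V_\lambda$ into weight spaces for $\BG_m \subset GL_n$ via $\bR|_{\BG_m}$; since $\dim V < \infty$, only finitely many weights occur, so $w := \max |\lambda - \mu|$ (over pairs of occurring weights) is finite. Set $c = w + 1$. I claim $\bR$ is trivial on the kernel $K_u^\et \cap N_c$. Restrict first to the polynomial submonoid $M_u^{N_c} \subset K_u^\et \cap N_c$: by $\BG_m$-equivariance of $\bR$ (conjugation by $\BG_m \subset GL_n$ normalises $K_u^\et \cap N_c$ and the induced grading on $\End(V)$ has weights $\mu - \lambda$), each matrix entry of $\bR|_{M_u^{N_c}}$ is a $\BG_m$-homogeneous function of absolute weight at most $w$. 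However, the algebra of functions on $M_u^{N_c}$ is a graded polynomial algebra in the $a^k_J$ with $|J| > c$, each of weight $|J| - 1 \ge w + 1$, so every nonconstant homogeneous function has absolute weight $\ge w + 1 > w$. The matrix entries are therefore constant; evaluating at the identity yields $\bR|_{M_u^{N_c}} = \mathrm{id}$.

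The main obstacle is to promote this triviality from the polynomial monoid $M_u^{N_c}$ to the full subprestack $K_u^\et \cap N_c$. This is a density argument in the spirit of Lemma~\ref{restriction is injective}: the proof of that lemma shows that the restriction $\mathrm{Hom}_\PreStk(N_c, \A{1}) \to \mathrm{Hom}_\PreStk(M^{N_c}, \A{1})$ is injective, and a parallel argument (exploiting the indscheme structure of $M_u^{N_c}$ and the fact that every element of $K_u^\et \cap N_c$ is determined by its coefficients) yields injectivity of $\mathrm{Hom}_\PreStk(K_u^\et \cap N_c, \A{1}) \to \mathrm{Hom}_\PreStk(M_u^{N_c}, \A{1})$. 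Applying this to the matrix entries of $\bR|_{K_u^\et \cap N_c}$, which match those of the trivial representation on $M_u^{N_c}$ by the previous paragraph, gives $\bR|_{K_u^\et \cap N_c} = \mathrm{id}$, hence the desired factorisation of $\bR$ through $K^\cth$ and completing the proof.
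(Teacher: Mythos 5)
Your proposal follows essentially the same route as the paper's proof: uniqueness by applying Lemma~\ref{restriction is injective} to matrix coefficients, and existence by a $\BG_m$-homogeneity argument on polynomial automorphisms forcing triviality on a deep congruence subgroup, followed by factoring through $K^\cth$. Your bound $c=w+1$ is the paper's $\alpha_0 = d_m - d_1 + 1$; the only organisational difference is that the paper runs the weight computation on the full monoid $M_u$ (showing each matrix coefficient is homogeneous of degree $d_i-d_j$, hence involves only the variables $a^k_J$ with $|J|\le\alpha_0$ and extends to a polynomial function on $K_u$) rather than directly on the submonoid $M^{N_c}$, which is the same calculation.

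The one point to be careful about is your justification of the passage from the polynomial monoid to all of $K_u^\et\cap N_c = N_c^\et$. You claim that a ``parallel argument'' to Lemma~\ref{restriction is injective} gives injectivity of $\Hom_{\PreStk}(K_u^\et\cap N_c, \A{1}) \to \Hom_{\PreStk}(M^{N_c}, \A{1})$. But the lemma, and its proof, concern restriction of functions \emph{from the representable objects} $K$, $K_u$, $G$, $N_c$: the argument works precisely because the source is an (ind)scheme whose functions are explicitly described as compatible families of polynomials, and no such description is available for the non-representable prestack $N_c^\et$, so the proof does not transfer verbatim and the statement you invoke is strictly stronger than what the lemma provides. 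The paper's own Step 2 is admittedly terse at exactly this point (it deduces constancy of $\bR$ on $N_c^\et$ from Step 1 together with the lemma applied to the representable groups), so your write-up is not less complete than the paper's; but if you wish to phrase the density step as an injectivity of restriction out of $N_c^\et$ itself, that needs a separate argument rather than a citation of a ``parallel'' one.
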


\begin{proof}
Choose a basis $\{v_1, \ldots, v_m\}$ of $V$ such that the action of $\BG_m \emb GL_n \emb K^\et$ is diagonal:
\begin{align*}
z \mapsto \left( 
\begin{array}{cccc}
z^{d_1} & & \\
        & \ddots & \\
        &        & z^{d_m}
         \end{array}
         \right)
\end{align*}
with $d_1 \le d_2 \le \ldots \le d_m$ an increasing sequence of integers.

Now consider the restriction of $\bR$ to the monoid $M_u = \colim_{\alpha}M_{u, \alpha} \emb K^\et_u$, and let $\bR_{ij}$ denote the $(i,j)$th matrix coefficient with respect to the basis $\{v_1, \ldots, v_m\}$:
\begin{align*}
\bR_{ij}: M_u \to \A{1}.
\end{align*}
As in the proof of Proposition \ref{restriction is injective}, $\bR_{ij}$ is given by an infinite family of polynomials
\begin{align*}
\left\{f_{ij, \alpha} \in k[a^k_J]_{1< |J| \le \alpha} \right\}_{\alpha},
\end{align*}
satisfying the compatibility condition
\begin{align*}
f_{ij, \alpha +1} |_{a^k_J \equiv 0, |J|= \alpha +1} = f_{ij, \alpha}.
\end{align*}
\begin{description}
\item[Step 1]
Let $\alpha_0 = d_m - d_1 + 1$. We will show that in fact the polynomials $f_{ij, \alpha}$ depend only on the variables $\{a_1, \ldots, a_{\alpha_0}\}$. In particular, 
\begin{align*}
f_{ij, \alpha_0 + 1} = f_{ij, \alpha_0 + 2} = \cdots,
\end{align*}
and the function $\bR_{ij}$ is the restriction of a function $\overline{\bR_{ij}} : K_u \to \A{1}$, corresponding to the polynomial $f_{ij, \alpha_0 +1} \in k[a^k_J]_{1 < |J|}$.

To prove this claim, let $z \in \BG_m(k)$ be an arbitrary $k$-point, and recall that conjugation by $z$ gives maps
\begin{align*}
\gamma_z: M_u \to M_u, \qquad \gamma_{z, \alpha}: M_{u, \alpha} \to M_{u, \alpha},
\end{align*}
or equivalently maps $\gamma_z^\sharp$ and $\gamma_{z, \alpha}^\sharp$ on the corresponding algebras of functions. 

We know that for any $k$-algebra $R$ and for any $m \in M_u(\Spec{R})$, we have 
\begin{align*}
\bR(z m z^{-1}) = \bR(z) \bR(m) \bR(z)^{-1}.
\end{align*}
In terms of matrix coefficients, this implies that
\begin{align*}
\bR_{ij} \circ \gamma_z = z^{d_i - d_j} \bR_{ij},
\end{align*}
or equivalently that for every $\alpha$,
\begin{align*}
\gamma_{z, \alpha}^\sharp (f_{ij, \alpha}) = z^{d_i - d_j} f_{ij, \alpha}. 
\end{align*}
It follows that $f_{ij, \alpha}$ is homogeneous of degree $d_i - d_j$, and hence cannot depend on any variable $a^k_J$ with $|J|> d_i - d_j +1$. 

Allowing $(i, j)$ to vary, we obtain the global bound $\alpha_0 = d_m - d_1 + 1 = \max\left\{d_i - d_j +1 \right\}$ on the degree of the variables appearing in the matrix coefficients of $\bR$.

\item[Step 2] We have shown that each matrix coefficient extends to a function on $K_u$, but it is not a priori clear that these assemble to give a representation $\overline{\bR}$ of $K_u$, which in turn extends to all of $K$. We show this now. 

By Step 1 and Lemma \ref{restriction is injective}, if we take $c > \alpha_0$, the restriction of $\bR$ to $N_c^\et$ is constant. That is, $N_c^\et$ is in the kernel of the representation $\bR$, and hence we have a factorisation
\begin{center}
\begin{tikzpicture}[>=angle 90]
\matrix(a)[matrix of math nodes, row sep=3em, column sep=1em, text height=1.5ex, text depth=0.25ex]
{ K^\et &          & GL_V. \\
          & K^\cth &      \\};
\path[->, font=\scriptsize]
 (a-1-1) edge node[above]{$\bR$} (a-1-3)
 (a-2-2) edge[dashed] node[below right]{$\bR^\cth$} (a-1-3);
 
\path[->>]
 (a-1-1) edge (a-2-2);
\end{tikzpicture}
\end{center}

So we can define an extension of $\bR: K^\et \to GL_V$ to all of $K$ by defining $\overline{\bR}$ to be the composition
\begin{align*}
K \surj K^\cth \to GL_V.
\end{align*}
By applying Lemma \ref{restriction is injective} to each of the matrix coefficients of $\overline{\bR}$, we see that this extension is unique. \qed
\end{description}
\end{proof}

\begin{corollary}\label{cor: restriction is an embedding for K}
The restriction functor 
\begin{align*}
\Res_{K, K^\et}: \Rep(K) \to \Rep(K^\et)
\end{align*}
is fully faithful. Its essential image is the full subcategory $\Rep^\lf(K^\et)$ of locally finite representations of $K^\et$. 
\end{corollary}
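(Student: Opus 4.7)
The plan is to prove fully faithfulness and the characterisation of the essential image in two stages, in both cases leveraging Theorem \ref{theorem: K extensions} together with the local finiteness of $\Rep(K)$ guaranteed by Observation \ref{observation: representations are locally finite}.

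For fully faithfulness, given two $K$-representations $V$ and $W$ and a $K^\et$-equivariant linear map $f : V \to W$, I will show $f$ is automatically $K$-equivariant. Injectivity of the restriction map on Hom-sets is immediate. For surjectivity, the key reduction is to the finite-dimensional setting: pick $v \in V$; by local finiteness of $V$ as a $K$-rep, $v$ lies in a finite-dimensional $K$-subrep $V_0 \subset V$, and $f(V_0)$ lands in some finite-dimensional $K$-subrep $W_0 \subset W$ obtained as the sum of finite-dimensional $K$-subreps through each basis vector of $f(V_0)$. It then suffices to prove the following assertion: for any finite-dimensional $K$-representation $U$, one has $U^K = U^{K^\et}$. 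Applying this to the finite-dimensional $K$-representation $U = \Hom(V_0, W_0)$ (with its natural conjugation action) gives that $f|_{V_0} \in U^{K^\et} = U^K$, i.e.\ $f|_{V_0}$ is $K$-equivariant. Varying $v$ shows $f$ is $K$-equivariant.

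The assertion $U^K = U^{K^\et}$ is where Lemma \ref{restriction is injective} enters. Fix a basis $\{u_1, \ldots, u_n\}$ of $U$ and let $\rho_{ij} \in \Hom_{\PreStk}(K, \A{1})$ be the matrix coefficients of the $K$-action. A vector $u = \sum_i c_i u_i$ is $K$-invariant (resp.\ $K^\et$-invariant) iff for every index $j$ the function $h_j = \sum_i c_i \rho_{ji} - c_j \in \Hom_{\PreStk}(K, \A{1})$ restricts to $0$ on $K$ (resp.\ on $K^\et$); by Lemma \ref{restriction is injective}, these two conditions are equivalent.

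For the essential image, the forward inclusion $\Res_{K, K^\et}(\Rep(K)) \subseteq \Rep^\lf(K^\et)$ is immediate, since any finite-dimensional $K$-subrep is also a finite-dimensional $K^\et$-subrep, and every $K$-rep is locally finite. Conversely, let $V \in \Rep^\lf(K^\et)$ and write $V = \colim_\alpha V_\alpha$ as the filtered colimit of its finite-dimensional $K^\et$-subreps. Theorem \ref{theorem: K extensions} endows each $V_\alpha$ with a unique structure of $K$-representation extending its $K^\et$-structure. By the fully faithfulness just proved, the $K^\et$-equivariant inclusions $V_\alpha \hookrightarrow V_\beta$ (for $V_\alpha \subseteq V_\beta$) are automatically $K$-equivariant, so the colimit $V = \colim_\alpha V_\alpha$ inherits a $K$-representation structure whose restriction to $K^\et$ recovers the given $V$.

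The only step requiring care is the assertion $U^K = U^{K^\et}$ for finite-dimensional $K$-reps $U$; everything else is formal manipulation of colimits and sub-representations. This step is exactly where the density of $K^\et$ in $K$ (made precise by Lemma \ref{restriction is injective}) does the work, and it is the reason the matrix-coefficient argument of Theorem \ref{theorem: K extensions} was set up in the form needed here.
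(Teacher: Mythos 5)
Your proof is correct, and in substance it follows the paper's route: your essential-image argument (decompose $V \in \Rep^\lf(K^\et)$ into its finite-dimensional $K^\et$-subrepresentations, extend each uniquely by Theorem \ref{theorem: K extensions}, and glue using compatibility of the extensions on inclusions) is the same as the paper's, which glues via uniqueness of the extensions on intersections. The only divergence is in full faithfulness: the paper argues that a $K^\et$-equivariant map is automatically $K$-equivariant by observing that for sufficiently large $c$ the actions factor through $K^\cth$ and the relevant diagram commutes (implicitly using surjectivity of $K^\et(S) \to K^\cth(S)$), whereas you first reduce to finite-dimensional $K$-subrepresentations $V_0, W_0$ via Observation \ref{observation: representations are locally finite} and then prove $U^{K}=U^{K^\et}$ for the finite-dimensional conjugation representation $U=\Hom(V_0,W_0)$, applying Lemma \ref{restriction is injective} coordinatewise to the invariance equations. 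Your mechanism is essentially the one the paper itself deploys for the analogous statement about $G$ (Corollary \ref{cor: restriction is an embedding for G}), and it has the small advantage of treating infinite-dimensional $V$ and $W$ explicitly, where the paper's phrase ``for sufficiently large $c$'' only makes literal sense after a reduction to finite-dimensional pieces like yours. Both arguments rest on the same two inputs---Theorem \ref{theorem: K extensions} and the density statement of Lemma \ref{restriction is injective}---so the proofs are essentially equivalent.
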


\begin{proof}
Let $(V,\overline{\bR})$ and $(W,\overline{\bS})$ be two representations of $K$, and let $(V, \bR), (W, \bS)$ denote their image in $\Rep(K^\et)$. We consider the map
\begin{align*}
\Hom_K(V, W) \to \Hom_{K^\et}(V, W).
\end{align*}
It is clear that this is injective. To see that it is surjective, notice that a map $V \to W$ compatible with $\bR$ and $\bS$ is necessarily compatible with the extensions $\overline{\bR}$ and $\overline{\bS}$ of the representations to $K$: this amounts to the fact that for sufficiently large $c$, we have a commutative diagram
\begin{center}
\begin{tikzpicture}[>=angle 90]
\matrix(b)[matrix of math nodes, row sep=3em, column sep=3em, text height=1.5ex, text depth=0.25ex] 
{ K^\cth & GL_V \\
  GL_W   & \End(V, W).\\};
\path[->, font=\scriptsize]
(b-1-1) edge node[above]{$\bR^\cth$} (b-1-2)
        edge node[left]{$\bS^\cth$} (b-2-1)
(b-2-1) edge (b-2-2)
(b-1-2) edge (b-2-2);
\end{tikzpicture}
\end{center}
So $\Res_{K,K^\et}$ is fully faithful as claimed. 

To identify the essential image, first recall that because $K$ is an affine group scheme, all of its representations are locally finite: any representation $V$ can be written as a union of finite-dimensional representations. This clearly still gives a decomposition of $V$ when we restrict to the action of $K^\et$, and so the essential image is a subcategory of $\Rep^\lf(K^\et)$. 

On the other hand, suppose that $(V, \bR) \in \Rep^\lf(K^\et)$. Then we can write $V = \bigcup_i V_i$, where $V_i \subset V$ is a finite-dimensional subrepresentation of $K^\et$. Let $\bR_i: K^\et \to GL_{V_i}$ denote the restriction of $\bR$ to the subspace $V_i$. Since $V_i$ is finite-dimensional, Theorem \ref{theorem: K extensions} provides us with a unique extension of $\bR_i$:
\begin{align*}
\overline{\bR_i}: K \to GL_{V_i}.
\end{align*}
The uniqueness of these extensions means that they agree on intersections $V_i \cap V_j$, and hence give a representation
\begin{align*}
\overline{\bR}: K \to GL_V.
\end{align*}
By construction, $\Res_{K, K^\et} (V, \overline{\bR}) = (V, \bR)$ and hence $(V, \bR)$ is indeed in the essential image of $\Res_{K, K^\et}$. \qed
\end{proof}

\begin{rmk}
At the time of writing, we do not know whether there exist any representations of $K^\et$ which are \emph{not} locally finite. If there are no such representations, then the functor $\Res_{K, K^\et}: \Rep(K) \to \Rep(K^\et)$ is an equivalence. 
\end{rmk}

\subsection{Non-locally-finite representations}
\label{subsec: non-locally finite representations}
In this section, we work in a similar set-up to show that it is possible to have a group-valued prestack which is dense in a pro-algebraic group and which still has representations which are not locally finite. This means that if the functor $\Res_{K, K^\et}$ is to be an equivalence, and all representations of $K^\et$ are locally finite, it is a property very particular to these group-valued prestacks. 

Consider $\A{\infty} = \displaystyle\colim_{i \in \BN} {\A{i}} \emb \overline{\BA^{\infty}} = \displaystyle\lim_{i \in \BN} {\A{i}}$, viewed as additive groups in the obvious way: for $S = \Spec{R}$, $\A{\infty}(S)$ is the set of finite sequences in $R$, while $\overline{\A{\infty}}(S)$ is the set of infinite sequences, both equipped with term-wise addition. 

Just as in the situation of $K^\et \emb K$ the maps from the subgroup to the finite-dimensional quotients of the pro-group are all surjective: we have
\begin{align*}
\A{\infty} \surj \A{c},
\end{align*}
corresponding to truncating the sequences at the $c$th term. Restriction of functions from $\overline{\A{\infty}}$ to $\A{\infty}$ is still injective, by the same argument as in the proof of Lemma \ref{restriction is injective}.

Now consider the regular representation of $\A{\infty}$: this is the infinite-di\-men\-sion\-al vector space 
\begin{align*}
V&= \Hom(\A{\infty}, \A{1})\\
 &= \lim_{n \in \BN} (k[t_1, \ldots, t_n]),
\end{align*} 
with the action of $\A{\infty}$ given by precomposition with the addition map. Consider the element $v \in V$ given by the infinite family of polynomials
\begin{align*}
f_n = \sum_{j=1}^n \prod_{i=1}^j t_i \in k[t_1, \ldots, t_n].
\end{align*}

Then the subspace generated by $v \in V$ under the action of $\A{\infty}(k)$ is infinite-dimensional. For example, if ${\bf e}_{i_0} \in \A{\infty}(k)$ is the sequence with $1$ in the $i_0$th position and 0 everywhere else, then ${\bf e}_{i_0}.v$ is given by the infinite sequence of polynomials
\begin{align*}
\sum_{j=1}^n \prod_{\substack{i=1\\i \ne i_0}}^j t_i,
\end{align*}
and so the ${\bf e}_{i_0}.v$ are linearly independent as $i_0$ varies. Hence the representation $V$ is not locally finite. 

We conclude that
\begin{align*}
\Rep^\lf(\A{\infty}) \subsetneq \Rep(\A{\infty}).
\end{align*}

On the other hand, there even exist finite-dimensional representations of $\A{\infty}$ which do not extend to $\overline{\A{\infty}}$, which cannot happen for the case of $K^\et \emb K$. Indeed, consider the two-dimensional unipotent representation corresponding to the assignment
\begin{align*}
(x_i)_i \mapsto \left( \begin{array}{cc}
1 & \displaystyle{\sum_i x_i} \\
0 & 1 \\
\end{array}
\right).
\end{align*}
It cannot be extended to $\overline{\A{\infty}}$.

\subsection{Representations of \texorpdfstring{$G^\et$}{G-etale}}
\label{subsec: representations of G-etale}
In this subsection, we compare the representations of $G$ to those of $G^\et$. We distinguish the full subcategory $\Rep^{K^\et\text{-}\lf}(G^\et)\subset \Rep(G^\et)$ of representations of $G^\et$ which are locally finite when viewed as representations of $K^\et$ via the inclusion of $K^\et$ inside $G^\et$. 

\begin{defn}
We shall refer to such representations as \emph{$K^\et$-locally-finite} representations of $G^\et$. 
\end{defn}

\begin{prop}\label{prop: G extensions}
Let $(V, \bR) \in \Rep^{K^\et\text{-}\lf}(G^\et)$ where $V$ is a vector space and $\bR: G^\et \to GL_V$. Then $\bR$ extends uniquely to a morphism $\overline{\bR}: G \to GL_V$ of group-valued prestacks. 
\end{prop}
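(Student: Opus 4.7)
The plan is to reduce to Theorem~\ref{theorem: K extensions} by analysing one $G^\et$-orbit at a time, exploiting the fact (from Section~\ref{subsec: Unipotent subgroups and polynomial submonoids}) that $N_c \subset K$. This inclusion, together with the identification $K^\et = K \cap G^\et$, gives $N_c \cap G^\et = N_c \cap K^\et =: N_c^\et$; by Corollary~\ref{cor: lifting formal isomorphisms} this is precisely the kernel of the surjection $G^\et \surj G^\cth$. Thus showing that $N_c^\et$ acts trivially on a vector $v$ is equivalent to saying that the orbit map $g \mapsto \bR(g)v$ factors through the quotient $G^\cth$ of $G$.

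First, fix $v \in V$. By $K^\et$-local finiteness, the $K^\et$-orbit of $v$ spans a finite-dimensional $K^\et$-subrepresentation $V_v \subset V$. Applying Theorem~\ref{theorem: K extensions} to $V_v$ produces $c_v \in \BN$ such that $N_{c_v}^\et$ acts trivially on $V_v$, and in particular fixes $v$. Since $N_{c_v}$ is normal in $G$ (not merely in $K$), $N_{c_v}^\et$ is normal in $G^\et$: for $n \in N_{c_v}^\et$ and $g \in G^\et$ we have $g^{-1} n g \in G^\et \cap N_{c_v} = N_{c_v}^\et$, which also fixes $v$, so
\begin{align*}
\bR(n)\,\bR(g)\,v \;=\; \bR(g)\,\bR(g^{-1} n g)\,v \;=\; \bR(g)\,v.
\end{align*}
Hence $N_{c_v}^\et$ fixes the entire $G^\et$-subrepresentation $W_v \subset V$ generated by $v$, so that $\bR|_{W_v}$ descends to a representation of $G^\et / N_{c_v}^\et \simeq G^{(c_v)}$; pulling back along $G \surj G^{(c_v)}$ endows $W_v$ with a $G$-representation structure extending $\bR|_{W_v}$.

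To assemble these into a $G$-action on all of $V$, note that for any $v_1, v_2 \in V$, taking $c = \max(c_{v_1}, c_{v_2})$ shows that $W_{v_1}$ and $W_{v_2}$ are both $G^{(c)}$-subrepresentations, and so the two extensions agree on overlaps as pullbacks of the same $G^{(c)}$-action on $W_{v_1} + W_{v_2}$. They therefore glue to a well-defined morphism of group-valued prestacks $\overline{\bR}: G \to GL_V$, with naturality in the base scheme inherited from each of the intermediate constructions. Uniqueness is automatic: any two extensions agree on $G^\et$ by hypothesis, so their matrix coefficients against elements of $V^*$ restrict to the same function on $G^\et$, and Lemma~\ref{restriction is injective} forces them to agree on all of $G$. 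The crux of the argument is the normality of $N_c$ inside $G$: this is what turns the single-vector statement produced by Theorem~\ref{theorem: K extensions} into a uniform condition on the whole $G^\et$-orbit, enabling the descent to $G^{(c_v)}$ and the subsequent pullback to $G$.
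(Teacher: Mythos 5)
Your proof is essentially correct, but it takes a genuinely different route from the paper's. The paper first extends $\bS \defeq \bR|_{K^\et}$ to $\overline{\bS}: K \to GL_V$ using Corollary \ref{cor: restriction is an embedding for K}, then uses the pointwise factorisation $G(\Spec{R}) = G^\et(\Spec{R})\cdot K(\Spec{R})$ --- every continuous automorphism is a polynomial (hence \'etale-type) automorphism composed with one whose constant terms vanish --- to define $\overline{\bR}(xk) \defeq \bR(x)\overline{\bS}(k)$, checking well-definedness and functoriality by hand and then establishing the homomorphism property via matrix coefficients and the density Lemma \ref{restriction is injective}. You instead write $V$ as a union of $G^\et$-subrepresentations on which the action kills $N_c^\et$ and hence factors through $G^\cth$, pull back along $G \surj G^\cth$, and glue; density enters only for uniqueness. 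Your route makes the homomorphism property automatic on each piece and in effect shows directly that a $K^\et$-locally-finite representation of $G^\et$ lies in $\colim_{c \in \BN}\Rep(G^\cth) \simeq \Rep(G)$, which is how the proposition is exploited later anyway; the paper's route avoids any discussion of generated subrepresentations and gluing. (Both arguments quietly use $K \cap G^\et = K^\et$: the paper when it declares $(x^\prime)^{-1}x = k^\prime k^{-1}$ to be a point of $K^\et$, you when you identify $N_c \cap G^\et$ with $N_c^\et$.)

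One step needs to be filled in. Your displayed identity only shows that $N_{c_v}^\et(\Spec{R})$ fixes the orbit vectors $\bR_R(g)(v \otimes 1) \in V \otimes_k R$; it does not by itself show that the $G^\et$-subrepresentation $W_v \subseteq V$ generated by $v$ is fixed pointwise, since elements of $W_v$ are $k$-coefficients of such orbit vectors rather than orbit vectors themselves. The standard repair is to prove that the fixed subspace $V^{N_{c}^\et} = \{\, w \in V \mid \bR_A(n)(w \otimes 1) = w \otimes 1 \text{ for all } A \text{ and } n \in N_c^\et(\Spec{A}) \,\}$ is a $G^\et$-subrepresentation: for $w$ fixed and $g \in G^\et(\Spec{R})$, base change $g$ and an arbitrary $n \in N_c^\et(\Spec{A})$ to $R \otimes_k A$, apply normality exactly as in your computation, and compare coefficients against a $k$-basis of $R$ to conclude that each coefficient of $\bR_R(g)(w \otimes 1)$ is again fixed. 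Since $V_v \subseteq V^{N_{c_v}^\et}$, it follows that $W_v \subseteq V^{N_{c_v}^\et}$, which is what your descent to $G^{(c_v)}$ requires. In fact, once this lemma is in place you can dispense with generated subrepresentations altogether: the subspaces $V^{N_c^\et}$ form an exhaustive nested family of $G^\et$-subrepresentations, each factoring through $G^\cth$, and the gluing is then along a nested union. With that addition the argument is complete.
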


\begin{proof}
Let $\bS: K^\et \to V$ be the composition of $\bR$ with the inclusion $K^\et \emb G^\et$. Then $(V, \bS)$ is a locally finite representation of $K^\et$, and hence extends uniquely to a representation $\overline{\bS}: K \to GL_V$ by Corollary \ref{cor: restriction is an embedding for K}.

Consider the following diagram of group-valued prestacks:
\begin{center}
\begin{tikzpicture}[>=angle 90]
\matrix(b)[matrix of math nodes, row sep=1em, column sep=1em, text height=1.5ex, text depth=0.25ex]
{ K^\et & & K & \\
\\
  G^\et & & G & \\
        & &   & GL_V .\\};
\path[right hook->, font=\scriptsize]
 (b-1-1) edge (b-1-3)
         edge (b-3-1)
 (b-3-1) edge (b-3-3)
 (b-1-3) edge (b-3-3);
\path[->, font=\scriptsize]
 (b-3-1) edge[bend right=15] node[below]{$\bR$} (b-4-4)
 (b-1-3) edge[bend left=20] node[right]{$\overline{\bS}$} (b-4-4);
\path[->,dashed, font=\scriptsize]
 (b-3-3) edge node[above right]{$\overline{\bR}$} (b-4-4);     
\end{tikzpicture}
\end{center}
We wish to define the group homomorphism $\overline{\bR}$ completing this diagram, and to show that it is unique. 

If there is to be a function $\overline{\bR}$ which respects the group structures as in the above diagram, it must satisfy
\begin{align} \label{eq: uniqueness of R}
\overline{\bR}(xk) = \bR(x)\overline{\bS}(k)
\end{align}
for every $k$-algebra $R$ and for all $R$-points $x \in G^\et(\Spec{R}), k \in K(\Spec{R})$. Notice that the map
\begin{align*}
G^\et \times K \to G
\end{align*}
given by composition of automorphisms (i.e. multiplication inside $G$) is surjective on $(\Spec{R})$-points. It follows that equation (\ref{eq: uniqueness of R}) determines $\overline{\bR}$ uniquely, if it exists. We need to prove that the assignment
\begin{align*}
xk \mapsto \bR(x)\overline{\bS}(k)
\end{align*}
gives a well-defined function $G(\Spec{R}) \to GL(V \otimes_k R)$, and moreover that this is functorial in the $k$-algebra $R$. 

For the first point, suppose that $xk = x^\prime k^\prime$ for some $x, x^\prime \in G^\et(\Spec{R})$, $k, k^\prime \in K(\Spec{R})$. Then $(x^\prime)^{-1} x = k^\prime k^{-1}$, and this is an element of $K^\et(\Spec{R})$, so that
\begin{align*}
\bR(x^\prime)^{-1}\bR(x) = \bS(x^\prime)^{-1} \bS(x) = \bS(k^\prime) \bS(k)^{-1} = \overline{\bS}(k^\prime) \overline{\bS}(k)^{-1},
\end{align*}
and hence
\begin{align*}
\bR(x) \overline{\bS}(k) = \bR(x^\prime) \overline{\bS}(k^\prime),
\end{align*}
as required. Functoriality is also straightforward to check. 

We conclude that there is a unique morphism of prestacks $\overline{\bR}: G \to GL_V$ making the above diagram commute. It remains to show that this morphism respects the group structures. Fix a (possibly infinite) basis $\{v_i\}_{i \in I}$ for $V$, and for any pair $(i,j) \in I \times I$ define the projection
\begin{align*}
\pi_{ij}: GL_V \to \A{1}
\end{align*} 
in the obvious way. To show that $m_{GL_V} \circ \overline{\bR} = (\overline{\bR} \times \overline{\bR}) \circ m_G$, it suffices to show that
\begin{align*}
\pi_{ij} \circ m_{GL_V} \circ \overline{\bR} = \pi_{ij} \circ (\overline{\bR} \times \overline{\bR}) \circ m_G  : G \times G \to \A{1}
\end{align*}
for every pair $(i,j)$. By Lemma \ref{restriction is injective}, it suffices to show that these functions agree when restricted to $G^\et \times G^\et$; but this is clear because the restriction of $\overline{\bR}$ to $G^\et$ is the homomorphism $\bR$. \qed
\end{proof}

We have the following analogue to Corollary \ref{cor: restriction is an embedding for K}:
\begin{corollary} \label{cor: restriction is an embedding for G}
The restriction functor
\begin{align*}
\Res_{G,G^\et}: \Rep(G) \to \Rep(G^\et)
\end{align*}
is fully faithful, with essential image $\Rep^{K^\et\text{-}\lf}(G^\et)$.
\end{corollary}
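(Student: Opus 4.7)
The plan is to mirror the proof of Corollary~\ref{cor: restriction is an embedding for K}, with Proposition~\ref{prop: G extensions} playing the role that Theorem~\ref{theorem: K extensions} played there. The main work has already been done: full faithfulness will reduce to the uniqueness part of that proposition (together with Corollary~\ref{cor: restriction is an embedding for K} applied to the subgroup $K^\et \emb G^\et$), and essential surjectivity to its existence part.

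\textbf{Full faithfulness.} Let $(V, \overline{\bR}), (W, \overline{\bS}) \in \Rep(G)$, with images $(V, \bR), (W, \bS)$ under $\Res_{G,G^\et}$. The map $\Hom_G(V,W) \to \Hom_{G^\et}(V,W)$ is obviously injective. For surjectivity, I would take $\phi \in \Hom_{G^\et}(V,W)$ and show it is automatically $G$-equivariant. Restricted along $K^\et \emb G^\et$, $\phi$ is $K^\et$-equivariant; by the fully faithful part of Corollary~\ref{cor: restriction is an embedding for K} applied to the $K$-representations $V,W$ obtained by restriction from $G$, $\phi$ is then $K$-equivariant. Now, as observed in the proof of Proposition~\ref{prop: G extensions}, every $g \in G(\Spec R)$ admits a factorisation $g = xk$ with $x \in G^\et(\Spec R)$ and $k \in K(\Spec R)$, and the uniqueness of the extension gives $\overline{\bR}(g) = \bR(x)\,\overline{\bR|_K}(k)$ and likewise for $\overline{\bS}$. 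Combining $G^\et$-equivariance with $K$-equivariance then yields $\phi\,\overline{\bR}(g) = \overline{\bS}(g)\,\phi$.

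\textbf{Essential image.} One inclusion is immediate: any $V \in \Rep(G)$ restricts to a representation of the affine group scheme $K$, which is locally finite by Observation~\ref{observation: representations are locally finite}; since a finite-dimensional $K$-subrepresentation is in particular a finite-dimensional $K^\et$-subrepresentation, $\Res_{G,G^\et}(V)$ lies in $\Rep^{K^\et\text{-}\lf}(G^\et)$. For the reverse inclusion, given $(V, \bR) \in \Rep^{K^\et\text{-}\lf}(G^\et)$, Proposition~\ref{prop: G extensions} produces a unique extension $\overline{\bR}\colon G \to GL_V$, which by construction satisfies $\Res_{G,G^\et}(V,\overline{\bR}) = (V, \bR)$.

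The argument is essentially an assembly of results already proved, so I do not anticipate a real obstacle. The only subtle point is making sure the uniqueness clause of Proposition~\ref{prop: G extensions} is invoked correctly in the surjectivity step of full faithfulness, so that a morphism equivariant on both $G^\et$ and $K$ is automatically equivariant on the whole of $G$; everything else is bookkeeping.
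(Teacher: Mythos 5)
Your proof is correct, and the essential-image part coincides with the paper's (which treats it as immediate from Proposition \ref{prop: G extensions} and local finiteness of $K$-representations). For full faithfulness, however, you take a genuinely different route. The paper argues directly: given a $G^\et$-equivariant map $f$, it compares the two induced maps $G \to \End(V_1,V_2)$ matrix-coefficient by matrix-coefficient and invokes Lemma \ref{restriction is injective} (density of the polynomial monoid, hence of $G^\et$, in $G$ at the level of functions) to conclude they agree on all of $G$ because they agree on $G^\et$. You instead first upgrade $K^\et$-equivariance to $K$-equivariance via the already-proved Corollary \ref{cor: restriction is an embedding for K}, and then use the pointwise surjectivity of the multiplication map $G^\et \times K \to G$ (recorded in the proof of Proposition \ref{prop: G extensions}) to propagate equivariance to every $R$-point $g = xk$. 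Both arguments are sound; the paper's requires one more run of the density lemma but is entirely function-theoretic, while yours is more structural, exploiting the decomposition $G = G^\et\cdot K$ that also underlies the construction of the extension $\overline{\bR}$, and so keeps all density considerations confined to the $K$-case. One small remark: in your surjectivity step you do not actually need the uniqueness clause of Proposition \ref{prop: G extensions} — the identity $\overline{\bR}(xk) = \bR(x)\,\overline{\bR}|_K(k)$ is just multiplicativity of the homomorphism $\overline{\bR}$ together with $\overline{\bR}|_{G^\et} = \bR$, since here $\overline{\bR}$ is a given representation of $G$ rather than a constructed extension.
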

\begin{proof}
The only part that is not immediate is the surjectivity of the maps of hom-spaces. For $t=1,2$, let $(V_t, \overline{\bR}_t)$ be representations of $G$, with $(V_t, \bR_t)$ the restrictions to $G^\et$. Suppose that we have a linear map $f: V_1 \to V_2$ compatible with the maps $\bR_t$; then we need to show that it is also compatible with $\overline{\bR}_t$. That is, we need to show that the following diagram commutes:
\begin{center}
\begin{tikzpicture}[>=angle 90]
\matrix(b)[matrix of math nodes, row sep=3em, column sep=3em, text height=1.5ex, text depth=0.25ex] 
{ G         & GL_{V_1} \\
  GL_{V_2}  & \End(V_1, V_2).\\};
\path[->, font=\scriptsize]
(b-1-1) edge node[above]{$\overline{\bR}_1$} (b-1-2)
        edge node[left]{$\overline{\bR}_2$} (b-2-1)
(b-2-1) edge (b-2-2)
(b-1-2) edge (b-2-2);
\end{tikzpicture}
\end{center}
This is similar to the argument in the last part of the proof of  Proposition \ref{prop: G extensions}. Fix bases $\{v_i\}_{i \in I}$ and $\{w_j\}_{j \in J}$ for $V_1$ and $V_2$; then it suffices to show that the diagram commutes after composing with the $(i,j)$th projection $\End(V_1, V_2) \to \A{1}$ for 
every $(i, j) \in I \times J$. But then by Lemma \ref{restriction is injective} once more, it is enough to show that each of the resulting diagrams commutes after pre-composing with the inclusion $G^\et \emb G$, and this is true by our assumption on $f$. \qed
\end{proof}

\section{Universal modules}
\label{sec: universal modules}

In this section, we finally begin our analysis of universal $\CD$-modules. We will see that they are equivalent to quasi-coherent sheaves on the stack $\varietiesc{\infty}$ of \'etale germs. Similarly, we will see that universal $\CO$-modules are just quasi-coherent sheaves on the stack $\pointedvarietiesc{\infty}$. We begin by recalling the definitions of universal modules, given by Beilinson and Drinfeld (see \cite{BD1} 2.9.9); in \ref{subsec: from universal D modules to quasi-coherent sheaves}, \ref{subsec: from quasi-coherent sheaves to universal D-modules}, and \ref{subsec: compatibility of the functors} we prove that the category $\univcatD$ of universal $\CD$-modules of dimension $n$ is equivalent to $\QCoh{\varietiesc{\infty}}$. In \ref{subsec: the O-module setting} we discuss the corresponding equivalence in the case of universal $\CO$-modules. 

\begin{defn}\label{defn: universal O module}
A \emph{universal $\CO$-module} $\SF$ of dimension $n$ consists of the following data:
\begin{enumerate}
\item For each smooth family $X \xrightarrow{\pi} S$ of relative dimension $n$, we have an $\CO_X$-module $\SF_{X/S} \in \QCoh{X}$.
\item For each fibrewise \'etale morphism $f=(f_X,f_S): (X/S) \to (X^\prime/S^\prime)$ of smooth $n$-dimensional families, we have an isomorphism 
\begin{align*}
\SF(f): \univ \xrightarrow{\sim} (f_X)^*\univv 
\end{align*}
of $\CO_{X}$-modules. 
\end{enumerate}
These isomorphisms are required to be compatible with composition in the following sense. Suppose we are given three smooth $n$-dimensional families with fibrewise \'etale morphisms between them:
\begin{align*}
X/S \xrightarrow{f} X^\prime/S^\prime \xrightarrow{g} X^{\prime\prime}/S^{\prime\prime}.
\end{align*}
Then we require that $\SF(g \circ f)$ agrees with the composition $f_X^*\SF(g) \circ \SF(f)$.
\end{defn}

A morphism $\phi: \SF \to \SG$ of universal $\CO$-modules is a collection of morphisms 
\begin{align*}
\phi_{X/S}: \SF_{X/S} \to \SG_{X/S},
\end{align*}
indexed by $n$-dimensional families, compatible with the structure isomorphisms. 

In this way we obtain a category $\univcat$ of universal $\CO$-modules of dimension $n$. Similarly, we can define the category $\univcatD$ of universal $\CD$-modules of dimension $n$:

\begin{defn}\label{defn: universal D module}
A \textit{universal $\CD$-module} of dimension $n$ is a rule $\SF$ which assigns:
\begin{enumerate}
\item to each smooth $X \to S$ of relative dimension $n$ a left  $\CD_{X/S}$-module $\SF(X/S)$;
\item to each fibrewise \'etale morphism $f=(f_X, f_S):  (X/S) \to (X^\prime/S^\prime)$ of smooth $n$-dimensional families, an isomorphism
\begin{align*}
\SF(f): \SF(X/S) \EquivTo f^*\SF(X^\prime/S^\prime),
\end{align*}
in a way compatible with composition.
\end{enumerate}
\end{defn}

Let us be a little more precise. The category $\CD(X/S)$ is (by definition) the category of quasi-coherent sheaves on $(X/S)_\dR$, where $(X/S)_\dR$ is the following fibre product:\footnote{Recall that we associate to any prestack $\CY$ its \emph{de Rham prestack} $\CY_\dR$:
\begin{align*}
\CY_\dR: T \mapsto \CY_\dR(T) \defeq \CY(T_\red).
\end{align*}
Then we define the category of \emph{left $\CD$-modules} on $\CY$ to be $\QCoh{\CY_\dR}$. The forgetful functor from $\CD$-modules to $\CO$-modules is given by pullback along the natural map $\fdR{\CY}: \CY \to \CY_\dR$.} 
\begin{center}
\begin{tikzpicture}[>=angle 90]
\matrix(i)[matrix of math nodes, row sep=3em, column sep=3em, text height=1.5ex, text depth=0.25ex]
{(X/S)_\dR & X_\dR \\
 S         & S_\dR \\};
\path[->, font=\scriptsize]
 (i-1-1) edge (i-1-2)
         edge (i-2-1)
 (i-1-2) edge node[right]{$\pi_\dR$} (i-2-2)
 (i-2-1) edge node[below]{$\fdR{S}$} (i-2-2);
\end{tikzpicture}
\end{center}

This means that the object $\SF(X/S)$ is given by a collection of objects 
\begin{align*}
\SF(X/S)_{T \to (X/S)_\dR} \in \QCoh{T}
\end{align*}
indexed by affine schemes $T$ and morphisms $T \to (X/S)_\dR$, along with isomorphisms describing compatibility with pullbacks. (See \cite{GR} and also III.4 of \cite{GR-book}, for more details on categories of $\CD$-modules over prestacks as well as the full definition in the $(\infty,1)$-categorical and derived setting.)

If we have a fibrewise \'etale morphism of $n$-dimensional families $f=(f_X, f_S): (X/S) \to (X^\prime/S^\prime)$, we obtain a morphism $f_{X/S}:(X/S)_\dR \to (X^\prime/S^\prime)_\dR$ as follows:

\begin{center}
\begin{tikzpicture}[>=angle 90]
\matrix(j)[matrix of math nodes, row sep=1.5em, column sep=1.5em, text height=1.5ex, text depth=0.25ex]
{ (X/S)_\dR &                         & X_\dR   &               \\
            & (X^\prime/S^\prime)_\dR &          & X^\prime_\dR  \\
   S        &                         &          &               \\
            & S^\prime                &          & S^\prime_\dR . \\};
\path[->, font=\scriptsize]
 (j-1-1) edge (j-1-3)
 (j-1-3) edge node[above right]{$f_{X, \dR}$} (j-2-4)
 (j-1-1) edge (j-3-1)
 (j-3-1) edge node[below left]{$f_S$} (j-4-2)
 
 (j-2-2) edge (j-2-4)
         edge (j-4-2)
 (j-2-4) edge node[right]{$\pi^\prime_\dR$} (j-4-4)
 (j-4-2) edge node[below]{$\fdR{S}$} (j-4-4);
\path[dashed, ->, font=\scriptsize]
 (j-1-1) edge node[below left]{$f_{X/S}$} (j-2-2);
\end{tikzpicture}
\end{center}
Then the compatibility isomorphism $\SF(f)$ associated to $\SF$ is an isomorphism between $\SF(X/S)$ and $f_{X/S}^*\SF(X^\prime/S^\prime)$ in $\QCoh{(X/S)_\dR}$.  

\begin{notation}
We will always use the subscript $\bullet_{X/S}$ to denote the morphism of relative de Rham prestacks induced by a fibrewise \'etale morphism between two smooth families, even when neither of the smooth families involved is actually denoted by $X/S$.
\end{notation}

\begin{rmk}\label{rmk: left vs right relative D-modules}
At this stage, the reader may wonder why we have chosen to use \emph{left} relative $\CD$-modules, rather than \emph{right}, which is the more usual category in which to work. (See for example the discussion of the category $\IndCoh{(X/S)_\dR}$ of \emph{relative crystals} in III.4, Section 3.3 of \cite{GR-book}.) We have several reasons for this choice.
\begin{enumerate}
\item We wish, at least for the moment, to remain consistent with the definition of universal $\CD$-module given by Beilinson and Drinfeld.
\item Also for the moment, we wish to work with abelian categories rather than DG-categories; the $!$-pullback functors needed in the definition of the category of ind-coherent sheaves on a prestack are inherently derived. 
\item Suppose for the sake of argument that we are working with DG-categories rather than abelian categories. As will be seen in the following sections, we are going to compare universal $\CD$-modules to sheaves on the stack $\varietiesc{\infty}$. 
If we work with quasi-coherent sheaves, we have no problems, because $\QCoh{\CY}$ is defined for an arbitrary prestack. On the other hand, $\IndCoh{\CY}$ is defined only for prestacks which are \emph{locally of finite type}; in particular, the definition does not make sense for the stack $\varietiesc{\infty}$.  

\end{enumerate}

\end{rmk}

\begin{thm}\label{thm: universal D-modules are quasi-coherent sheaves}
The category $\univcatD$ of universal $\CD$-modules of dimension $n$ is equivalent to the category of quasi-coherent sheaves on $\varietiesc{\infty}$. 
\end{thm}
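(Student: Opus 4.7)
The plan is to construct a pair of mutually quasi-inverse functors between $\univcatD$ and $\QCoh{\varietiesc{\infty}}$. Since taking quasi-coherent sheaves is insensitive to stackification, I would work with the equivalent category $\QCoh{\prestackVbc{\infty}}$: its objects are collections $\{M_{X\rightleftarrows S} \in \QCoh{S}\}$ indexed by pointed $n$-dimensional families, equipped with pullback isomorphisms compatible with common \'etale neighbourhoods modulo $(\infty)$-equivalence.

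First I would define $\Psi\colon \univcatD \to \QCoh{\varietiesc{\infty}}$ by setting
\[
\Psi(\SF)_{X \rightleftarrows S} := \tilde\sigma^*\SF(X/S),
\]
where $\tilde\sigma\colon S \to (X/S)_\dR = X_\dR \times_{S_\dR} S$ is the canonical map induced by the pair $(\sigma_\dR \circ \fdR{S},\, \id_S)$. The main verification is that this data is compatible with morphisms in $\prestackVbc{\infty}(S)$. For a common \'etale neighbourhood $(V,\phi,\psi)$ with reduced-compatible section $\tau$, the crucial observation is that $\phi \circ \tau$ agrees with $\sigma_1$ on $S_\red$, and hence the two induce the same morphism into the de Rham prestack $X_{1,\dR}$. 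This yields $\phi_{X/S} \circ \tilde\tau = \tilde\sigma_1$, and similarly for $\psi$ and $\sigma_2$. Pulling back the structural isomorphisms $\SF(\phi)$ and $\SF(\psi)$ along $\tilde\tau$ then gives a canonical isomorphism $\Psi(\SF)_{X_1 \rightleftarrows S} \simeq \Psi(\SF)_{X_2 \rightleftarrows S}$. Independence of the representative of the $(\infty)$-equivalence class reduces to similarity, where the compositional axiom of a universal $\CD$-module applies, followed by Zariski-localisation; compatibility with composition of morphisms follows from compatibility of the universal $\CD$-module with composition of fibrewise \'etale maps.

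Next I would construct the inverse $\Phi\colon \QCoh{\varietiesc{\infty}} \to \univcatD$ by pulling back $M$ along a natural map $\iota_{X/S}\colon (X/S)_\dR \to \varietiesc{\infty}$ for each smooth $n$-dimensional family $X/S$. A $T$-point of $(X/S)_\dR$ consists of $f\colon T \to S$ and $\bar f\colon T_\red \to X$ with $\pi \circ \bar f = f|_{T_\red}$; I would send it to the pointed family $T \times_S X \rightleftarrows T$ with a section lifting $\bar f$. Such lifts exist \'etale-locally on $T$ by formal smoothness of $T \times_S X \to T$. Any two local lifts agree on $T_\red$, so by the discussion of Section~\ref{subsec: groupoids of common etale neighbourhoods} they yield isomorphic objects of $\prestackVbc{\infty}(T)$; \'etale-stackifying produces the well-defined $T$-point of $\varietiesc{\infty}$. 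Setting $\Phi(M)(X/S) := \iota_{X/S}^*M$ gives a left relative $\CD$-module, and the compatibility isomorphisms $\Phi(M)(f)$ under fibrewise \'etale morphisms arise from the functoriality of $\iota$.

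Finally I would verify that $\Psi$ and $\Phi$ are mutually quasi-inverse. For $\Psi \circ \Phi \simeq \id$, on a pointed family $(X,\sigma)$ over $S$ the section $\sigma$ itself provides a global lift, making $\iota_{X/S} \circ \tilde\sigma$ the classifying map of $(X,\sigma)$, and unwinding yields the identity. For $\Phi \circ \Psi \simeq \id$, one must compare $\SF(X/S)$ with $\iota_{X/S}^*\Psi(\SF)$ on $T$-points, using the structural isomorphism of $\SF$ along the fibrewise \'etale morphism $T \times_S X \to X$ that lies over $T \to S$. I expect the principal difficulty to lie in the construction and well-definedness of $\iota_{X/S}$: because section-lifting is inherently local, the argument must pass through the stackification, and one must verify that the assorted compatibility isomorphisms obtained from different local lifts match coherently. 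The whole framework is tuned by the observation that the $(\infty)$-equivalence relation on common \'etale neighbourhoods---which allows sections to agree only on the reduced part---precisely matches the insensitivity of the de Rham prestack to nilpotents, and this is exactly what identifies universal $\CD$-modules with quasi-coherent sheaves on $\varietiesc{\infty}$.
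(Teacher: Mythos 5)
Your proposal is correct and follows essentially the same route as the paper: two explicitly constructed quasi-inverse functors, one sending $\SF$ to $\overline{\sigma}^*\SF(X/S)$ (with well-definedness on morphisms reduced to similarity plus Zariski-localisation, exactly as in the paper), the other sending $M$ on $T$-points of $(X/S)_\dR$ to $M_{T\times_S X \rightleftarrows T}$ with a section lifting $\bar f$ --- which is precisely your pullback along $\iota_{X/S}$ --- and the same quasi-inverse checks via the canonical lift $\sigma$ and the structural isomorphism along $T\times_S X \to X$. The only (cosmetic) difference is that the paper sidesteps the difficulty you flag: it works with the intermediate prestack $\prestackVbc{\infty}$ and lifts the section globally on affine $T$ by formal smoothness, handling the non-uniqueness of the lift by a symmetric common \'etale neighbourhood, so no passage through \'etale stackification is needed for the inverse functor.
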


\begin{proof}
Recall that it suffices to show that $\QCoh{\prestackVbc{\infty}} \simeq \univcatD$. The idea behind the proof is quite simple, but there are many technical details to be checked. We proceed by defining functors in both directions and checking that they are quasi-inverse to each other. 
\end{proof}

\subsection{From universal \texorpdfstring{$\CD$}{D}-modules to quasi-coherent sheaves}
\label{subsec: from universal D modules to quasi-coherent sheaves}
First we define the functor $\theta: \univcatD \to \QCoh{\prestackVbc{\infty}}$. Given a universal $\CD$-module $\SF$, we wish to define a quasi-coherent sheaf $\theta(\SF)$ on $\prestackVbc{\infty}$. That is, given any $S$ and any morphism $(\pi,\sigma) : S \to \prestackVbc{\infty}$ representing a pointed $n$-dimensional family $(\pi: X \rightleftarrows S: \sigma)$, we need to define a quasi-coherent sheaf $\theta(\SF)_{X \rightleftarrows S}$ on $S$. We have $\SF(X/S) \in \QCoh{ (X/S)_\dR }$, and $\sigma: S \to X$ induces a section $\overline{\sigma}: S \to (X/S)_\dR$, so we simply set 
\begin{align*}
\theta(\SF)_{X \rightleftarrows S} \defeq \overline{\sigma}^* \SF(X/S).
\end{align*}

(We adopt the convention of denoting the map into the relative de Rham stack induced by a section by $\overline{\bullet}$.) Next we need to define the compatibility isomorphisms. Suppose we have a commutative diagram in $\PreStk$ of schemes mapping to $\prestackVbc{\infty}$:

\begin{center}
\begin{tikzpicture}[>=angle 90]
\matrix(e)[matrix of math nodes, row sep=1.5em, column sep=2em, text height=1.5ex, text depth=0.25ex]
{     &    &                     & & &       &  V_\alpha  &                      &         \\
  S_2 &    &\prestackVbc{\infty} & & &   X_1 &            & S_1 \times_{S_2} X_2 & X_2     \\
      & {} &                     & & &       &  S_1       &                      &         \\
  S_1 &    &   {}                & & &   S_1 &            & S_1                  & S_2.     \\};
\path[->, font=\scriptsize]
 (e-2-1) edge node[above]{$(\pi_2,\sigma_2)$} (e-2-3)
 (e-4-1) edge node[left]{$f$} (e-2-1) 
         edge node[below right]{$(\pi_1, \sigma_1)$} (e-2-3)
         
 (e-3-2) edge[-implies, double, double distance=2pt] node[right=3pt]{$\alpha$} (e-2-1)        

 (e-2-6) edge[bend left=10] node[right]{$\pi_1$} (e-4-6)
 (e-4-6) edge[bend left=10] node[left]{$\sigma_1$} (e-2-6)
 (e-1-7) edge[bend left=10] node[right]{$\rho_\alpha$} (e-3-7)
 (e-3-7) edge[densely dotted, bend left=10] node[left]{$\tau_\alpha$} (e-1-7)
 (e-2-8) edge[bend left=10] node[right]{$f^*\pi_2$}(e-4-8)
 (e-4-8) edge[bend left=10] node[left]{$f^*\sigma_2$}(e-2-8)
 (e-1-7) edge node[above left]{$\phi_{\alpha}$} (e-2-6)
         edge node[above right]{$\psi_{\alpha}$} (e-2-8)
 (e-2-9) edge[bend left=10] node[right]{$\pi_2$} (e-4-9)
 (e-4-9) edge[bend left=10] node[left]{$\sigma_2$} (e-2-9)
 
 (e-2-8) edge node[above]{$\pr_{X_2}$} (e-2-9)
 (e-4-8) edge node[below]{$f$} (e-4-9);
\path[-]
 (e-3-7) edge[double, double distance=2pt] (e-4-6)
         edge[double, double distance=2pt] (e-4-8);
\end{tikzpicture}
\end{center}

We need to specify an isomorphism 
\begin{align*}
\theta(\SF)(f, \alpha): f^*\left(\theta(\SF)_{X_2 \rightleftarrows S_2} \right) \EquivTo \theta(\SF)_{X_1 \rightleftarrows S_1};
\end{align*}
it arises naturally from the universality of $\SF$. Indeed, from the definition of $\SF$ we have isomorphisms
\begin{align*}
\SF(\phi_\alpha, \id_{S_1}) :& \SF\left(V_\alpha / S_1\right) \EquivTo \left(\phi_\alpha, \id_{S_1} \right)_{X/S}^* \SF \left(X_1 / S_1\right);\\
\SF(\pr_{X_2} \circ \psi_\alpha, f) :& \SF\left(V_\alpha / S_1\right) \EquivTo \left(\pr_{X_2} \circ \psi_\alpha, f\right)_{X/S}^* \SF \left(X_2/S_2\right).
\end{align*}
Note that $(\phi_{\alpha}, \id_{S_1})_{X/S} \circ \overline{\tau_\alpha} = \overline{\sigma_1}$ and $(\pr_{x_2} \circ \psi_{\alpha}, f)_{X/S} \circ \overline{\tau_\alpha} = \overline{\sigma_2} \circ f$, so that setting
\begin{align*}
\theta(\SF)(f, \alpha) \defeq \overline{\tau_\alpha}^*\left( \SF(\phi_\alpha, \id_{S_1}) \circ \SF(\pr_{X_2} \circ \psi_\alpha, f)^{-1} \right),
\end{align*}
we obtain an isomorphism
\begin{align*}
f^* \overline{\sigma_2}^* \SF(X_2/S_2) \EquivTo \overline{\sigma_1}^* \SF(X_1/S_1),
\end{align*}
as required.

Let us now check that $\theta(\SF)(f, \alpha)$ is independent of the choice of common \'etale neighbourhood $(V_\alpha, \phi_\alpha, \psi_\alpha)$ taken to represent the isomorphism $\alpha$ between $(X_1 \rightleftarrows S_1)$ and $(S_1 \times_{S_2} X_2 \rightleftarrows S_1)$ in $\prestackVbc{\infty}(S_1)$. It suffices to consider the case that $(V_\alpha, \phi_\alpha, \psi_\alpha)$ and $(V_\alpha^\prime, \phi_\alpha^\prime, \psi_\alpha^\prime)$ are common \'etale neighbourhoods with $\phi_\alpha^\prime = \phi_\alpha \circ g$ and $\psi_\alpha^\prime = \psi_\alpha \circ g$ for some $g: V^\prime/S_1 \to V/S_1$ \'etale and compatible with the sections on $S_{1,\red}$. Then we need to show that 
\begin{align*}
\overline{\tau}^*\left( \SF(\phi_\alpha, \id_{S_1}) \circ \SF(\psi_\alpha, \id_{S_1})^{-1} \right)=\overline{\tau}^*\left( \SF(\phi^\prime_\alpha, \id_{S_1}) \circ \SF(\psi^\prime_\alpha, \id_{S_1})^{-1} \right)
\end{align*}
as morphisms of quasi-coherent sheaves on $S_1$, and this follows immediately from the compatibility of $\SF{(\bullet)}$ with respect to composition. 

\begin{rmk}\label{rmk: not well-defined w.r.t. c-equivalence}
Note that in general this assignment is \emph{not} well-defined with respect to $(c)$-equivalence for any finite $c$. We will return to this point in Section \ref{sec: convergent and ind-finite universal modules}.
\end{rmk}

One can also check that the $\theta(\SF)(f, \alpha)$ are compatible under composition, and hence $\theta(\SF)$ is indeed an object of $\QCoh{\prestackVbc{\infty}}$. See Appendix \ref{appendix} for the details of the proof. 

The definition of $\theta$ on morphisms is straightforward: given a morphism $F: \SF \to \SG$ of universal $\CD$-modules, the morphism $\theta(F): \theta(\SF) \to \theta(\SG)$ of quasi-coherent sheaves is given by $\theta(F)_{X \rightleftarrows S}\defeq \overline{\sigma}^* (F(X/S)):$
\begin{align*}
  \theta(\SF)_{X \rightleftarrows S} = \overline{\sigma}^*(\SF(X/S)) \longrightarrow \theta(\SG)_{X \rightleftarrows S} = \overline{\sigma}^*(\SG(X/S)).
\end{align*}
This completes the construction of the functor $\theta$.

\subsection{From quasi-coherent sheaves to universal \texorpdfstring{$\CD$}{D}-modules}
\label{subsec: from quasi-coherent sheaves to universal D-modules}
Now we will construct the quasi-inverse functor 
\begin{align*}
\Psi: \QCoh{\prestackVbc{\infty}} \to \univcatD.
\end{align*}

(Some details have been omitted from the following for legibility of exposition; an extended version of the paper can be found on the arXiv, in which all details are included.)

Let $M \in \QCoh{\prestackVbc{\infty}}$, and let $X \to S$ be smooth of relative dimension $n$. We need to define an object $\Psi(M)(X/S) \in \QCoh{(X/S)_\dR}$. More precisely, for any $T \to (X/S)_\dR$, we need to define $\Psi(M)(X/S)_{T \to (X/S)_\dR}$, together with isomorphisms describing the compatibility with pullbacks. 

By definition of $(X/S)_\dR$, a morphism $T \to (X/S)_\dR$ is given by a pair of morphisms $(g,h)$ as in the following commutative diagram:
\begin{center}
\begin{tikzpicture}[>=angle 90]
\matrix(i)[matrix of math nodes, row sep=3em, column sep=3em, text height=1.5ex, text depth=0.25ex]
{T_\red & X \\
 T      & S. \\};
\path[->, font=\scriptsize]
 (i-1-1) edge node[above]{$g$}(i-1-2)
 (i-1-2) edge node[right]{$\pi$} (i-2-2)
 (i-2-1) edge node[below]{$h$} (i-2-2);
\path[right hook->, font=\scriptsize]
 (i-1-1) edge node[left]{$\redEmb{T}$}(i-2-1);
\end{tikzpicture}
\end{center}
To define an object of $\QCoh{T}$ using $M$, we need an object of $\prestackVbc{\infty}(T)$, i.e. a pointed $n$-dimensional family over $T$. An obvious candidate is $T \times_S X$, which is smooth of dimension $n$ over $T$. To define a section, note that we can define $\sigma^\circ \defeq(\redEmb{T}, g): T_\red \to T \times_S X$. Then we have the following commutative diagram:
\begin{center}
\begin{tikzpicture}[>=angle 90]
\matrix(i)[matrix of math nodes, row sep=3em, column sep=3em, text height=1.5ex, text depth=0.25ex]
{T_\red & T \times_S X \\
 T      & T \\};
\path[->, font=\scriptsize]
 (i-1-1) edge node[above]{$\sigma^\circ$}(i-1-2)
         edge[right hook->] node[left]{$\redEmb{T}$}(i-2-1)
 (i-1-2) edge (i-2-2)
 (i-2-1) edge[dashed] node[above left]{$\sigma$}(i-1-2);
\path[-]
 (i-2-1) edge[double, double distance=2pt] (i-2-2);
\end{tikzpicture}
\end{center}
where formal smoothness of $T\times_S X \to T$ allows us to lift $\sigma^\circ$ to a section $\sigma$. This gives us an object of $\prestackVbc{\infty}(T)$.

Of course, $\sigma$ is not unique, but any other choice $\sigma^\prime$ of lifting will yield an object of $\prestackVbc{\infty}(T)$ canonically isomorphic to the original one. Hence up to a canonical isomorphism, we obtain $M_{T\times_S X \rightleftarrows T} \in \QCoh{T}$, and we define 
\begin{align*}
\Psi(M)(X/S)_{T \to (X/S)_\dR} \defeq M_{T \times_S X \rightleftarrows T}.
\end{align*}

To complete the construction of $\Psi(M)(X/S) \in \QCoh{(X/S)_\dR}$, we need to specify the compatibilities under pullback. Assume we have a commutative diagram in $\PreStk$:
\begin{center}
\begin{tikzpicture}[>=angle 90]
\matrix(i)[matrix of math nodes, row sep=3em, column sep=3em, text height=1.5ex, text depth=0.25ex]
{ T_2 & (X/S)_\dR \\
 T_1 & \\};
\path[->, font=\scriptsize]
 (i-1-1) edge node[above]{$(g_2,h_2)$} (i-1-2)
 (i-2-1) edge node[left]{$f$} (i-1-1)
         edge node[below, sloped]{$(g_1,h_1)$} (i-1-2);
\end{tikzpicture}
\end{center} 
i.e. $(g_1,h_1) = f^*(g_2,h_2) = (g_2 \circ f_\red, h_2 \circ f)$. We need to exhibit an isomorphism 
\begin{align*}
f^*\left(\Psi(M)(X/S)_{T_2 \to (X/S)_\dR}\right) \EquivTo \Psi(M)(X/S)_{T_1 \to (X/S)_\dR}
\end{align*}
or equivalently $f^*M_{T_2 \times_S X \rightleftarrows T_2} \EquivTo M_{T_1 \times_S X \rightleftarrows T_1}$. To do this, it suffices to show that the following diagram commutes canonically in $\PreStk$:
\begin{center}
\begin{tikzpicture}[>=angle 90]
\matrix(i)[matrix of math nodes, row sep=3em, column sep=3em, text height=1.5ex, text depth=0.25ex]
{ T_2 & \prestackVbc{\infty} \\
 T_1 & \\};
\path[->, font=\scriptsize]
 (i-1-1) edge node[above]{$(\pr_{T_2},\sigma_2)$} (i-1-2)
 (i-2-1) edge node[left]{$f$} (i-1-1)
         edge node[below, sloped]{$(\pr_{T_1}, \sigma_1)$} (i-1-2);
\end{tikzpicture}
\end{center} 
i.e. to exhibit a canonical (up to $(\infty)$-equivalence) common \'etale neighbourhood between $T_1 \times_S X \rightleftarrows T_1$ and $T_1 \times_{T_2} (T_2 \times_S X) \rightleftarrows T_1$. The obvious candidate is
\begin{equation}\label{diagram: candidate}
\begin{tikzpicture}[>=angle 90, baseline=(current bounding box.center)]
\matrix(e)[matrix of math nodes, row sep=1.5em, column sep=3em, text height=1.5ex, text depth=0.25ex]
{              &  T_1 \times_S X  &  \\
 T_1\times_S X &                  & T_1 \times_{T_2}(T_2 \times_S X) \\
               &  T_1             &  \\
 T_1           &                  & T_1.   \\};
\path[->, font=\scriptsize]
 (e-2-1) edge[bend left=10]  (e-4-1)
 (e-4-1) edge[bend left=10] node[left]{$\sigma_1$} (e-2-1)
 (e-1-2) edge[bend left=10] (e-3-2)
 (e-2-3) edge[bend left=10]  (e-4-3)
 (e-4-3) edge[bend left=10] node[left]{$f^*\sigma_2$} (e-2-3)
 (e-1-2) edge node[above, sloped]{$\sim$} (e-2-3);
\path[-]
 (e-1-2) edge[double, double distance=2pt] (e-2-1)
 (e-3-2) edge[double, double distance=2pt] (e-4-1)
         edge[double, double distance=2pt] (e-4-3);
\path[densely dotted, ->, font=\scriptsize]
 (e-3-2) edge[bend left=10] node[left]{$\sigma_1$} (e-1-2);
\end{tikzpicture}
\end{equation}
The (reduced) commmutativity of this diagram follows from noting that $\sigma_1 \circ \redEmb{T_1} = (\redEmb{T_1},g_1)$ and $f^*\sigma_2 \circ \redEmb{T_1} = (\redEmb{T_1}, g_2 \circ f_\red)$ as maps $(T_1)_\red \to T_1 \times_S X$.

This yields the desired isomorphism\footnote{Here and in the following we suppress the \'etale morphisms and write simply $(T_1 \times_S X)$ for the common \'etale neighbourhood (\ref{diagram: candidate}).} 
\begin{align*}
M(f, T_1 \times_S X):f^*M_{T_2 \times_S X \rightleftarrows T_2} \EquivTo M_{T_1 \times_S X \rightleftarrows T_1},
\end{align*}
and the compatibility of these isomorphisms comes from the structure of $M$.  


Therefore $\Psi(M)(X/S) \in \QCoh{(X/S)_\dR}$, as claimed.

Finally, to show that $\Psi(M) \in \univcatD$, we need to define the isomorphisms $\Psi(M)(f)$ associated to fibrewise \'etale morphisms $f=(f_X,f_S): (X/S) \to (X^\prime/S^\prime)$. We need to define an isomorphism 
\begin{align*}
\Psi(M)(X/S) \EquivTo f_{X/S}^*\Psi(M)(X^\prime/S^\prime)
\end{align*}
of sheaves on $(X/S)_\dR$, i.e. a compatible family of isomorphisms
\begin{align} \label{universal D-module compatibilities for Psi}
\Psi(M)(X/S)_{T\to (X/S)_\dR} \EquivTo \left(f_{X/S}^*\Psi(M)(X^\prime/S^\prime)\right)_{T \to (X/S)_\dR}
\end{align}
for each $T \to (X/S)_\dR$.

Let $T \to (X/S)_\dR$ be the morphism corresponding to the pair $(g:T_\red \to X, h: T \to S)$. Unwinding the definitions, we see that 
\begin{align*}
\left(f_{X/S}^*\Psi(M)(X^\prime/S^\prime)\right)_{T\to (X/S)_\dR} = \Psi(M)(X^\prime/S^\prime)_{T \to (X^\prime/S^\prime)_\dR}
\end{align*}
where the morphism $T \to (X^\prime/S^\prime)_\dR$ corresponds to the pair $(f_X \circ g, f_S \circ h)$. So we can rewrite (\ref{universal D-module compatibilities for Psi}) as
\begin{align*}
M_{T \times_S X \rightleftarrows T} \EquivTo M_{T \times_{S^\prime} X^\prime \rightleftarrows T}.
\end{align*}

To define such an isomorphism, it suffices to exhibit an isomorphism of the corresponding objects in $\prestackVbc{\infty}(T)$, i.e. a common \'etale neighbourhood between $(T \times_S X \rightleftarrows T)$ and $(T \times_{S^\prime} X^\prime \rightleftarrows T)$. We can take the following representative:
\begin{center}
\begin{tikzpicture}[>=angle 90]
\matrix(e)[matrix of math nodes, row sep=1.5em, column sep=3em, text height=1.5ex, text depth=0.25ex]
{                            &  T\times_{S} X  &  \\
 T \times_S X                &                               & T\times_{S^\prime} X^\prime \\
                             &  T                            &  \\
 T                           &                               & T.   \\};
\path[->, font=\scriptsize]
 (e-2-1) edge[bend left=10]  (e-4-1)
 (e-4-1) edge[bend left=10] node[left]{$\sigma$} (e-2-1)
 (e-1-2) edge[bend left=10] (e-3-2)
 (e-2-3) edge[bend left=10]  (e-4-3)
 (e-4-3) edge[bend left=10] node[left]{$\sigma^\prime$} (e-2-3)
 (e-1-2) edge node[above, sloped]{$(\id_T,f_X)$} (e-2-3);
\path[-]
 (e-1-2) edge[double, double distance=2pt] (e-2-1)
 (e-3-2) edge[double, double distance=2pt] (e-4-1)
         edge[double, double distance=2pt] (e-4-3);
\path[densely dotted, ->, font=\scriptsize]
 (e-3-2) edge[bend left=10] node[left]{$\sigma$} (e-1-2);
\end{tikzpicture}
\end{center}
The reduced commutativity of the right side of the diagram follows from noting that $\sigma \circ \redEmb{T} = (\redEmb{T}, g)$ while $\sigma^\prime \circ \redEmb{T} = (\redEmb{T}, f_X \circ g)$. 

Because of the structure of $M$, these isomorphisms are compatible with pullback along maps $T^\prime \to T$, and hence give the desired isomorphism 
\begin{align*}
\Psi(M)(f): \Psi(M)(X/S) \EquivTo f_{X/S}^*\Psi(M)(X^\prime/S^\prime).
\end{align*}

In turn, the maps $\Psi(M)(f)$ are themselves compatible with composition. We conclude that $\Psi(M)$ is indeed a universal $\CD$-module. 

The definition of $\Psi$ on morphisms of $\QCoh{\prestackVbc{\infty}}$ is clear: a morphism $F: M \to N$ of quasi-coherent sheaves on $\prestackVbc{\infty}$ amounts to a compatible family of morphisms $F_{X \rightleftarrows S}: M_{X \rightleftarrows S} \to N_{X \rightleftarrows S} \in \QCoh{S}$ indexed by morphisms $S \to \prestackVbc{\infty}$. Then we define $\Psi(F): \Psi(M) \to \Psi(N)$ by setting
\begin{align*}
\Psi(F)(X/S)_{T \to (X/S)_\dR} : M_{T\times_S X \rightleftarrows T} \to N_{T \times_S X \rightleftarrows T}
\end{align*}
to be equal to $F_{T \times_S X \rightleftarrows T}$. It is not hard to see that this definition is compatible with pullback by morphisms $T^\prime \to T$ as well as with the structure morphisms $\Psi(M)(f)$ and $\Psi(N)(f)$ corresponding to fibrewise \'etale morphisms $f=(f_X, f_S): X^\prime/S^\prime \to X/S$. It is also immediate that $\Psi(F \circ G) =\Psi(F) \circ \Psi(G)$, and so $\Psi$ gives a functor $\QCoh{\prestackVbc{\infty}} \to \univcatD$.

\subsection{Compatibility of \texorpdfstring{$\theta$ and $\Psi$}{the functors}}
\label{subsec: compatibility of the functors}
It remains to check that $\theta$ and $\Psi$ are indeed quasi-inverse. First suppose that we have $M \in \QCoh{\prestackVbc{\infty}}$ and consider $\theta \circ \Psi(M) \in \QCoh{\prestackVbc{\infty}}$. For $(\pi:X \rightleftarrows S: \sigma) \in \prestackVbc{\infty}$, we have
\begin{align*}
\left(\theta \circ \Psi(M)\right)_{X \rightleftarrows S} = \overline{\sigma}^*\Psi(M)(X/S).
\end{align*}
Here $\overline{\sigma}: S \to (X/S)_\dR$ corresponds by definition to the pair $(\sigma \circ \redEmb{S}, \id_S)$, so it follows that 
\begin{align*}
\overline{\sigma}^*\Psi(M)(X/S) = M_{S \times_S X \rightleftarrows S},
\end{align*}
and $(S \times_S X \rightleftarrows S) \simeq (X \rightleftarrows S)$. Therefore
\begin{align*}
\left(\theta \circ \Psi(M)\right)_{X \rightleftarrows S} \simeq M_{X \rightleftarrows S},
\end{align*}
which gives the natural isomorphism between $\theta \circ \Psi$ and $\text{Id}_{\QCoh{\prestackVbc{\infty}}}$. 

Conversely, let $\SF \in \univcatD$ and consider $\Psi \circ \theta (\SF)$. Take $\pi:X \to S$ smooth of dimension $n$ and $T \to (X/S)_\dR$ corresponding to a compatible pair of morphisms $(g: T_\red \to X, h: T \to S)$. Then
\begin{align*}
\left(\Psi \circ \theta (\SF)\right)(X/S)_{T \to (X/S)_\dR} &= \theta(\SF)_{\pr_T:T \times_S X \rightleftarrows T: \sigma}\\
 &= \overline{\sigma}^* \left(\SF (T \times_S X/T )\right),
\end{align*}
where $\sigma$ is a section $T \to T\times_S X$ such that $\sigma \circ \redEmb{T} = (\id_T, g)$. Notice that $f\defeq(\pr_X, h)$ gives a fibrewise \'etale map $(T \times_S X)/T \to (X/S)$, so that we have 
\begin{align*}
\SF(f):\SF(T \times_S X/T) \EquivTo f_{X/S}^*\SF(X/S).
\end{align*} 
Finally, unwinding the definitions of $f_{X/S}$ and $\overline{\sigma}$ shows that $f_{X/S} \circ \overline{\sigma}: T \to (X/S)_\dR$ agrees with $(g,h)$; hence we have
\begin{align*}
\overline{\sigma}^* \left(\SF (T \times_S X/T )\right) &\simeq \overline{\sigma}^*f_{X/S}^*\SF(X/S) \\
 & \simeq \SF(X/S)_{T \to (X/S)_\dR}
\end{align*}
as required. These isomorphisms gives the desired natural isomorphisms between $\Psi \circ \theta$ and $\text{Id}_{\univcatD}$. The proof is complete. \hfill $\square$

\subsection{The \texorpdfstring{$\CO$}{O}-module setting}
\label{subsec: the O-module setting}
We have an analogous result in the case of universal $\CO$-modules:
\begin{thm}
\label{thm: universal O modules are sheaves}
The category $\univcat$ of universal $\CO$-modules of dimension $n$ is equivalent to the category $\QCoh{\prestackPVbc{\infty}}$ and hence to $\QCoh{\pointedvarietiesc{\infty}}$.
\end{thm}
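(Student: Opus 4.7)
The plan is to mimic the proof of Theorem \ref{thm: universal D-modules are quasi-coherent sheaves}, replacing $\CD$-modules with $\CO$-modules throughout and using the strict prestack $\prestackPVbc{\infty}$ in place of $\prestackVbc{\infty}$. The $\CO$-module argument is strictly simpler than its $\CD$-module counterpart for two independent reasons: we can work with quasi-coherent sheaves directly on $X$ rather than on the relative de Rham prestack $(X/S)_\dR$, so no passage to de Rham stacks is ever required; and because the common \'etale neighbourhoods representing morphisms in $\prestackPVbc{\infty}$ are \emph{strict}, all section data commutes on the nose, so that the liftings which in Section \ref{subsec: from quasi-coherent sheaves to universal D-modules} were produced via formal smoothness of pointed families become automatic and canonical.

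I would begin by defining a functor $\theta': \univcat \to \QCoh{\prestackPVbc{\infty}}$. Given a universal $\CO$-module $\SF$ and an $S$-point of $\prestackPVbc{\infty}$ represented by a pointed family $\pi: X \rightleftarrows S: \sigma$, set $\theta'(\SF)_{X \rightleftarrows S} \defeq \sigma^* \SF_{X/S}$. For a $2$-morphism $\alpha$ in $\prestackPVbc{\infty}(S_1)$ represented by a strict common \'etale neighbourhood $(V_\alpha, \phi_\alpha, \psi_\alpha)$ between $X_1 \rightleftarrows S_1$ and $S_1 \times_{S_2} X_2 \rightleftarrows S_1$ lying over some $f: S_1 \to S_2$, define the compatibility isomorphism by
\[
\theta'(\SF)(f, \alpha) \defeq \tau_\alpha^* \bigl( \SF(\phi_\alpha, \id_{S_1}) \circ \SF(\pr_{X_2} \circ \psi_\alpha, f)^{-1} \bigr),
\]
which is well-defined thanks to the strict identities $\phi_\alpha \circ \tau_\alpha = \sigma_1$ and $\pr_{X_2} \circ \psi_\alpha \circ \tau_\alpha = \sigma_2 \circ f$. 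Well-definedness with respect to $(\infty)$-equivalence (which for strict common \'etale neighbourhoods agrees with its strict analogue) and compatibility with composition of $2$-morphisms are verified exactly as in Section \ref{subsec: from universal D modules to quasi-coherent sheaves}, with the role previously played by $\overline{\tau_\alpha}$ now played by the genuine section $\tau_\alpha$.

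Next I would construct the quasi-inverse $\Psi': \QCoh{\prestackPVbc{\infty}} \to \univcat$. For $M \in \QCoh{\prestackPVbc{\infty}}$ and a smooth $n$-dimensional family $X \to S$, we must produce $\Psi'(M)_{X/S} \in \QCoh{X}$: to each morphism $g: T \to X$ (with induced $h = \pi \circ g: T \to S$) we associate the value $M_{T \times_S X \rightleftarrows T}$, where the pointed family structure on $T \times_S X \to T$ uses the canonical section $(\id_T, g): T \to T \times_S X$. This is the step at which strictness is indispensable, and is the place where I expect the main care to be required: the section is now literally determined by $g$, with none of the non-canonical liftings that appeared in the $\CD$-module case, so pullback functoriality becomes a straightforward consequence of the structure of $M$ rather than something that has to be normalised. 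Pullback compatibilities along morphisms $T' \to T$ over $X$ are induced by the obvious strict common \'etale neighbourhood between $T' \times_S X \rightleftarrows T'$ and $T' \times_T (T \times_S X) \rightleftarrows T'$, whose two sections coincide strictly, and the structure isomorphism $\Psi'(M)(f)$ associated to a fibrewise \'etale morphism $f = (f_X, f_S): X/S \to X'/S'$ comes from the strict common \'etale neighbourhood represented by $(\id_T, f_X): T \times_S X \to T \times_{S'} X'$, which is commutative on the nose because $g$ is globally defined on $T$.

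I do not anticipate any further substantial obstacle. The natural isomorphisms $\theta' \circ \Psi' \simeq \id$ and $\Psi' \circ \theta' \simeq \id$ follow the template of Section \ref{subsec: compatibility of the functors}: one direction uses the identification $S \times_S X \simeq X$ carrying the identity section to $\sigma$, while the other uses the structure isomorphism $\SF(\pr_X, h): \SF_{T \times_S X / T} \EquivTo \pr_X^* \SF_{X/S}$ pulled back along the canonical section. The second equivalence in the theorem, $\univcat \simeq \QCoh{\pointedvarietiesc{\infty}}$, then follows immediately from the fact that quasi-coherent sheaves are insensitive to \'etale stackification.
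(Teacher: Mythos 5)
Your proposal is correct and follows essentially the same route as the paper, which defines the same two functors ($\SF \mapsto \sigma^*\SF_{X/S}$ and $M \mapsto M_{X \times_S X \rightleftarrows X : \Delta}$) and explicitly defers the details to the $\CD$-module proof, noting they are simpler. Your $T$-point description of $\Psi'(M)_{X/S}$ is just the unwound version of the paper's formula (take $T = X$, $g = \id_X$ to recover $M_{\pr_1 : X \times_S X \rightleftarrows X : \Delta}$), and your observations about strictness removing the non-canonical liftings are exactly why the paper calls this case simpler.
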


\begin{proof}
The idea behind the proof is similar to the case of universal $\CD$-modules: we proceed by defining functors in both directions and checking that they are quasi-inverse to each other. In brief, we have
\begin{align}\label{universal O module to sheaf}
\theta: \univcat &\to \QCoh{\prestackPVbc{\infty}}\\
\SF &\mapsto \left( (\pi:X \rightleftarrows S : \sigma) \mapsto \sigma^*(\SF_{X/S}) \right); \nonumber
\end{align}
\begin{align}\label{sheaf of universal O module}
\Psi: \QCoh{\prestackPVbc{\infty}} &\to \univcat\\
 M & \mapsto \left( (X \to S) \mapsto M_{\text{pr}_1: X \times_S X \rightleftarrows X: \Delta} \right). \nonumber
\end{align}
We refrain from spelling out the details---the definitions and arguments are along the same lines as those used in the proof of Theorem \ref{thm: universal D-modules are quasi-coherent sheaves}, but simpler. \qed
\end{proof}

\section{Convergent and ind-finite universal modules} 
\label{sec: convergent and ind-finite universal modules}
So far, we have identified the category of universal $\CD$-modules with the category of representations of the group-valued prestack $G^\et$. Furthermore, we have identified $\Rep(G)$ as a full subcategory of $\Rep(G^\et)$. In this section, we study the corresponding full subcategory of $\univcatD$. 

We have two approaches: in \ref{subsec: convergent universal modules} we take the first approach, via the description of $\Rep(G)$ as $\displaystyle\colim_{c \in \BN} \Rep(G^\cth)$, as in Proposition \ref{prop: representations of G are continuous}. The second method uses the characterisation of representations of $G$ as those representations of $G^\et$ which are locally finite when viewed as representations of $K^\et$, as in Corollary \ref{cor: restriction is an embedding for G}. We discuss this in \ref{subsec: ind-finite universal modules} and \ref{subsec: ind-finite universal D-modules}. Comparing the results obtained from each of these approaches allows us to provide two characterisations of those universal $\CD$-modules which lie in the essential image of $\Rep(G)$ under the equivalence $\Rep(G^\et) \EquivTo \univcatD$. We will call these the \emph{convergent} universal $\CD$-modules. 
\subsection{Convergent universal modules}
\label{subsec: convergent universal modules}
Recall the stack of formal germs introduced in Remark \ref{rmk: stack of analytic germs}:
\begin{align*}
\varieties \defeq \lim_{c \in \BN} \varietiesc{c}.
\end{align*}
Combining the results of Proposition \ref{prop: representations of G are continuous}, Corollary \ref{cor: restriction is an embedding for G}, and Theorem \ref{thm: universal D-modules are quasi-coherent sheaves}, we obtain the following diagram:
\begin{center}
\begin{small}
\begin{tikzpicture}[>=angle 90,bij/.style={above,sloped, inner sep=0.5pt},cross line/.style={preaction={draw=white, -, line width=6pt}}] 
\matrix(a)[matrix of math nodes, row sep=1em, column sep=.5em, text height=2.5ex, text depth=0.25ex]
{ & \Rep(G^\et) & & \QCoh{\varietiesc{\infty}} & \qquad & \univcatD \\
\Rep(G)                                       && \QCoh{\varieties}                                      & & & \\
                                              &&                                                        & & & \\
\displaystyle{\colim_{c \in \BN} \Rep(G^\cth)}&& \displaystyle{\colim_{c \in \BN} \QCoh{\varietiesc{c}}}& & & \\
                                              &&                                                        & & & \\
\Rep(G^\cth)                                  && \QCoh{\varietiesc{c}}                                  & & & \\};

\path[right hook-stealth]
(a-2-1) edge (a-1-2)
(a-2-3) edge (a-1-4)

(a-6-1) edge (a-4-1)
(a-6-3) edge (a-4-3)

(a-4-3) edge[bend right=15] (a-1-4)
(a-6-3) edge[bend right=25] (a-1-4);

\path[-stealth, bij, font=\scriptsize]
(a-4-1) edge node{$\sim$} (a-2-1)
(a-4-3) edge node{$\sim$} (a-2-3)

(a-1-2) edge node{$\sim$} (a-1-4)
(a-2-1) edge node{$\sim$} (a-2-3)
(a-4-1) edge node{$\sim$} (a-4-3)
(a-6-1) edge node{$\sim$} (a-6-3)

(a-1-4) edge node{$\sim$} node[below=2pt]{$\Psi$} (a-1-6);
\end{tikzpicture}
\end{small}
\end{center}

Since our goal is to identify $\Rep(G)$ with a category of universal $\CD$-modules, we must now study the essential image of $\QCoh{\varietiesc{c}}$ under $\Psi$ for each $c \in \BN$. 

Suppose that $M \in \QCoh{\varietiesc{c}} \emb \QCoh{\varietiesc{\infty}}$. Then we know that $M$ consists of the data of a sheaf
\begin{align*}
M_{X \rightleftarrows S} \in \QCoh{S}
\end{align*}
for each object $(X \rightleftarrows S) \in \prestackVbc{\infty}$, together with isomorphisms 
\begin{align*}
M(f, \alpha): f^* M_{X^\prime \rightleftarrows S^\prime} \EquivTo M_{X \rightleftarrows S} \in \QCoh{S},
\end{align*}
for any commutative diagram in $\PreStk$ of the form
\begin{center}
\begin{tikzpicture}[>=angle 90]
\matrix(e)[matrix of math nodes, row sep=1.5em, column sep=2em, text height=1.5ex, text depth=0.25ex]
{     &    &                     & & &       &  V_\alpha  &                      &         \\
  S_2 &    &\prestackVbc{\infty} & & &   X_1 &            & S_1 \times_{S_2} X_2 & X_2     \\
      & {} &                     & & &       &  S_1       &                      &         \\
  S_1 &    &   {}                & & &   S_1 &            & S_1                  & S_2.    \\};
\path[->, font=\scriptsize]
 (e-2-1) edge node[above]{$(\pi_2,\sigma_2)$} (e-2-3)
 (e-4-1) edge node[left]{$f$} (e-2-1) 
         edge node[below right]{$(\pi_1, \sigma_1)$} (e-2-3)
         
 (e-3-2) edge[-implies, double, double distance=2pt]node[right]{$\ \alpha$} (e-2-1)        

 (e-2-6) edge[bend left=10] node[right]{$\pi_1$} (e-4-6)
 (e-4-6) edge[bend left=10] node[left]{$\sigma_1$} (e-2-6)
 (e-1-7) edge[bend left=10] node[right]{$\rho_\alpha$} (e-3-7)
 (e-3-7) edge[densely dotted, bend left=10] node[left]{$\tau_\alpha$} (e-1-7)
 (e-2-8) edge[bend left=10] node[right]{$f^*\pi_2$}(e-4-8)
 (e-4-8) edge[bend left=10] node[left]{$f^*\sigma_2$}(e-2-8)
 (e-1-7) edge node[above left]{$\phi_{\alpha}$} (e-2-6)
         edge node[above right]{$\psi_{\alpha}$} (e-2-8)
 (e-2-9) edge[bend left=10] node[right]{$\pi_2$} (e-4-9)
 (e-4-9) edge[bend left=10] node[left]{$\sigma_2$} (e-2-9)
 
 (e-2-8) edge node[above]{$\pr_{X_2}$} (e-2-9)
 (e-4-8) edge node[below]{$f$} (e-4-9);
\path[-]
 (e-3-7) edge[double, double distance=2pt] (e-4-6)
         edge[double, double distance=2pt] (e-4-8);
\end{tikzpicture}
\end{center}

The fact that $M$ is an object of $\QCoh{\prestackVbc{c}}$ amounts to the condition that $M(f, \alpha) = M(f, \alpha^\prime)$ whenever any representatives of $\alpha$ and $\alpha^\prime$ are $(c)$-equivalent (c.f. Remark \ref{rmk: not well-defined w.r.t. c-equivalence}). Let us consider the implications of this condition for the corresponding universal $\CD$-module $\Psi(M)$. Suppose that we have two \'etale maps $f_1, f_2$ of $n$-dimensional families over some base scheme $S$
\begin{center}
\begin{tikzpicture}[>=angle 90]
\matrix(f)[matrix of math nodes, row sep=3em, column sep=3em, text height=1.5ex, text depth=0.25ex]
{X   & X^\prime   \\
 S & S, \\};
\path[-]
 (f-2-1) edge[double, double distance = 2pt] (f-2-2);
\path[->, font=\scriptsize]
 (f-1-1) edge node[above]{$f_i$} (f-1-2)
 
 (f-1-1) edge[bend left=10] (f-2-1)
 (f-2-1) edge[densely dotted, bend left=10] node[left]{$\sigma$} (f-1-1)
 
 (f-1-2) edge[bend left=10] (f-2-2)
 (f-2-2) edge[bend left=10] node[left]{$\sigma^\prime$} (f-1-2)
; 
\end{tikzpicture}
\end{center}
inducing the same isomorphism of the $c$th infinitesimal neighbourhoods of $S$ in $X$ and $X^\prime$:
\begin{align*}
f_1^\cth = f_2^\cth : X^\cth_S \EquivTo X^{\prime \cth}_S.
\end{align*}

Then we obtain isomorphisms
\begin{align*}
\Psi(M)(f_i): \Psi(M)(X/S) \EquivTo f_{i,X/S}^* \Psi(M)(X^\prime/S) \in \QCoh{(X/S)_\dR}, \ i=1,2,
\end{align*}
and pulling back along $\overline{\sigma}: S \to (X/S)_\dR$ yields maps
\begin{align*}
\overline{\sigma}^*\Psi(M)(f_i): \overline{\sigma}^*\Psi(M)(X/S) \EquivTo \overline{\sigma}^*f_{i,X/S}^*\Psi(M)(X^\prime/S) \in \QCoh{S}, \ i=1,2.
\end{align*}
Note that $f_{1,X/S} \circ \overline{\sigma} = \overline{\sigma^\prime} = f_{2,X/S} \circ \overline{\sigma}$, and so the maps $\overline{\sigma}^*\Psi(M)(f_i)$ are maps between the same sheaves on $S$. From the definition of $\Psi(M)$, we identify
\begin{align*}
\overline{\sigma}^*\Psi(M)(X/S) = M_{X \rightleftarrows S}; \qquad \overline{\sigma^\prime}^* \Psi(M)(X^\prime/S\prime) = M_{X^\prime \rightleftarrows S},
\end{align*}
and we see that for $i=1,2$ the map $\overline{\sigma}^*\Psi(M)(f_i)$ is given by the structure isomorphism $M(\id_S, \alpha_i)$ of M, where $\alpha_i$ is the isomorphism in $\prestackVbc{\infty}$ represented by the common \'etale neighbourhood
\begin{center}
\begin{tikzpicture}[>=angle 90]
\matrix(c)[matrix of math nodes, row sep=2em, column sep=2em, text height=1.5ex, text depth=0.25ex]
{   & X & \\
 X  &   & X^\prime \\
    & S. & \\};
\path[->, font=\scriptsize]
(c-1-2) edge node[above left]{$\id_X$} (c-2-1)
(c-1-2) edge node[above right]{$f_i$} (c-2-3)
(c-2-1) edge[bend left=10] (c-3-2)
(c-3-2) edge[bend left=15] (c-2-1)
(c-2-3) edge[bend right=10] (c-3-2)
(c-3-2) edge[bend right=15] (c-2-3)
(c-1-2) edge[bend left=10] (c-3-2);
\path[densely dotted, ->]
(c-3-2) edge[bend left=10] (c-1-2);
\end{tikzpicture}
\end{center}
Since these two common \'etale neighbourhoods are $(c)$-equivalent, it follows that $M(\id_S, \alpha_1) = M(\id_S, \alpha_2)$.

Motivated by this observation, we formulate the following definition:
\begin{defn}
A universal $\CD$-module $\SF$ is \emph{of $c$th order} if whenever we have two \'etale morphisms $f_1, f_2$ of $n$-dimensional families over $S$ such that 
\begin{align*}
f_1^\cth = f_2^\cth : X^\cth_S \EquivTo X^{\prime \cth}_S,
\end{align*}
 then we have that
\begin{align*}
\overline{\sigma}^*\SF(f_1) = \overline{\sigma}^*\SF(f_2): \overline{\sigma}^*\SF_{X/S} \EquivTo \overline{\sigma^\prime}^*\SF_{X^\prime/S}.
\end{align*}
\end{defn}
We let $\univcatDc{c}$ denote the full subcategory of $\univcatD$ whose objects are the universal $\CD$-modules of $c$th order.

\begin{prop}
The functor $\Psi: \QCoh{\prestackVbc{\infty}} \EquivTo \univcatD$ restricts to an equivalence
\begin{align*}
\Psi^\cth: \QCoh{\prestackVbc{c}} \EquivTo \univcatDc{c}.
\end{align*}
\end{prop}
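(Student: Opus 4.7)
The plan is to exploit the equivalence $\Psi\colon \QCoh{\prestackVbc{\infty}} \EquivTo \univcatD$ with quasi-inverse $\theta$ already established in Theorem \ref{thm: universal D-modules are quasi-coherent sheaves}. Since $\QCoh{\prestackVbc{c}}$ (resp.\ $\univcatDc{c}$) is a \emph{full} subcategory of $\QCoh{\prestackVbc{\infty}}$ (resp.\ $\univcatD$), it suffices to check that $\Psi$ sends $\QCoh{\prestackVbc{c}}$ into $\univcatDc{c}$ and that $\theta$ sends $\univcatDc{c}$ into $\QCoh{\prestackVbc{c}}$.

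The forward direction is essentially the computation performed in the paragraphs immediately preceding the definition of $c$th-order universal $\CD$-module. Given $M\in\QCoh{\prestackVbc{c}}$ and two \'etale morphisms $f_1,f_2\colon X/S\to X^\prime/S$ with $f_1^\cth = f_2^\cth$, I would identify $\overline{\sigma}^*\Psi(M)(f_i)$ with the structure isomorphism $M(\id_S,\alpha_i)$ attached to the common \'etale neighbourhood $(X,\id_X,f_i)$. Since the two such neighbourhoods are $(c)$-equivalent by hypothesis and $M$ belongs to $\QCoh{\prestackVbc{c}}$, these structure maps coincide, establishing that $\Psi(M)$ lies in $\univcatDc{c}$.

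The harder direction is to show that $\theta(\SF)\in\QCoh{\prestackVbc{c}}$ when $\SF\in\univcatDc{c}$, i.e.\ that the isomorphism $\theta(\SF)(f,\alpha)$ depends only on the $(c)$-equivalence class of $\alpha$. Given two $(c)$-equivalent representatives $(V_i,\phi_i,\psi_i)$ ($i=1,2$) of morphisms from $(X_1\rightleftarrows S_1)$ to $f^*(X_2\rightleftarrows S_2)$, I would pass to the common refinement $W \defeq V_1\times_{X_1}V_2$ (the pullback along the maps $\phi_i$ to $X_1$) with projections $p_i\colon W\to V_i$. The map $W\to S_1$ is smooth of relative dimension $n$, and a section $\tau_W\colon S_1\to W$ is produced by combining $\tau_1,\tau_2$ on $S_{1,\red}$ (where they are compatible under the $p_i$) and lifting via formal smoothness. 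Then $\psi_1\circ p_1$ and $\psi_2\circ p_2$ are two \'etale morphisms of pointed $n$-dimensional $S_1$-families $W/S_1\to (S_1\times_{S_2}X_2)/S_1$, and their agreement on the $c$th infinitesimal neighbourhood is precisely the $(c)$-equivalence hypothesis. Applying the definition of $\univcatDc{c}$ to this pair yields the equality $\overline{\tau_W}^*\SF(\psi_1\circ p_1)=\overline{\tau_W}^*\SF(\psi_2\circ p_2)$, and combining with the compatibility of $\SF$ with composition along $p_i$ allows us to rewrite each $\theta(\SF)(f,\alpha_i)$ through the refinement $W$ and conclude their equality.

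The main obstacle is the bookkeeping in this last step. In particular, one must justify that replacing the $\overline{\tau_i}$-pullback by the $\overline{\tau_W}$-pullback (via $p_i$) is legitimate: this uses the fact that a section's map into a relative de Rham prestack depends only on its restriction to $S_{1,\red}$, together with $p_i\circ\tau_W\circ\redEmb{S_1}=\tau_i\circ\redEmb{S_1}$. One must also check that after this rewriting, the two structure maps are literally of the form to which the $\cth$-order condition applies. Once this reduction is made, the hypothesis $\SF\in\univcatDc{c}$ applies verbatim, and the equivalence of full subcategories follows formally from the ambient equivalence $\Psi$.
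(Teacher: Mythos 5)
Your proposal is correct and takes essentially the same approach as the paper: the easy direction is exactly the discussion preceding the definition of a $c$th-order universal $\CD$-module, and for the hard direction the paper likewise reduces to a common refinement of the two representatives and then invokes the $c$th-order condition. The only (cosmetic) difference is that the paper forms the fibre product over the target $S_1\times_{S_2}X_2$ via the maps $\psi_i$ (i.e.\ composes $\alpha_1$ with $\alpha_2^{-1}$ and checks the composite is $(c)$-equivalent to the identity), whereas you refine over the source $X_1$ via the $\phi_i$ and compare the two maps to the target; the two computations are mirror images of one another.
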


\begin{proof}
By the above discussion, the restriction $\Psi^\cth$ of $\Psi$ to $\QCoh{\prestackVbc{c}}$ is a fully faithful embedding into $\univcatDc{c}$. To complete the proof, it suffices to show that the functor $\theta: \univcatD \EquivTo \QCoh{\prestackVbc{\infty}}$ restricts to a functor
\begin{align*}
\theta^\cth: \univcatDc{c} \to \QCoh{\prestackVbc{c}}.
\end{align*}
So let us assume that $\SF \in \univcatDc{c}$, and consider the quasi-coherent sheaf $\theta(\SF) \in \QCoh{\prestackVbc{\infty}}$. Suppose that we have a diagram in $\PreStk$ of the form:
\begin{center}
\begin{tikzpicture}[>=angle 90]
\matrix(i)[matrix of math nodes, row sep=3em, column sep=3em, text height=1.5ex, text depth=0.25ex]
{ S^\prime & \prestackVbc{\infty}. \\
 S & \\};
\path[->, font=\scriptsize]
 (i-1-1) edge node[above]{$(\pi^\prime, \sigma^\prime)$} (i-1-2)
 (i-2-1) edge node[left]{$f$} (i-1-1)
         edge node[below, sloped]{$(\pi, \sigma)$} (i-1-2);
\end{tikzpicture}
\end{center} 
Assume in addition that we have two isomorphisms 
\begin{align*}
\alpha_i \in \Hom_{\prestackVbc{\infty}(S)}\left( (\pi, \sigma), (\pi^\prime, \sigma^\prime) \circ f \right)
\end{align*}
which make this diagram commute, and which are $(c)$-equivalent, although not necessarily $(\infty)$-equivalent. In order to show that $\theta(\SF)$ is a quasi-coherent sheaf on $\prestackVbc{c}$, we need to show that the structure isomorphisms $\theta(\SF)(f, \alpha_i)$ agree with each other. 

Let us choose representatives of the isomorphisms $\alpha_i$ as follows:
 \begin{center}
\begin{tikzpicture}[>=angle 90]
\matrix(e)[matrix of math nodes, row sep=1.5em, column sep=3em, text height=1.5ex, text depth=0.25ex]
{   &  V_i  &                              & \\
 X  &       & S \times_{S^\prime} X^\prime & X^\prime \\
    &  S    &                              & \\
 S  &       & S                            & S^\prime. \\};
\path[->, font=\scriptsize]
 (e-2-1) edge[bend left=10] (e-4-1)
 (e-4-1) edge[bend left=10] (e-2-1)
 (e-1-2) edge[bend left=10] node[right]{$\rho_i$} (e-3-2)
 (e-2-3) edge[bend left=10] (e-4-3)
 (e-4-3) edge[bend left=10] (e-2-3)
 (e-2-4) edge[bend left=10] (e-4-4)
 (e-4-4) edge[bend left=10] (e-2-4)
 (e-1-2) edge node[above left]{$\phi_i$} (e-2-1)
         edge node[above right]{$\psi_i$} (e-2-3)
 (e-2-3) edge (e-2-4)
 (e-4-3) edge node[below]{$f$} (e-4-4);
\path[-]
 (e-3-2) edge[double, double distance=2pt] (e-4-1)
         edge[double, double distance=2pt] (e-4-3);
\path[densely dotted, ->, font=\scriptsize]
 (e-3-2) edge[bend left=10] node[left]{$\tau_i$} (e-1-2);
\end{tikzpicture}
\end{center}

Then it suffices to show that 
\begin{align*}
\overline{\tau_1}^* \left(\SF(\phi_1, \id_S) \circ \SF(\psi_1, \id_S)^{-1} \right) = \overline{\tau_2}^* \left(\SF(\phi_2, \id_S) \circ \SF(\psi_2, \id_S)^{-1} \right).
\end{align*}

Equivalently, we can compose the representatives of $\alpha_1$ and $\alpha_2^{-1}$ to obtain a common \'etale neighbourhood which we'll call $\alpha_{12}$:
\begin{center}
\begin{tikzpicture}
[>=angle 90, 
cross line/.style={preaction={draw=white, -, line width=4pt}}]
\matrix(g)[matrix of math nodes, row sep=1.5em, column sep=3em, text height=1.5ex, text depth=0.25ex]
{     &     &                             & V_1 \times_{S \times_{S^\prime} X^\prime} V_2 &     &     \\
      & V_1 &                             &                                               & V_2 &     \\
 X_1  &     & S\times_{S^\prime} X^\prime & S                                             &     & X_1 \\
      & S   &                             &                                               & S   &     \\
 S    &     & S                           &                                               &     & S.   \\};
\path[-]         
 (g-4-2) edge[ double, double distance= 2pt] (g-5-1)
         edge[ double, double distance= 2pt] (g-5-3)
 (g-3-4) edge[ double, double distance= 2pt] (g-4-2)
         edge[ double, double distance= 2pt] (g-4-5)
 (g-4-5) edge[ double, double distance= 2pt] (g-5-3)
         edge[ double, double distance= 2pt] (g-5-6);
\path[densely dotted, ->, font=\scriptsize]
 (g-4-2) edge[bend left=10] (g-2-2)  
 (g-4-5) edge[bend left=10] (g-2-5)
 (g-3-4) edge[bend left=10] node[left]{$\tau_{12}$} (g-1-4);
\path[->, font=\scriptsize]
 (g-3-1) edge[bend left=10] (g-5-1)
 (g-5-1) edge[bend left=10] (g-3-1)
 (g-2-2) edge[bend left=10] (g-4-2)
 
 (g-5-3) edge[draw=white, -, line width=4pt, bend left=10] (g-3-3)
 (g-3-3) edge[cross line, bend left=10] (g-5-3)
 (g-5-3) edge[bend left=10] (g-3-3)
 (g-1-4) edge[bend left=10] (g-3-4)

 (g-2-5) edge[bend left=10] (g-4-5)

 (g-3-6) edge[bend left=10] (g-5-6)
 (g-5-6) edge[bend left=10] (g-3-6)

 (g-2-2) edge node[above left]{$\phi_1$}(g-3-1)
         edge node[above right]{$\psi_1$}(g-3-3)
 (g-1-4) edge node[above left]{$\pr_{V_1}$} (g-2-2)
         edge node[above right]{$\pr_{V_2}$} (g-2-5)
 (g-2-5) edge[cross line] node[left=12pt, inner sep=05.ex]{$\psi_2$}(g-3-3)
         edge node[above right]{$\phi_2$} (g-3-6);
\end{tikzpicture}
\end{center}
Now we need to show that 
\begin{align*}
\overline{\tau_{12}}^* \left(\SF(\phi_1 \circ \pr_{V_1}, \id_S) \circ \SF(\phi_2 \circ \pr_{V_2}, \id_S)^{-1} \right) = \id_{\overline{\tau}^* \SF(X/S)}.
\end{align*}
We can use the fact that $\SF$ is of $c$th order: it suffices to show that $\alpha_{12}$ is $(c)$-equivalent to the identity. But of course
\begin{align*}
\left(\phi_1 \circ \pr_{V_1} \right)^\cth \circ \left(\left(\phi_2 \circ \pr_{V_2} \right)^\cth\right)^{-1} &= \phi_1^\cth \circ \pr_{V_1}^\cth \circ \left(\pr_{V_2}^\cth \right)^{-1} \circ \left(\phi_2^\cth \right)^{-1} \\
 &= \phi_1^\cth \circ \left(\psi_1^\cth\right)^{-1} \circ \psi_2^\cth \circ \left(\phi_2^\cth\right)^{-1} \\
 &= \id_{X^\cth_S}.\\
\end{align*}
So $\theta(\SF)$ is indeed an object of the subcategory $\QCoh{\prestackVbc{c}}$, and the proof is complete. \qed
\end{proof}

The following is immediate:
\begin{corollary}
We have an equivalence of categories
\begin{align*}
\univcatDc{c} \simeq \Rep(G^\cth).
\end{align*}
\end{corollary}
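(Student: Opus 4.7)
The plan is to obtain the equivalence by stringing together four equivalences already established (or all but immediate) in the paper; no new analytic work is needed.

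First I would invoke the proposition just proved to identify $\univcatDc{c} \simeq \QCoh{\prestackVbc{c}}$ via the restriction $\Psi^\cth$ of $\Psi$. Next, since the paper explains in the introduction and in Section~\ref{subsec: stacks of etale germs} that $\prestackVbc{c}$ has $\varietiesc{c}$ as its \'etale stackification, and since $\QCoh{\bullet}$ is preserved under stackification, we get $\QCoh{\prestackVbc{c}} \simeq \QCoh{\varietiesc{c}}$. Then I would apply Theorem~\ref{thm: isomorphisms for order c stacks}, which provides the isomorphism of stacks $\varietiesc{c} \EquivTo BG^\cth$, to deduce $\QCoh{\varietiesc{c}} \simeq \QCoh{BG^\cth}$. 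Finally, I would use the standard identification $\QCoh{BG^\cth} \simeq \Rep(G^\cth)$, which was recorded in Section~\ref{subsec: representations and classifying stacks} (and which holds for any group-valued prestack $H$, giving $\QCoh{BH_\triv} \simeq \QCoh{BH} \simeq \Rep(H)$).

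Composing these four equivalences yields $\univcatDc{c} \simeq \Rep(G^\cth)$. There is no real obstacle here: the corollary is genuinely a formal consequence, since the hard content was already distributed across (i)~the preceding proposition, which required the careful analysis of when the structure isomorphisms $M(f,\alpha)$ are well-defined modulo $(c)$-equivalence, and (ii)~Theorem~\ref{thm: isomorphisms for order c stacks}, whose proof rested on the relative Artin approximation theorem of Section~\ref{sec: relative artin approximation}. The only mild check worth spelling out is naturality: that the composite equivalence agrees, up to natural isomorphism, with the composition in column (B) of the main diagram obtained by pulling back along $F^\cth$, which amounts to unwinding the definitions of $\Psi^\cth$ and of $F^\cth$ on $\prestackVbc{c}$.
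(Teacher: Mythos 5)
Your proposal is correct and follows exactly the route the paper intends: the paper labels this corollary "immediate" precisely because it is the composite of the preceding proposition ($\Psi^\cth$), the stackification-invariance of $\QCoh{\bullet}$, Theorem \ref{thm: isomorphisms for order c stacks}, and the identification $\QCoh{BG^\cth}\simeq\Rep(G^\cth)$. Nothing further is needed.
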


We have the following nested sequence of subcategories of $\univcatD$:
\begin{align*}
\ldots \emb \univcatDc{c} \emb \univcatDc{c+1} \emb \ldots \emb \univcatD.
\end{align*}

\begin{defn}
Let 
\begin{align*}
\univcatDconv \defeq \colim_{c \in \BN} \univcatDc{c}.
\end{align*}
It is a full subcatgory of $\univcatD$. An object of $\univcatDconv$ will be called a \emph{convergent} universal $\CD$-module of dimension $n$. 
\end{defn}

\begin{corollary}
The essential image of $\displaystyle\colim_{c \in \BN} \QCoh{\varietiesc{c}}$ in $\univcatD$ is $\univcatDconv$. We have an equivalence of categories
\begin{align*}
\Rep(G) \EquivTo \univcatDconv.
\end{align*}
\end{corollary}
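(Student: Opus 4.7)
The plan is to assemble this corollary from the equivalences that have already been established in the preceding sections, so essentially no new argument is needed beyond carefully chasing the definitions and checking that the relevant colimits behave well.

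First, I would invoke Proposition \ref{prop: representations of G are continuous}, which gives $\Rep(G) \simeq \colim_{c \in \BN} \Rep(G^\cth)$, together with Theorem \ref{thm: isomorphisms for order c stacks}, which yields an equivalence $\varietiesc{c} \EquivTo BG^\cth$ and hence $\QCoh{\varietiesc{c}} \simeq \Rep(G^\cth)$ for each finite $c$. Taking the colimit over $c \in \BN$ in the $\infty$-category of cocomplete categories, these equivalences assemble into
\begin{align*}
\Rep(G) \;\simeq\; \colim_{c \in \BN} \Rep(G^\cth) \;\simeq\; \colim_{c \in \BN} \QCoh{\varietiesc{c}}.
\end{align*}

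Next, I would apply the preceding proposition, which gives that $\Psi$ restricts to equivalences $\Psi^\cth : \QCoh{\varietiesc{c}} \EquivTo \univcatDc{c}$. These equivalences are compatible with the inclusions $\QCoh{\varietiesc{c}} \hookrightarrow \QCoh{\varietiesc{c+1}}$ and $\univcatDc{c} \hookrightarrow \univcatDc{c+1}$ (both of which are fully faithful, since they sit inside $\QCoh{\varietiesc{\infty}}$ and $\univcatD$ respectively), so passing to the colimit yields an equivalence
\begin{align*}
\colim_{c \in \BN} \QCoh{\varietiesc{c}} \;\EquivTo\; \colim_{c \in \BN} \univcatDc{c} \;=\; \univcatDconv,
\end{align*}
where the last equality is just the definition of $\univcatDconv$. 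Composing with the previous chain produces the asserted equivalence $\Rep(G) \EquivTo \univcatDconv$.

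For the essential image claim, I would note that both $\QCoh{\varietiesc{c}}$ and $\univcatDc{c}$ are, by construction, full subcategories of $\QCoh{\varietiesc{\infty}}$ and $\univcatD$ respectively, with all transition functors fully faithful embeddings. Therefore the colimit $\colim_{c} \QCoh{\varietiesc{c}}$ is realised inside $\QCoh{\varietiesc{\infty}}$ as the union of the subcategories $\QCoh{\varietiesc{c}}$, and $\Psi$ sends this union to the corresponding union $\bigcup_c \univcatDc{c} = \univcatDconv$ inside $\univcatD$. This identifies the essential image exactly as claimed.

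There is no real obstacle here beyond bookkeeping; the one point that is worth stating carefully is the compatibility of the equivalences $\Psi^\cth$ with the filtration by $c$, which is immediate from the fact that $\Psi^\cth$ is simply the restriction of the single functor $\Psi$ on $\QCoh{\prestackVbc{\infty}}$. Everything else is a direct consequence of results already in hand.
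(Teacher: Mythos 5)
Your proposal is correct and follows essentially the same route as the paper: the corollary is obtained there, too, by combining $\Rep(G) \simeq \colim_c \Rep(G^\cth)$, the equivalences $\varietiesc{c} \simeq BG^\cth$, and the restriction $\Psi^\cth: \QCoh{\varietiesc{c}} \EquivTo \univcatDc{c}$ of $\Psi$, then passing to the colimit of these full subcategories inside $\QCoh{\varietiesc{\infty}}$ and $\univcatD$. Nothing beyond this bookkeeping is needed.
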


We can similarly define the category $\univcatc{c}$ of $c$th-order universal $\CO$-modules and can show that 
\begin{align*}
\univcatc{c} \simeq \QCoh{\pointedvarietiesc{c}}
\end{align*}
for any $c \in \BN$. Letting $\univcatOconv \defeq \displaystyle\colim_{c \in \BN} \univcatc{c}$, we obtain the following:

\begin{prop}
We have an equivalence of categories
\begin{align*}
\Rep(K) \EquivTo \univcatOconv.
\end{align*}
\end{prop}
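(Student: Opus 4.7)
The plan is to mimic, in the $\CO$-module setting, the chain of equivalences already established for $\CD$-modules, using the strict analogues introduced throughout the paper. First I would define, for each $c \in \BN$, the notion of a $c$th-order universal $\CO$-module: $\SF \in \univcat$ lies in $\univcatc{c}$ if for any two fibrewise \'etale morphisms $f_1, f_2 \colon (X/S) \to (X'/S)$ of pointed $n$-dimensional families preserving the distinguished sections $\sigma$ and $\sigma'$, such that $f_1^{(c)} = f_2^{(c)}\colon X^{(c)}_S \EquivTo X'^{(c)}_S$, one has $\sigma^*\SF(f_1) = \sigma^*\SF(f_2)$. This gives the chain of full subcategories $\univcatc{c} \subset \univcatc{c+1} \subset \univcat$, and by definition $\univcatOconv := \colim_{c \in \BN}\univcatc{c}$.

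Next I would show that the equivalence $\Psi\colon \QCoh{\prestackPVbc{\infty}} \EquivTo \univcat$ provided by Theorem~\ref{thm: universal O modules are sheaves} restricts, for each $c$, to an equivalence
\[
\Psi^{(c)}\colon \QCoh{\prestackPVbc{c}} \EquivTo \univcatc{c}.
\]
The argument is the exact strict analogue of the one carried out in Section~\ref{sec: convergent and ind-finite universal modules} for the $\CD$-module case: on one side, for $M \in \QCoh{\prestackPVbc{c}}$, comparing $\sigma^*\Psi(M)(f_i)$ for $i=1,2$ recovers the structure isomorphism $M(\id_S,\alpha_i)$ attached to the strict common \'etale neighbourhood represented by $(X,\id_X,f_i)$, and these agree by $(c)$-equivalence. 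Conversely, if $\SF$ is of $c$th order, then for any two strict common \'etale neighbourhoods $(V_1,\phi_1,\psi_1)$, $(V_2,\phi_2,\psi_2)$ that are $(c)$-equivalent, composing the first with the mirror of the second produces a strict common \'etale neighbourhood whose two legs agree on the $c$th infinitesimal neighbourhood; the assumption on $\SF$ forces the induced structure isomorphism of $\theta(\SF)$ to be the identity, so $\theta(\SF) \in \QCoh{\prestackPVbc{c}}$. Note the proof is actually simpler than in the $\CD$-module case, since we never need to lift sections along formally smooth morphisms.

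Combining this with Theorem~\ref{thm: isomorphisms for order c stacks} (which gives $\pointedvarietiesc{c} \simeq BK^{(c)}$) and the equivalence $\QCoh{BK^{(c)}} \simeq \Rep(K^{(c)})$, I obtain for each finite $c$ an equivalence $\univcatc{c} \simeq \Rep(K^{(c)})$, compatible with the transition functors. Passing to colimits in the $\infty$-category of cocomplete categories yields
\[
\univcatOconv = \colim_{c \in \BN}\univcatc{c} \simeq \colim_{c \in \BN}\Rep(K^{(c)}).
\]
Finally I would invoke Proposition~\ref{prop: representations of pro-algebraic groups are continuous}, which applies since $K = \lim_{c} K^{(c)}$ is a pro-algebraic group, to identify the right-hand side with $\Rep(K)$, yielding the claimed equivalence $\Rep(K) \EquivTo \univcatOconv$.

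The main obstacle, as in the $\CD$-module story, is the careful bookkeeping needed to show that $\Psi$ restricts to $\Psi^{(c)}$, and in particular that the $(c)$-equivalence relation on strict common \'etale neighbourhoods matches exactly the $c$th-order compatibility imposed on the universal $\CO$-module. Once that translation is pinned down, the rest of the argument is formal.
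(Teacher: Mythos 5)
Your proposal is correct and follows essentially the same route as the paper: the paper simply states that one defines $\univcatc{c}$, proves $\univcatc{c} \simeq \QCoh{\pointedvarietiesc{c}}$ as the strict (and simpler) analogue of the $\CD$-module argument of Section \ref{sec: convergent and ind-finite universal modules}, and then combines Theorem \ref{thm: isomorphisms for order c stacks} with $\Rep(K) \simeq \colim_{c}\Rep(K^\cth)$ from Proposition \ref{prop: representations of pro-algebraic groups are continuous}. Your write-up just makes explicit the details the paper leaves implicit.
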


\subsection{Universal \texorpdfstring{$\CO$}{O}-modules of ind-finite type}
\label{subsec: ind-finite universal modules}
In this subsection, we take a different approach, beginning with the identification
\begin{align*}
\Rep(G) \EquivTo \Rep^{K^\et{\text{-}\lf}}(G^\et).
\end{align*}
As has generally been the case when working with the groups $G^\et$ and $K^\et$, we will begin by studying the picture for $K^\et$, and then extend our results to $G^\et$.

Our first step is to identify the subcategory of $\QCoh{\pointedvarietiesc{\infty}}$ corresponding to the locally finite representations of $K$. First notice that a representation $V$ of $K^\et$ is finite-dimensional if and only if the corresponding sheaf $M$ on $\QCoh{\BK^\et}$ and hence on $\QCoh{\pointedvarietiesc{\infty}}$ is of \emph{finite type}: that is, for every $S=\Spec{R} \to \BK^\et \simeq \pointedvarietiesc{\infty}$, the corresponding sheaf $M_S \in \QCoh{S}$ is of finite type.

\begin{rmk}
This is equivalent to requiring the sheaf $M_S$ to be of finite presentation, and in fact to be locally free. This follows from three facts: these properties are all equivalent for $\QCoh{\pt} \simeq \Vect$; they are preserved by pullback;  and every morphism $S \to \prestackPVc{\infty}$ factors through $S \to \pt$.

However, it is not equivalent to requiring the sheaf $M_S$ to be coherent, because coherence is not preserved under arbitrary pullbacks. For example, if $V=k$ is the trivial representation, with $M$ the corresponding sheaf on $S$, then $M_S = R$ for every $S = \Spec{R}$. If $R$ is a $k$-algebra which is not coherent as an $R$-module, then $M_S$ is finitely generated, but it is not coherent.
\end{rmk}

It follows that the essential image of $\Rep^\lf(K^\et)$ in $\QCoh{\pointedvarietiesc{\infty}}$ is the full subcategory generated by those sheaves $M \in \QCoh{\pointedvarietiesc{\infty}}$ which can be written as a union $M = \bigcup_{i} M_i$ of sheaves $M_i$ of finite type. 

\begin{defn}
Let $\CY$ be a prestack, and $M \in \QCoh{\CY}$ be a sheaf that can be written as the colimit of its subsheaves of finite type. Then we say that $M$ is of \emph{ind-finite type}. We denote the full subcategory of sheaves of ind-finite type by $\QCohIF{\CY}$. 
\end{defn} 

Of course, for any sheaf $M \in \QCoh{\CY}$ and for any $S = \Spec{R}$, we can always write $M_S$ as a union of finitely generated subsheaves; however, this cannot always be done in a way compatibly with all pullbacks and automorphisms of $S$-points of $\QCoh{\CY}$. 
\begin{eg}
Recall the notation of Section \ref{subsec: non-locally finite representations}. Let $\CY = B \A{\infty}$, and let $M \in \QCoh{\CY}$  be the sheaf corresponding to the regular representation $V$. Then $M$ is not an object of $\QCohIF{\CY}$. 
\end{eg}

We do not know if $\QCohIF{\pointedvarietiesc{\infty}}$ is equal to $\QCoh{\pointedvarietiesc{\infty}}$, or if it is a proper subcategory. By construction, this question is equivalent to the question of whether $\Rep^{\text{l.f.}}(K^\et)$ is a proper subcategory of $\Rep(K^\et)$.  

Now we can study the essential image of $\QCohIF{\pointedvarietiesc{\infty}}$ in $\univcat$. It is the full subcategory whose objects are those universal $\CO$-modules which can be written as a union of their submodules of finite type:
\begin{align*}
\SF = \bigcup_i \SF_i, 
\end{align*}
where $\SF_i \in \univcat$ is such that for any $X/S$ smooth of dimension $n$, $\SF_{i, X/S} \in \QCoh{X}$ is of finite type. This is equivalent to requiring $\SF_{i, X/S}$ to be locally free of finite rank: this is because the translation-invariance of $\SF$ ensures that $\SF_{i, X/S}$ is of constant rank. 

\begin{defn}
If $\SF$ is a universal $\CO$-module satisfying this condition, then we shall say that $\SF$ is a universal $\CO$-module of \emph{ind-finite type}. We denote the subcategory of universal $\CO$-modules of ind-finite type by $\univcatOif$.
\end{defn}

The following result is clear by definition:
\begin{prop}
The equivalence $\Rep(K^\et) \EquivTo \univcat$ restricts to give an equivalence of categories
\begin{align*}
\Rep(K) \EquivTo \univcatOif.
\end{align*}
\end{prop}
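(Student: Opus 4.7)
The plan is to chain together three correspondences that have already been established in the paper. Composing the equivalence of Theorem \ref{thm: universal O modules are sheaves} with the identification $\pointedvarietiesc{\infty} \simeq BK^\et$ from Remark \ref{rmk: stack of analytic germs} gives
\[
\Rep(K^\et) \;\simeq\; \QCoh{\pointedvarietiesc{\infty}} \;\simeq\; \univcat.
\]
By Corollary \ref{cor: restriction is an embedding for K}, the subcategory $\Rep(K) \subset \Rep(K^\et)$ is exactly $\Rep^\lf(K^\et)$, and in the immediately preceding discussion this has been identified with the subcategory $\QCohIF{\pointedvarietiesc{\infty}}$. So it suffices to check that the second equivalence, given by $\Psi$ and $\theta$, restricts to an equivalence $\QCohIF{\pointedvarietiesc{\infty}} \simeq \univcatOif$.

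First I would verify the correspondence on ``generators'', that is, on finite-type objects. If $M \in \QCoh{\pointedvarietiesc{\infty}}$ is of finite type (equivalently, corresponds to a finite-dimensional representation $V$ of $K^\et$), then by formula \eqref{sheaf of universal O module} the universal $\CO$-module $\Psi(M)$ has $\Psi(M)_{X/S} = M_{\pr_1: X \times_S X \rightleftarrows X: \Delta}$; this is locally free of rank $\dim V$, since any two closed points of $X$ give pointed $n$-dimensional families that are locally isomorphic in $\prestackPVbc{\infty}(X)$. Conversely, if $\SF$ is a universal $\CO$-module with each $\SF_{X/S}$ locally free of (necessarily constant) finite rank, then $\theta(\SF)_{X \rightleftarrows S} = \sigma^*(\SF_{X/S})$ is of finite type on $S$, so $\theta(\SF) \in \QCohIF{\pointedvarietiesc{\infty}}$.

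Next I would pass to filtered unions. The key point is that both $\Psi$ and $\theta$ preserve monomorphisms and commute with filtered colimits of sub-objects, since on each component they are given by pullback along flat maps ($\Delta$ and $\sigma$ respectively). Hence a decomposition $M = \bigcup_i M_i$ with each $M_i$ of finite type corresponds under $\Psi$ to a decomposition $\Psi(M) = \bigcup_i \Psi(M_i)$ exhibiting $\Psi(M)$ as ind-finite, and symmetrically for $\theta$. The main obstacle to watch for is the rank-constancy claim in the first step: one must rule out the pathology in which $\Psi(M)_{X/S}$ could be of non-constant rank across $X$. This is precisely where universality is used, and it is what ensures that $\univcatOif$, defined using local freeness of finite rank, really matches $\Rep(K)$ rather than merely containing its image.
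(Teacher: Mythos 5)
Your argument follows essentially the same route as the paper: there the proposition is recorded as immediate from the preceding discussion, which identifies finite-dimensional representations with sheaves of finite type (equivalently, locally free of finite constant rank, constancy coming from translation-invariance/universality) and hence $\Rep^\lf(K^\et) \simeq \QCohIF{\pointedvarietiesc{\infty}} \simeq \univcatOif$, exactly as you argue. One small correction: $\Delta$ and $\sigma$ are sections of smooth morphisms, hence closed immersions and not flat, so your justification of the passage to filtered unions via ``pullback along flat maps'' is not right as stated; however that step is immediate anyway, since $\Psi$ and $\theta$ are already known to be mutually quasi-inverse equivalences and therefore preserve monomorphisms, subobject lattices, and filtered unions.
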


\subsection{Universal \texorpdfstring{$\CD$}{D}-modules of ind-finite type}
\label{subsec: ind-finite universal D-modules}
Similarly, it is clear that the essential image in $\QCoh{\varietiesc{\infty}}$ of $\Rep{G}$ is the subcategory of sheaves $M \in \QCoh{\varietiesc{\infty}}$ such that the pullback of $M$ along the map
\begin{align*}
\pointedvarietiesc{\infty} \to \varietiesc{\infty}
\end{align*}
is of ind-finite type. We denote this category by
\begin{align*}
\QCoh{\varietiesc{\infty}}_{\pointedvarietiesc{\infty}\text{-ift}} \emb \QCoh{\varietiesc{\infty}}.
\end{align*}
Again, we do not know whether this is in fact a proper subcategory.

We can again characterise the image of this subcategory in $\univcatD$:

\begin{defn}
A universal $\CD$-module $\SF$ is of \emph{ind-finite type} if it is of ind-finite type when regarded as a universal $\CO$-module. We denote the full subcategory of universal $\CD$-modules of ind-finite type by $\univcatDif$. 
\end{defn}

\begin{rmk}
Let us emphasise that the decomposition of $\SF$ into subsheaves of finite type only needs to respect the $\CO$-module structures; we do not expect the subsheaves $\SF_i \subset \SF$ to be sub-$\CD$-modules of $\SF$. 
\end{rmk}

\begin{prop}
The equivalence $\Rep(G^\et) \EquivTo \univcatD$ restricts to an equivalence of subcategories
\begin{align*}
\Rep(G) \EquivTo \univcatDif.
\end{align*}
\end{prop}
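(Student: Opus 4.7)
The plan is to reduce this statement to the analogous $\CO$-module equivalence $\Rep(K) \EquivTo \univcatOif$ just established, combined with the identification $\Rep(G) \EquivTo \Rep^{K^\et\text{-}\lf}(G^\et)$ from Corollary \ref{cor: restriction is an embedding for G}. The key intermediate step is to verify the commutativity of the square
\begin{center}
\begin{tikzpicture}[>=angle 90,bij/.style={above,sloped,inner sep=0.5pt}]
\matrix(m)[matrix of math nodes, row sep=2.5em, column sep=3em, text height=1.5ex, text depth=0.25ex]
{\Rep(G^\et) & \univcatD \\
\Rep(K^\et) & \univcat, \\};
\path[->, font=\scriptsize]
(m-1-1) edge node[bij]{$\sim$} (m-1-2)
        edge node[left]{$\Res_{G^\et,K^\et}$} (m-2-1)
(m-2-1) edge node[bij]{$\sim$} (m-2-2)
(m-1-2) edge node[right]{$\mathrm{forget}$} (m-2-2);
\end{tikzpicture}
\end{center}
where the horizontal arrows come from Theorems \ref{thm: universal D-modules are quasi-coherent sheaves} and \ref{thm: universal O modules are sheaves} combined with the identifications $\varietiesc{\infty} \simeq BG^\et$ and $\pointedvarietiesc{\infty} \simeq BK^\et$ (Remark \ref{rmk: stack of analytic germs}), and $\mathrm{forget}$ is the functor obtained fibrewise by pullback along $\fdR{X}: X \to X_\dR$. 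Commutativity should follow by unwinding the construction of $\Psi$ in both settings: both vertical functors arise as pullback along the natural map of stacks $\pointedvarietiesc{\infty} \to \varietiesc{\infty}$, which on classifying stacks is induced by the inclusion $K^\et \emb G^\et$.

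Granted this square, the proof becomes essentially formal. For $V \in \Rep(G^\et)$ with corresponding $\SF \in \univcatD$,
\begin{align*}
V \in \Rep(G) &\iff \Res_{G^\et,K^\et}(V) \in \Rep^\lf(K^\et) \simeq \Rep(K) \\
              &\iff \mathrm{forget}(\SF) \in \univcatOif \iff \SF \in \univcatDif,
\end{align*}
where the first $\iff$ is Corollary \ref{cor: restriction is an embedding for G}, the second uses the commutative square together with the $\CO$-module proposition just proved, and the third is the definition of $\univcatDif$. Full faithfulness of the restricted functor is automatic, as it is the restriction of an equivalence to full subcategories which correspond to one another under the equivalence.

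The main obstacle is verifying the commutative square. While morally clear, it requires a careful comparison between the construction of $\Psi$ for $\CD$-modules (which lifts a section $\sigma^\circ : T_\red \to T \times_S X$ to a non-unique $\sigma: T \to T \times_S X$ via formal smoothness) and its $\CO$-module counterpart (which uses the genuine diagonal section of $X \times_S X \rightleftarrows X$). The compatibility ultimately hinges on the fact that forgetting the $\CD$-module structure corresponds to restricting attention to strict common \'etale neighbourhoods, which classify elements of $K^\et$ rather than of $G^\et$; the ambiguity in lifting $\sigma^\circ$ precisely reflects the extra infinitesimal translations present in $G^\et$ but absent in $K^\et$.
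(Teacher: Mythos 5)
Your argument is correct and takes essentially the same route as the paper, which presents the proposition as immediate from the identification of $\Rep(G)$ inside $\QCoh{\varietiesc{\infty}}$ as those sheaves whose pullback along $\pointedvarietiesc{\infty}\to\varietiesc{\infty}$ is of ind-finite type, together with the definition of $\univcatDif$. Your commutative square relating $\Res_{G^\et,K^\et}$ to the forgetful functor from universal $\CD$-modules to universal $\CO$-modules is exactly the compatibility the paper leaves implicit, and your verification of it via the construction of $\Psi$ (strict versus non-strict common \'etale neighbourhoods) is sound.
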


Combining this with our previous results (essentially, travelling to the left and then back again to the right along the middle two rows of the main diagram in Figure 1), we deduce the following:

\begin{prop}
A universal $\CO$- or $\CD$-module is of ind-finite type if and only if it is convergent. 
\end{prop}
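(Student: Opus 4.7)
The plan is to prove this by tracking both notions back through the equivalences already established, and observing that both $\univcatOconv$ and $\univcatOif$ (respectively $\univcatDconv$ and $\univcatDif$) correspond to the \emph{same} full subcategory of $\Rep(K^\et)$ (respectively $\Rep(G^\et)$), namely the image of $\Res_{K,K^\et}$ (respectively $\Res_{G,G^\et}$). Since this image is $\Rep(K)$ (respectively $\Rep(G)$) by Corollaries~\ref{cor: restriction is an embedding for K} and~\ref{cor: restriction is an embedding for G}, the conclusion follows.

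First I would handle the $\CO$-module case. On one side, by the chain of equivalences established in Section~\ref{subsec: convergent universal modules},
\[
\univcatOconv \;=\; \colim_{c \in \BN} \univcatc{c} \;\simeq\; \colim_{c \in \BN} \QCoh{\pointedvarietiesc{c}} \;\simeq\; \colim_{c \in \BN} \Rep(K^\cth),
\]
and by Proposition~\ref{prop: representations of pro-algebraic groups are continuous} the last category is $\Rep(K)$, embedded in $\Rep(K^\et)$ via restriction along $K^\et \emb K$. On the other side, by the definition of $\univcatOif$ and the discussion preceding Proposition~5.11, $\univcatOif$ corresponds under $\Psi^{-1} \circ F^{\prime *}$ to the subcategory of $\Rep(K^\et)$ whose objects are unions of finite-dimensional subrepresentations, i.e.\ $\Rep^\lf(K^\et)$; by Corollary~\ref{cor: restriction is an embedding for K}, this is also the essential image of $\Res_{K,K^\et}$.

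The key step is to verify that these two identifications agree \emph{as subcategories of $\Rep(K^\et)$}, not merely up to abstract equivalence. But this is immediate from the naturality of the equivalences: the inclusions $\univcatc{c} \hookrightarrow \univcat$ and $\Rep(K^\cth) \hookrightarrow \Rep(K^\et)$ (induced by the quotient $K^\et \surj K^\cth$) commute with the equivalences $\univcatc{c} \simeq \Rep(K^\cth)$ and $\univcat \simeq \Rep(K^\et)$, so the colimit $\univcatOconv$ lands in the same subcategory of $\univcat$ as $\Rep(K) = \colim_c \Rep(K^\cth) \subset \Rep(K^\et)$ does in $\univcat$.

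For the $\CD$-module case the argument is identical, replacing $K, K^\et, K^\cth$ by $G, G^\et, G^\cth$, and invoking Propositions~\ref{prop: representations of G are continuous} and~5.12 in place of their $K$-counterparts. Since a universal $\CD$-module is of ind-finite type if and only if its underlying universal $\CO$-module is, and analogously (by construction of $\univcatDconv$ via the stacks $\varietiesc{c}$ sitting over $\pointedvarietiesc{c}$) convergence is preserved by the forgetful functor, the two cases are in fact consistent with each other. The only real obstacle is the bookkeeping to confirm that the various equivalences fit together into a commutative diagram, which amounts to chasing the constructions in Sections~\ref{sec: universal modules} and~\ref{sec: convergent and ind-finite universal modules}; no new ideas are required.
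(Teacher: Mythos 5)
Your proposal is correct and takes essentially the same route as the paper: the paper's proof is exactly the observation that both $\univcatDconv$ (via $\colim_{c}\QCoh{\varietiesc{c}} \simeq \colim_c \Rep(G^\cth) \simeq \Rep(G)$) and $\univcatDif$ (via $\Rep^{K^\et\text{-}\lf}(G^\et)$, the essential image of $\Res_{G,G^\et}$) are the image of $\Rep(G)$ under the same equivalence $\Rep(G^\et)\simeq\univcatD$ --- ``travelling to the left and then back to the right along the middle two rows of the main diagram'' --- with the analogous chase through $K$, $K^\et$, $K^\cth$ in the $\CO$-module case. Your explicit check that the two embeddings of $\Rep(K)$ (resp.\ $\Rep(G)$) into $\Rep(K^\et)$ (resp.\ $\Rep(G^\et)$) agree as subcategories, via compatibility of the inclusions with the equivalences, is precisely the point the paper leaves implicit.
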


Our proposal is that these categories of convergent universal modules, rather than the full categories of universal modules as defined in \cite{BD1}, are the more natural categories with which to work. Of course, we do not know if in fact the categories are equivalent. 

\section{Remarks on \texorpdfstring{$\infty$-}{infinity }categories}
\label{sec: infinity categories}

In this section, we extend our results to the $\infty$-categories of representations and universal modules. In fact, none of these categories are previously well-established in the literature, so we have some freedom to choose our definitions to allow our results to extend. We will provide some justification of our choices as we proceed, but the very fact that our results extend so naturally is in itself a good defence for these definitions.

One motivation for working with $\infty$-categories rather than ordinary categories is the following. Recall that we are interested in the study of universal chiral algebras of dimension $n$, under the hypothesis that these give the correct notion of an $n$-dimensional vertex algebra. In particular, a universal chiral algebra of dimension $n$ is a universal $\CD$-module. Although the (ordinary) categories of universal $\CD$-modules and representations of $G$ may be interesting in their own right, if we wish to work with universal chiral algebras of dimension two or higher, we immediately see that it is necessary to work in the derived setting. For example, if we insist on remaining in the abelian categories, the definitions of universal chiral algebras and Lie $\star$ algebras (as in \cite{FG}) become equivalent.

\subsection{Conventions}
\label{subsec: conventions for dg categories}
Henceforth all categories of sheaves, modules, vector spaces, and $\CD$-modules will be assumed to be the $(\infty,1)$-categories, unless otherwise specified. We shall appropriate notation established earlier in the paper for abelian categories without further decoration by symbols such as ``d.g.'' or ``$\infty$''; when we wish to refer to the abelian hearts of these categories, we shall indicate it with a superscript $\heartsuit$. When we say ``category'', we mean cocomplete $(\infty,1)$-category; it is in this sense that we take colimits, for example.

\subsection{\texorpdfstring{$\infty$-}{Infinity }categories of universal modules and representations}
\label{subsec: infinity categories of universal D-modules}

We can begin by na\"ively extending the definition of a universal $\CD$-module (and similarly a universal $\CO$-module), repeating the definition \ref{defn: universal D module} in the setting of $\infty$-categories. (We will carry this out explicitly for $c$th-order $\CD$-modules in \ref{subsec: infinity category of convergent universal D-modules}.) The functors $\theta$ and $\Psi$ of Theorem \ref{thm: universal D-modules are quasi-coherent sheaves} admit $\infty$-categorical extensions, and provide an equivalence between the categories $\QCoh{\varietiesc{\infty}}$ and $\univcatD$. The equivalence between $\varietiesc{\infty}$ and $BG^\et$ is purely geometric, valid before considering categories or $\infty$-categories of sheaves, and hence we still have the equivalence
\begin{align*}
\QCoh{BG^\et} \simeq \QCoh{\varietiesc{\infty}}.
\end{align*}
In fact, apart from the first column, the entire content of the main diagram lifts immediately from  $\Cat$ to $\DGCat$ with no serious modifications. 

However, it is not immediately clear what we should take for the $\infty$-category of representations of our groups. For an algebraic group $H$, we take
\begin{align*}
\Rep(H) \defeq \QCoh{BH},
\end{align*}
as in e.g. 6.4.3, \cite{DG}. Since $BH$ is a smooth Artin stack, we can show that  
\begin{align*}
\Upsilon_{BH}: \QCoh{BH} \to \IndCoh{BH}
\end{align*}
is an equivalence of categories, so we could also have defined 
\begin{align*}
\Rep(H) = \IndCoh{BH}.
\end{align*}
Recall that the functors $\Upsilon$ intertwine the $*$-pullback on $\QCoh{\bullet}$ with the $!$-pullback on $\IndCoh{\bullet}$. 

On the other hand, given a pro-algebraic group $H$, we can consider the category $\QCoh{BH}$, but the category $\IndCoh{BH}$ is not defined, because the stack $BH$ is not locally of finite type. It can, however, be written as the limit of stacks which are locally of finite type, and this leads us to a second potential definition for the category of representations. 

More precisely, recall that if we write $H = \lim_{i} H_i$, with $H_i$ finite-dimensional quotients, and all maps $H_i \to H_j$ smooth surjections of algebraic groups, then by Proposition \ref{prop: representations of pro-algebraic groups are continuous} 
\begin{align*}
\Rep^\heartsuit(H) \simeq \colim_{i} \Rep^\heartsuit(H_i).
\end{align*}
Motivated by this fact, it is natural to consider the category
\begin{align*}
\colim_i \Rep(H_i) = \colim_i \QC^*(BH_i) \simeq \colim_i \IC^!(BH_i),
\end{align*}
and unlike in the abelian categories, this category is not all of $\QCoh{BH}$. Instead, we think of $\QCoh{BH}$ as the DG-category of representations of $H$, and $\colim_i \Rep(H_i)$ as the subcategory of representations of $H$ which are locally finite. In the $\infty$-categorical setting, this condition is not automatically satisfied, but it is one which we are happy to impose. In other words, we set
\begin{align*}
\Rep(H) \defeq \colim_i \Rep(H_i). 
\end{align*} 

Similarly, for a group formal scheme that can be written as $L = \lim_{i} L_i$ (such as the group $G$ of automorphisms of the formal disc) with $H=L_\red = \lim_i L_{i, \red}$ a pro-algebraic group, we set
\begin{align*}
\Rep(L) \defeq \colim_{i} \Rep(L_i). 
\end{align*}

Although the $L_i$ are themselves indschemes, they are of finite type, and hence $BL_i$ is locally of finite type and $\Rep(L_i)$ is given by $\QCoh{BL_i} \simeq \IndCoh{BL_i}$. 

We do not know how to define a corresponding category for an arbitrary group-valued prestack. In the case of $G^\et$ and $K^\et$, for example, the stacks $BG^\et$ and $BK^\et$ seem to be quite intractable. Fortunately the relative Artin approximation theorem and its corollaries from Section \ref{subsec: applications of relative artin approximation} allow us to approximate these stacks using the stacks $BG^\cth$ and $BH^\cth$, which are easier to work with. By restricting our attention to representations which are sufficiently finite-dimensional in flavour, we can avoid working with the stacks $BG^\et$ and $BK^\et$ entirely. The cost, however, is that the corresponding categories of representations do not correspond to the categories of arbitrary universal modules, but only those of convergent universal modules. In fact, though, we view this as an advantage rather than a cost. 

As a consequence of this discussion, we shall henceforth ignore the back two rows of the main diagram, and shall only work with the front part of the diagram, which can be described entirely using stacks which are locally of finite type. Thus far, we have the following:

\begin{center}
\begin{tikzpicture}[>=angle 90, bij/.style={above,sloped, inner sep=0.5pt}, cross line/.style={preaction={draw=white, -, line width=6pt}}] 
\matrix(a)[matrix of math nodes, row sep=2em, column sep=.8em, text height=2.5ex, text depth=0.25ex]
{\Rep(G)                                        &&                                                  & &                                   \\
\displaystyle{\colim_{c \in \BN} \Rep\left(G^\cth\right)}  && \displaystyle{\colim_{c \in \BN} \QCoh{BG^\cth}} & & \displaystyle{\colim_{c \in \BN} \QCoh{\varietiesc{c}}}        \\
\displaystyle{\Rep\left(G^\cth\right)                                   } && \displaystyle{\QCoh{BG^\cth}}                                   & & \QCoh{\varietiesc{c}}   .          \\};

\path[-, font=\scriptsize]
(a-1-1) edge[double, double distance=2pt] (a-2-1) 
(a-2-1) edge[double, double distance=2pt] (a-2-3)
(a-3-1) edge[double, double distance=2pt] (a-3-3);

\path[right hook-stealth, font=\scriptsize]
(a-3-1) edge (a-2-1)
(a-3-3) edge (a-2-3)
(a-3-5) edge (a-2-5);

\path[-stealth, bij]
(a-2-3) edge node{$\sim$} (a-2-5)
(a-3-3) edge node{$\sim$} (a-3-5);
\end{tikzpicture}
\end{center}

Recall the discussion in Remark \ref{rmk: left vs right relative D-modules} on the use of the categories $\QCoh{\bullet}$ as compared to $\IndCoh{\bullet}$. At that stage, we defended the use of $\QCoh{(X/S)_\dR}$, corresponding to relative left $\CD$-modules, rather than its more well-studied and better-behaved counterpart $\IndCoh{(X/S)_\dR}$, corresponding to relative right $\CD$-modules. Our reasons were threefold: we wished to remain consistent with the definitions of Beilinson and Drinfeld and to work with abelian categories rather than $\infty$-categories, and moreover we needed to work with prestacks which were not locally of finite type, and so we could not rely on the theory of ind-coherent sheaves. 

Indeed, all of the stacks appearing in the back rows of the main diagram from Figure 1 are of infinite type and hence not well-suited to being studied using the theory of ind-coherent sheaves---but by restricting our attention to the category of convergent universal $\CD$-modules, as we have just decided to do, we can avoid using these stacks, instead using only the stacks in the front rows of the diagram, which are locally of finite type. In particular, we can work with ind-coherent sheaves on these prestacks.

Furthermore, we argued that the correct notion of universal $\CD$-module should include the convergence condition, regardless of whether it agrees with the definition given by Beilinson and Drinfeld even in the abelian setting. In other words, our three motivations for working with quasi-coherent rather than ind-coherent sheaves have disappeared, and consequently we now feel free to use the better-behaved theory of ind-coherent sheaves in the $\infty$-categorical setting and to define an $\infty$-category which will correspond to universal right $\CD$-modules.

By taking ind-coherent sheaves rather than quasi-coherent sheaves at each stage, we obtain the ``right $\CD$-module'' version of the above diagram. However, note that each of the stacks appearing in the diagram is (equivalent to) a smooth Artin stack, so that the categories of quasi-coherent sheaves and ind-coherent sheaves are in fact equivalent via the funtors $\Upsilon$. In other words, the diagrams are actually equivalent, termwise, and the functors $\Upsilon$ between the terms intertwine the morphisms of the diagram as well. 

\subsection{\texorpdfstring{$\infty$}{Infinity}-categories of convergent universal modules}
\label{subsec: infinity category of convergent universal D-modules}
We have (equivalent) categories
\begin{align*}
\colim_{c \in \BN} \QCoh{\varietiesc{c}},\\
\colim_{c \in \BN} \IndCoh{\varietiesc{c}}
\end{align*}
which should, formally, correspond to categories of universal left and right $\CD$-modules, but which in flavour belong to the third column of the main diagram from Figure 1. In order to give an equivalent description of these categories in the language of the fourth column, of ``universal modules'', we must apply a construction analogous to that of the functor $\Psi$. We first study the $\infty$-categorical analogue of universal right $\CD$-modules of $c$th order; that is, we apply a version of the functor $\Psi$ to the category $\IndCoh{\varietiesc{c}}$. (The $\QCoh{\varietiesc{c}}$ setting is completely analogous.) We do this here only informally, as the full technical definition is no more enlightening. 

Given an object $M \in \IndCoh{\varietiesc{c}}$, we begin to argue as in the proof of Theorem \ref{thm: universal D-modules are quasi-coherent sheaves} in Section \ref{subsec: from quasi-coherent sheaves to universal D-modules} and obtain the following data:

\begin{enumerate}
\item For any $X \to S$ smooth of dimension $n$ with $S$ a scheme of finite type, we have
\begin{align*}
\SF(X/S) \in \IndCoh{(X/S)_\dR},
\end{align*}
given by the compatible family
\begin{align*}
\left\{ \SF(X/S)_{T \to (X/S)_\dR} \defeq M_{T \times_S X \rightleftarrows T} \in \IndCoh{T} \right\}_{T \in \Sch_{/(X/S)_\dR}}  .
\end{align*}
\item For any any pair $(X/S), (X^\prime,S^\prime)$ of smooth $n$-dimensional families, and for any fibrewise \'etale morphism $f:(X/S) \to (X^\prime / S^\prime)$, an isomorphism
\begin{align*}
\SF(f): \SF_{X/S} \EquivTo f_{X/S}^! \SF_{X^\prime/S^\prime}
\end{align*}
in $\IndCoh{(X/S)_\dR}$, defined for each $T$-point $T\to (X/S)_\dR$ to be equal to the compatibility isomorphism
\begin{align*}
M_{T \times_S X \rightleftarrows T} \EquivTo M_{T \times_{S^\prime} X^\prime \rightleftarrows T} \in \IndCoh{T}.
\end{align*}
The fact that $M$ depends only on morphisms in $\varietiesc{c}(T)$ (as compared to in $\varietiesc{\infty}(T)$) tells us that for any sections $\sigma: S \to X$, $\sigma: S^\prime \to X^\prime$ compatible with $f$ on the level of $S^\prime_\red$, the map
\begin{align*}
\overline{\sigma}^!\SF(f): \overline{\sigma}^! \SF(X/S) \EquivTo \overline{\sigma^\prime}^! \SF(X^\prime/S^\prime)
\end{align*}
depends only on the restriction of $f$ to the $c$th infinitesimal neighbourhood of $S \emb X$. 
\end{enumerate}

In Section \ref{subsec: from quasi-coherent sheaves to universal D-modules} we then showed that the isomorphisms $\SF(f)$ were compatible with composition. Since we are now working in $\infty$-categories, this compatibility is now a structure rather than a condition, and so we obtain additional data. The first few stages look like this: 
\begin{enumerate}[resume]
\item Given three smooth families with fibrewise maps between them
\begin{align*}
(X/S) \xrightarrow{f} (X^\prime /S^\prime) \xrightarrow{g} (X^{\prime\prime}/S^{\prime\prime}),
\end{align*}
we have a natural isomorphism
\begin{align*}
a_{f,g}: f_{X/S}^! \SF(g) \circ \SF(f) \Rightarrow \SF(g \circ f)
\end{align*}
of isomorphisms $\SF(X/S) \to (g \circ f)^! \SF(X^{\prime\prime}/S^{\prime\prime})$. (Note that we have omitted from our notation the canonical isomorphism $(g \circ f)^! \Rightarrow f^! \circ g^!$.) 

This natural isomorphism is defined for each $T$-point $T \to (X/S)_\dR$ to be the natural transformation between the two maps
\begin{align*}
M_{T \times_S X \rightleftarrows T} \to M_{T \times_{S^\prime} X^\prime \rightleftarrows T} \to M_{T \times_{S^{\prime\prime}} X^{\prime\prime} \rightleftarrows T}
\end{align*}
and 
\begin{align*}
M_{T \times_S X \rightleftarrows T} \to M_{T \times_{S^{\prime\prime}} X^{\prime\prime} \rightleftarrows T},
\end{align*}
which comes from the structure of $M$ as an object of $\IndCoh{\varietiesc{c}}$.

\item Given four smooth families with fibrewise maps between them 
\begin{align*}
(X/S) \xrightarrow{f} (X^\prime/S^\prime) \xrightarrow{g} (X^{\prime\prime}/S^{\prime\prime}) \xrightarrow{h} (X^{\prime\prime\prime} /S^{\prime\prime\prime}), 
\end{align*}
the data of (3) gives us two natural isomorphisms between the maps
\begin{align*}
f^! g^! \SF(h) \circ f^! \SF(g) \circ \SF(f) \text{ and }\SF(h \circ g \circ f)
\end{align*}
as follows:

\begin{center}
\begin{tikzpicture}[>=latex]
\matrix(b)[matrix of math nodes, row sep=2em, column sep=2em, text height=1.5ex, text depth=0.25ex]
{ f^! g^! \SF(h) \circ f^! \SF(g) \circ \SF(f) & f^! g^! \SF(h) \circ \SF(g \circ f) \\
  f^! \SF(h \circ g) \circ \SF(f)              & \SF(h \circ g \circ f). \\};

\path[-implies, font=\scriptsize]
(b-1-1) edge[double equal sign distance] node[above]{$a_{f,g}$} (b-1-2)
(b-1-2) edge[double equal sign distance] node[right]{$a_{g \circ f, h}$} (b-2-2)
(b-1-1) edge[double equal sign distance] node[left]{$a_{g,h}$} (b-2-1)
(b-2-1) edge[double equal sign distance] node[below]{$a_{f, h\circ g}$} (b-2-2);

\end{tikzpicture}
\end{center}

There is a 3-morphism $b_{f,g,h}$ making this diagram commute.
\item \ldots and so on \ldots
\end{enumerate}

\begin{defn}
A collection $\SF=\left(\{\SF(X/S)\}, \{\SF(f)\}, \{a_{f, g}\}, \{b_{f,g,h}\}, \ldots \right)$ as above will be called a \emph{universal right $\CD$-module of $c$th order}. Such objects form an $\infty$-category ${}^r\univcatDc{c}$, equivalent by construction to the category $\IndCoh{\varietiesc{c}}$.   
\end{defn}

\begin{defn}
The $\infty$-category of \emph{convergent universal right $\CD$-modules} is by definition the colimit
\begin{align*}
{}^r\univcatDconv \defeq \colim_{c \in \BN} {}^r\univcatDc{c}.
\end{align*}
\end{defn}

We think of an object of ${}^r\univcatDconv$ as a family 
\begin{align*}
\SF = \left(\{\SF(X/S)\}, \{\SF(f)\}, \{a_{f, g}\}, \{b_{f,g,h}\}, \ldots \right)
\end{align*}
as in the above description, except with the condition in (2) pertaining to $(c)$-equivalence for morphisms omitted; instead, we assume that $\SF$ has an exhaustive filtration by subobjects $\SF^\cth$, where for each $c$, $\SF^\cth$ does satisfy the condition in (2). 

\begin{rmk}
Of course, it is impossible to specify such an object completely in this manner. However, this description in terms of families of sheaves has a significantly more geometric feel than that of the category of representations of $G$. This is a particular aspect of the difference between the study of vertex algebras (living on the same side of the story as $\Rep(G)$) and the study of chiral algebras (living on the geometric side).
\end{rmk}

We have, immediately, the following diagram (with equivalences along the rows):
\begin{center}
\begin{tiny}
\begin{tikzpicture}[>=angle 90, bij/.style={above,sloped, inner sep=0.5pt}, cross line/.style={preaction={draw=white, -, line width=6pt}}] 
\matrix(a)[matrix of math nodes, row sep=2em, column sep=1em, text height=2.5ex, text depth=0.25ex]
{\Rep(G) &&  && && {}^r\univcatDconv \\
\displaystyle{\colim_{c \in \BN} \Rep\left(G^\cth\right)}  && \displaystyle{\colim_{c \in \BN} \IndCoh{BG^\cth}} & & \displaystyle{\colim_{c \in \BN} \IndCoh{\varietiesc{c}}} && \displaystyle{\colim_{c \in \BN} {}^r\univcatDc{c}}   \\
\displaystyle{\Rep\left(G^\cth\right) } && \displaystyle{\IndCoh{BG^\cth}}  & & \IndCoh{\varietiesc{c}} && {}^r\univcatDc{c}. \\};

\path[-, font=\scriptsize]
(a-1-1) edge[double, double distance=2pt] (a-2-1) 
(a-2-1) edge[double, double distance=2pt] (a-2-3)
(a-3-1) edge[double, double distance=2pt] (a-3-3)
(a-1-7) edge[double, double distance=2pt] (a-2-7);

\path[right hook-stealth, font=\scriptsize]
(a-3-1) edge (a-2-1)
(a-3-3) edge (a-2-3)
(a-3-5) edge (a-2-5)
(a-3-7) edge (a-2-7);

\path[-stealth, bij]
(a-2-3) edge (a-2-5)
(a-3-3) edge (a-3-5)
(a-2-5) edge (a-2-7)
(a-3-5) edge (a-3-7)
(a-1-1) edge[dashed] (a-1-7);

\end{tikzpicture}
\end{tiny}
\end{center}

We have a completely analogous diagram for convergent universal left $\CD$-modules, using quasi-coherent sheaves. Because all of the categories in the first three columns are equivalent whether we use $\IndCoh{\bullet}$ or $\QCoh{\bullet}$, we deduce that the categories of universal right and left $\CD$-modules are equivalent as well. 

This is somewhat surprising: we do not have, in general, that the categories $\QCoh{(X/S)_\dR}$ and $\IndCoh{(X/S)_\dR}$ are equivalent. We can only say that 
\begin{align*}
\Upsilon_{(X/S)_\dR}: \QCoh{(X/S)_\dR} \to \IndCoh{(X/S)_\dR} 
\end{align*}
is a fully faithful embedding. However, as a consequence of the convergence condition, we can see that any universal family 
\begin{align*}
(X/S) \mapsto \SF(X/S) \in \IndCoh{(X/S)_\dR}
\end{align*}
will actually take values in the essential image of these functors $\Upsilon$. Consequently, the categories of convergent universal right and left $\CD$-modules are in fact canonically equivalent. 

A final remark on the convergence condition is the following: even in the right $\CD$-modules setting, where we do work with ind-coherent sheaves,  we still do not obtain a category whose objects are all compatible families of ind-coherent sheaves (or relative right $\CD$-modules) indexed by smooth $n$-dimensional families $X/S$. Instead, what we obtain is, informally, closer to the ind-completion of a category of universal families of coherent sheaves. This is close in spirit to the description of convergent universal $\CD$-modules as ind-finite families of modules from Section \ref{subsec: ind-finite universal D-modules}.

\setcounter{section}{0}
\renewcommand{\thesection}{\Alph{section}}
\section{Appendix: Proof of compatibility of \texorpdfstring{$\theta$}{theta} with composition}
\label{appendix}
In this section, we show that for $\SF \in \univcatD$ the assignment $(f, \alpha) \mapsto \theta(\SF)(f, \alpha)$ is compatible with composition. 

Suppose that we have a commutative diagram as follows
\begin{center}
\begin{tikzpicture}[>=angle 90]
\matrix(e)[matrix of math nodes, row sep=.8em, column sep=3em, text height=1.5ex, text depth=0.25ex]
{ S_3 &    &  \\
      & {} & \\
  S_2 & {} & \prestackVbc{\infty} \\
      & {} &  \\
  S_1 &    &  \\};
\path[->, font=\scriptsize]
 (e-5-1) edge node[left]{$f_1$} (e-3-1)
 (e-3-1) edge node[left]{$f_2$} (e-1-1)
 (e-1-1) edge node[above, sloped]{$(\pi_3,\sigma_3)$} (e-3-3)
 (e-3-1) edge node[below=-3pt]{$(\pi_2, \sigma_2)$} (e-3-3)
 (e-5-1) edge node[below, sloped]{$(\pi_1, \sigma_1)$} (e-3-3);  
 
\path[-implies, font=\scriptsize]
 (e-3-2) edge[double equal sign distance] node[right]{$\ \beta$} (e-1-1)
 (e-4-2) edge[double equal sign distance] node[right]{$\ \alpha$} (e-3-1);
\end{tikzpicture}
\end{center}
with the commutativity of the diagram given by morphisms $\alpha$ and $\beta$ represented by common \'etale neighbourhoods $(V_\alpha, \phi_\alpha, \psi_\alpha)$ between $X_1/S_1$ and $(S_1 \times_{S_2} X_2)/S_1$, and $(W_\beta, \phi_\beta, \psi_\beta)$ between $X_2/S_2$ and $(S_2 \times_{S_3} X_3)/S_2$. Then the commutativity of the large triangle in the diagram is given by the morphism $f_1^* \beta \circ \alpha$ represented by the pullback of the common \'etale neighbourhoods:
\begin{align*}
\left(V_\alpha \times_{\left(S_1 \times_{S_2} X_2\right)} \left(S_1 \times_{S_2} W_\beta\right), \phi_\alpha \circ \pr_{V_\alpha}, f_1^*\psi_\beta \circ \pr_{S_1 \times_{S_2} W_\beta} \right) \\
 = \left(V_\alpha \times_{X_2} W_\beta, \phi_\alpha \circ \pr_{V_\alpha}, \rho_\alpha \times \psi_\beta \right).
\end{align*}

We wish to show that 
\begin{align}\label{goal}
\theta(\SF)(f_2 \circ f_1, f_1^*\beta \circ \alpha) = \theta(\SF)(f_1, \alpha) \circ f_1^*\theta(\SF)(f_2, \beta).
\end{align}

From the definition of $f_1^* \beta \circ \alpha$, we have that
\begin{multline}\label{step 1}
\theta(\SF)(f_2 \circ f_1, f_1^*\beta \circ \alpha) \\
= \overline{\tau_{f_1^* \beta \circ \alpha}}^* \left( \SF(\phi_\alpha \circ \pr_{V_\alpha}, \id_{S_1}) \circ \SF(\pr_{X_3} \circ \psi_\beta \circ \pr_{W_\beta}, f_2 \circ f_1)^{-1} \right).
\end{multline}

Applying the compatibility of $\SF(\bullet)$ with composition of fibrewise morphisms, we can rewrite (\ref{step 1}) as follows:

\begin{multline}\label{step 2}
\overline{\tau_{f_1^* \beta \circ \alpha}}^* \left( (\pr_{V_\alpha}, \id_{S_1})_{X/S}^* \SF(\phi_\alpha, \id_{S_1}) \circ \SF(\pr_{V_\alpha}, \id_{S_1}) \circ \SF(\pr_{W_\beta}, f_1)^{-1} \right. \\ \left. \circ (\pr_{W_\beta}, f_1)_{X/S}^* \SF( \psi_\beta, \id_{S_2} )^{-1} \circ (\pr_{W_\beta}, f_1)_{X/S}^* (\psi_\beta, \id_{S_2} )_{X/S}^* \SF(\pr_{X_3}, f_2)^{-1} \right).
\end{multline}

Next we note the following equalities:
\begin{enumerate}[label=(\alph*)]
\item $ (\pr_{V_\alpha}, \id_{S_1})_{X/S} \circ \overline{\tau_{f_1^* \beta \circ \alpha}} = \overline{\tau_\alpha} $;
\item $ (\pr_{W_\beta}, f_1)_{X/S} \circ \overline{\tau_{f_1^* \beta \circ \alpha}} = \overline{\tau_\beta} \circ f_1 $;
\item $ (\psi_{\beta}, \id_{S_2})_{X/S} \circ \overline{\tau_\beta} = \overline{f_2^*\sigma_3}$;
\item $ (\psi_\alpha, \id_{S_1})_{X/S} \circ \overline{\tau_\alpha} = \overline{f_1^*\sigma_2}$.
\end{enumerate}

We will use these repeatedly in the remainder of this section. For example, using (a),(b),(c), we can rewrite (\ref{step 2}) in the following way:
\begin{multline}\label{step 3}
\overline{\tau_\alpha}^* \SF(\phi_\alpha, \id_{S_1}) \circ \overline{\tau_{f_1^* \beta \circ \alpha}}^* \SF(\pr_{V_\alpha}, \id_{S_1}) \\ \circ \overline{\tau_{f_1^* \beta \circ \alpha}}^* \SF(\pr_{W_\beta}, f_1)^{-1} \circ f_1^* \overline{\tau_\beta}^* \SF(\psi_\beta, \id_{S_2})^{-1} \circ f_1^* \overline{f_2^*\sigma_3}^* \SF(\pr_{X_3}, f_2)^{-1}.
\end{multline}

On the other hand, the right hand side of (\ref{goal}) is given by
\begin{multline}\label{step 4}
\theta(\SF)(f_1, \alpha) \circ f_1^* \theta(\SF)(f_2, \beta) \\
= \left( \overline{\tau_\alpha}^* \left( \SF(\phi_\alpha, \id_{S_1}) \circ \SF(\pr_{X_2} \circ \psi_\alpha, f_1)^{-1} \right) \right) \\ \circ f_1^* \overline{\tau_\beta}^* \left(\SF(\phi_\beta, \id_{S_2}) \circ \SF(\pr_{X_3} \circ \psi_\beta, f_2)^{-1} \right).
\end{multline}

We expand this using the compatibility of $\SF(\bullet)$ with composition, and use the equalities (c) and (d) to obtain
\begin{multline}\label{step 5}
\overline{\tau_\alpha}^* \SF(\phi_\alpha, \id_{S_1}) \circ \overline{\tau_\alpha}^* \SF(\psi_\alpha, \id_{S_1})^{-1} \circ \overline{f_1^* \sigma_2}^* \SF(\pr_{X_2}, f_1)^{-1} \\ \circ f_1^* \overline{\tau_\beta}^* \SF(\phi_\beta, \id_{S_2}) \circ f_1^* \overline{\tau_\beta}^* \SF(\psi_\beta, \id_{S_2})^{-1} \circ f_1^* \overline{f_2^* \sigma_3}^* \SF(\pr_{X_3}, f_2)^{-1}.
\end{multline}

Comparing (\ref{step 3}) and (\ref{step 5}) we see that to prove the desired equality (\ref{goal}), it suffices to show that
\begin{multline*}
\overline{\tau_\alpha}^* \SF(\psi_\alpha, \id_{S_1})^{-1} \circ \overline{f_1^* \sigma_2}^* \SF(\pr_{X_2}, f_1)^{-1} \circ f_1^* \overline{\tau_\beta}^* \SF(\phi_\beta, \id_{S_2}) \\ = \overline{\tau_{f_1^* \beta \circ \alpha}}^* \SF(\pr_{V_\alpha}, \id_{S_1}) \circ \overline{\tau_{f_1^* \beta \circ \alpha}}^* \SF(\pr_{W_\beta}, f_1)^{-1},
\end{multline*}
or equivalently that
\begin{multline}\label{goal 2}
\overline{f_1^* \sigma_2}^* \SF(\pr_{X_2}, f_1) \circ \overline{\tau_\alpha}^* \SF(\psi_\alpha, \id_{S_1}) \circ \overline{\tau_{f_1^* \beta \circ \alpha}}^* \SF(\pr_{V_\alpha}, \id_{S_1}) \\ = f_1^* \overline{\tau_\beta}^* \SF(\phi_\beta, \id_{S_2}) \circ \overline{\tau_{f_1^* \beta \circ \alpha}}^* \SF(\pr_{W_\beta}, f_1).
\end{multline}

Using (b), the right hand side of (\ref{goal 2}) becomes 
\begin{multline}\label{step 6}
\overline{\tau_{f_1^* \beta \circ \alpha}}^* \left( (\pr_{W_\beta}, f_1)_{X/S}^* \SF(\phi_\beta, \id_{S_1}) \circ \SF(\pr_{W_\beta}, f_1) \right) \\ = \overline{\tau_{f_1^* \beta \circ \alpha}}^* \SF(\phi_\beta \circ \pr_{W_\beta}, f_1).
\end{multline}
Meanwhile, using (d) and (a), the left hand side of (\ref{goal 2}) can be written as
\begin{multline}\label{step 7}
\overline{\tau_{f_1^* \beta \circ \alpha}}^* \left( (\psi_\alpha, id_{S_1})_{X/S}^* (\pr_{V_\alpha}, \id_{S_1})^* \SF(\pr_{X_2},f_1) \right. \\ \left. \circ (\pr_{V_\alpha}, \id_{S_1})^* \SF(\psi_\alpha, \id_{S_1}) \circ \SF(\pr_{V_\alpha}, \id_{S_1}) \right).
\end{multline}

Using once again the compatibility of $\SF(\bullet)$ with composition, we see that this is equal to 
\begin{align}\label{step 8}
\overline{\tau_{f_1^* \beta \circ \alpha}}^* \SF( \pr_{X_2} \circ \psi_\alpha \circ \pr_{V_\alpha}, f_1).
\end{align}

Since $\pr_{X_2} \circ \psi_\alpha \circ \pr_{V_\alpha} = \phi_\beta \circ \pr_{W_\beta}$, the expression in (\ref{step 6}) is equal to the expression in (\ref{step 8}) and so the proof is complete. 

\section{Appendix: The main diagram}
\label{appendix: main diagram}
We include on the next page an extra copy of the main diagram. The reader may wish to cut it out for ease of reference.
\begin{figure*}
\centering
\includegraphics[angle=90, height=7.5in]{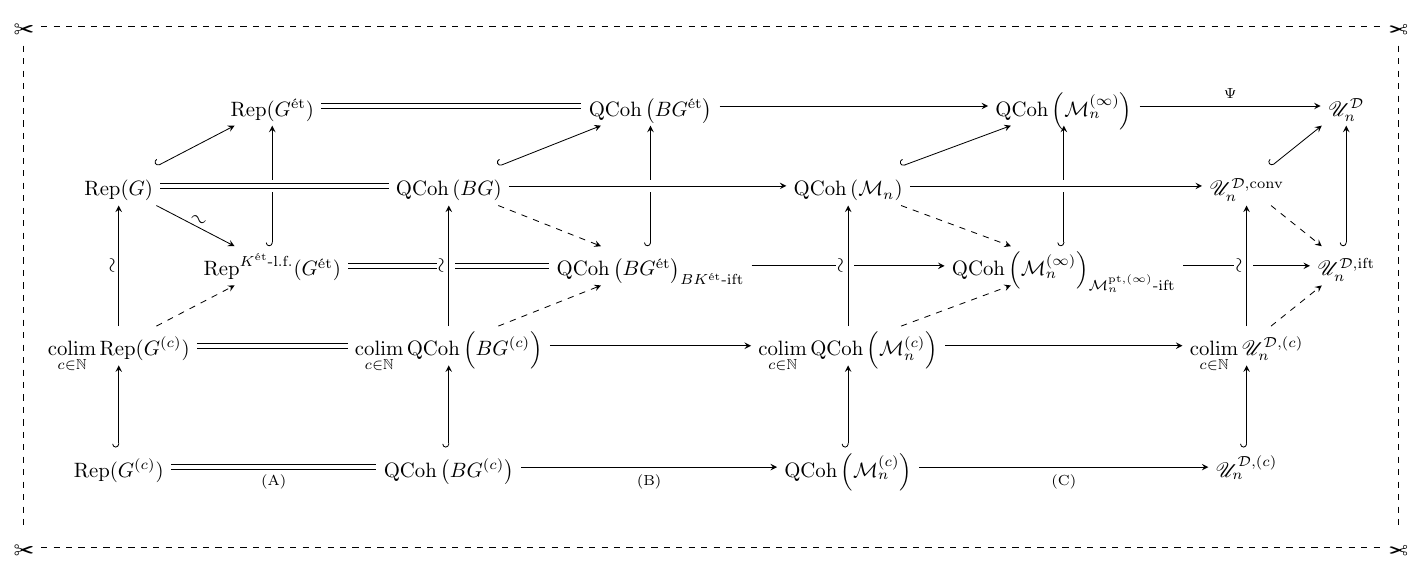}
\end{figure*}

\bibliographystyle{spmpsci}
\bibliography{bibliography-EJM}

\end{document}